\providecommand{\U}[1]{\protect\rule{.1in}{.1in}}
\newtheorem{theorem}{Theorem}
\theoremstyle{plain}
\newtheorem{conjecture}{Conjecture}
\newtheorem{corollary}{Corollary}
\newtheorem{definition}{Definition}
\newtheorem{example}{Example}
\newtheorem{proposition}{Proposition}
\newtheorem{remark}{Remark}
\numberwithin{equation}{section}
\begin{document}
\title{Morphisms of $A_{\infty}$-Bialgebras and Applications }
\author{Samson Saneblidze$^{1}$}
\email{sane@rmi.ge}
\address{A. Razmadze Mathematical Institute\\
Department of Geometry and Topology \\
M. Aleksidze St. 1 \\
0193 Tbilisi, Georgia}
\author{Ronald Umble$^{2}$}
\email{ron.umble@millersville.edu}
\address{Department of Mathematics\\
Millersville University of Pennsylvania\\
Millersville, PA. 17551}
\thanks{$^{1} $ The research described in this publication was made possible in part
by Award No. GM1-2083 of the U.S. Civilian Research and Development Foundation
for the Independent States of the Former Soviet Union (CRDF) and by Award No.
99-00817 of INTAS}
\thanks{$^{2}$ This research was funded in part by a Millersville University faculty
research grant.}
\keywords{$A_{\infty}$-bialgebra, biassociahedron, bimultiplihedron, loop space, matrad,
operad, permutahedron, relative matrad }

\begin{abstract}
We define the notion of a \emph{relative matrad and }realize the free relative
matrad $r\mathcal{H}_{\infty}$ as a free $\mathcal{H}_{\infty}$-bimodule
structure on cellular chains of \emph{bimultiplihedra} $JJ=\{JJ_{n,m}
=JJ_{m,n}\}_{m,n\geq1}.$ We define a \emph{morphism} $G:A\Rightarrow B$ of
$A_{\infty}$-bialgebras as a bimodule over $\mathcal{H}_{\infty}$. \ We  prove that the homology of every
$A_{\infty}$-bialgebra over a commutative ring with unity admits an induced
$A_{\infty}$-bialgebra structure. We extend the Bott-Samelson isomorphism to
an isomorphism of $A_{\infty}$-bialgebras and identify the $A_{\infty}
$-bialgebra structure of $H_{\ast}\left(  \Omega\Sigma X;\mathbb{Q}\right)  $.
For each $n\geq2$, we construct a space $X_{n}$ and
identify an induced nontrivial $A_{\infty}$-bialgebra operation $\omega_{2}^{n}:
H^{\ast}\left(  \Omega X_{n};\mathbb{Z}_{2}\right)  ^{\otimes2}\rightarrow H^{\ast
}\left(  \Omega X_{n};\mathbb{Z}_{2}\right)  ^{\otimes n}$.

\end{abstract}

\date{September 27, 2012}



\maketitle

\section{Introduction}

This paper assumes familiarity with our prequels \cite{SU3} and \cite{SU4}, in which we defined the objects in the category of
$A_{\infty}$-bialgebras; in this paper we define the morphisms. Let $R$ be
a (graded or ungraded) commutative ring with unity, let $H$ be a graded
$R$-module, let $M=\left\{  M_{n,m}=\right.  $ $\left.  Hom\left(  H^{\otimes
m},H^{\otimes n}\right)  \right\}  $, and consider the (non-bilinear) product
\[
\circledcirc:M\times M\overset{d\times d}{\rightarrow}\mathbf{M\times
M}\overset{\Upsilon}{\mathbf{\rightarrow}}\mathbf{M}\overset{\operatorname*{proj}}{\mathbf{\rightarrow}}M,
\]
where $\mathbf{M}\subset TTM$ (the double tensor module of $M$) is the bisequence submodule, $d:M\rightarrow
\mathbf{M}$ is the biderivative, and $\Upsilon:\mathbf{M\times M\rightarrow
M}$ is the canonical product. A family of maps
\[
\omega=\left\{  \omega_{m}^{n}\in M_{n,m}\mid\left\vert \omega_{m}
^{n}\right\vert =m+n-3\right\}  _{m,n\geq1}\]
defines an $A_{\infty}$-bialgebra structure on $H$ whenever $\omega
\circledcirc\omega=0$; the structure relations are the bihomogeneous
components of this equation.  The structure of an $A_{\infty}$-bialgebra
is controlled by a family of contractible polytopes $KK=\left\{KK_{n,m}\right.$
$\left.=KK_{m,n}\right\} $ called biassociahedra, of which $KK_{n,m}$
is $\left(m+n-3\right)  $-dimensional with a single top dimensional cell; in particular,
$KK_{1,n}$ is the associahedron $K_{n}$. The cellular chains $C_{\ast}\left(
KK\right)$ realize the free matrad $\mathcal{H}_{\infty}$, and there is a
canonical matrad structure on $Hom\left(TH,TH\right)$. Thus $\left(  H,\omega\right)  $
is an $A_{\infty}$-bialgebra whenever there is a map of matrads
$C_{\ast}\left(  KK\right)  \rightarrow Hom\left(  TH,TH\right)  $
whose restriction to top dimensional cells has image $\omega$.
When this occurs we say that $\left(  H,\omega\right)  $ is an
algebra over $\mathcal{H}_{\infty}$.

The structure of an $A_{\infty}$-bialgebra morphism is controlled by a
family of contractible polytopes $JJ=\{JJ_{n,m}=JJ_{m,n}\}_{m,n\geq1}$
called \emph{bimultiplihedra,} of which $JJ_{n,m}$ is a subdivision of
$KK_{n,m}\times I$ with a single top dimensional cell;
in particular, $JJ_{1,n}$ is the multiplihedron $J_{n}.$ The $\mathcal{A}_{\infty}
$-bimodule structure on cellular chains $C_{\ast}\left(  J\right)  $ extends
to an $\mathcal{H}_{\infty}$-bimodule structure on $C_{\ast}\left(  JJ\right)
$ and realizes the free \emph{relative matrad} $r\mathcal{H}_{\infty}$. Given
$A_{\infty}$-bialgebras $A$ and $B,$ there is a canonical $\mathcal{E}nd_{TB}
$-$\mathcal{E}nd_{TA}$-bimodule structure on $Hom(TA,TB),$ which is also a
canonical relative matrad structure. Thus a family of maps \[G=\left\{
g_{m}^{n}\in Hom\left(  A^{\otimes m},B^{\otimes n}\right)
\mid \left\vert g_{m}^{n}\right\vert =m+n-2\right\} \] defines
an $A_{\infty}$-\emph{bialgebra morphism} $A\Rightarrow B$ whenever
there is a map of relative matrads $C_{\ast}\left(  JJ\right)  \rightarrow
Hom(TA,TB)$ whose restriction to top dimensional cells
has image $G$.  When this occurs we say that $\left(  A,B,G\right)$ is a
bimodule over $\mathcal{H}_{\infty}$.

Given an $A_{\infty}$-bialgebra $B$ whose homology $H_*(B)$ is a free $R$-module, and a homology isomorphism $g:H_{\ast}\left(  B\right)  \rightarrow B$, we prove that the $A_{\infty}$-bialgebra structure on $B$ pulls back along $g$ to an $A_{\infty}$-bialgebra structure on $H_{\ast}\left(  B\right)$, and any two such structures so obtained are isomorphic. This is a special case of our main result:\smallskip

\noindent\textbf{Theorem} \ref{AAHopf}. \textit{Let }$B$\textit{\ be an
}$A_{\infty}$\textit{-bialgebra with homology }$H=H_{\ast}\left(  B\right)
$\textit{, let }$\left(  RH,d\right)  $\textit{\ be a free }$R$\textit{-module
resolution of }$H,$ \textit{and let }$h$\textit{\ be} \textit{a perturbation
of }$d$\textit{\ such that }$g:\left(  RH,d+h\right)  \rightarrow\left(
B,d_{B}\right)  $\textit{\ is a homology isomorphism. Then}

\begin{enumerate}
\item[\textit{(i)}] (Existence) $g$\textit{\ induces an }$A_{\infty}
$\textit{-bialgebra structure }$\omega_{RH}$\textit{\ on }$RH$\textit{\ and
extends to a map }$G:RH\Rightarrow B$\textit{\ of }$A_{\infty}$
\textit{-bialgebras.}

\item[\textit{(ii)}] (Uniqueness) $\left(  \omega_{RH},G\right)  $\textit{\ is
unique up to isomorphism.}\smallskip
\end{enumerate}

Whereas an $A_\infty$-coalgebra is an $A_\infty$-bialgebra whose nontrivial operations are exclusively $A_\infty$-coalgebra operations, Theorem \ref{AAHopf} achieves the same result as the Coalgebra Perturbation Lemma (CPL) whenever it applies (see \cite{G-L-S2}, \cite{Hueb-Kade}, \cite{Markl1}).  However, Theorem \ref{AAHopf} applies to a general $A_\infty$-bialgebra $B$, and our proof, which follows a geometrical construction, does not assume that the map $g$ has a right-homotopy inverse (without which the CPL cannot be formulated).  Indeed, if $A$ is a free $R$-module, $B$ is an $A_\infty$-structure, and $g:A\to B$ is a homology isomorphism, Theorem \ref{AAHopf} transfers the $A_\infty$-structure from $B$ to $A$ along $g$ \emph{whenever} (1) the $A_\infty$-structure on $B$ is controlled by a family $\mathcal{P}$ of contractible polytopes, each having a single top dimensional cell, and (2) the morphisms of $A_\infty$-structures are controlled by an appropriate subdivision of $\mathcal{P}\times I.$   For some remarks on the history of perturbation theory see \cite{Huebschmann} and \cite{Hueb-Kade}. We only note that, in the particular case of the de Rham complex, K.T. Chen \cite{Chen} constructed the requisite perturbation by hand (also see \cite{G-L-S}).

Several applications of Theorem \ref{AAHopf} appear in Section 7. Given a topological space $X$ and a field $F,$ the
bialgebra structure of simplicial singular chains $S_{\ast}\left(  \Omega
X;F\right)  $ of Moore base pointed loops induces an $A_{\infty}$-bialgebra
structure on $H_{\ast}\left(  \Omega X;F\right)  $ and the $A_{\infty}
$-(co)algebra substructures are exactly the ones observed earlier by Gugenheim
\cite{Gugenheim} and Kadeishvili \cite{Kadeishvili1}.
 Furthermore, the
$A_{\infty}$-coalgebra structure on ${H}_{\ast}(X;F)$ extends to an
$A_{\infty}$-bialgebra structure on the tensor algebra $T^{a}\tilde{H}_{\ast
}(X;F)$, which is trivial if and only if the $A_{\infty}$-coalgebra structure
on ${H}_{\ast}(X;F)$ is trivial. The Bott-Samelson isomorphism
$t_{\ast}:T^{a}\tilde{H}_{\ast}(X;F)\overset{\approx}{\rightarrow}H_{\ast
}(\Omega\Sigma X;F)$ extends to an isomorphism of $A_{\infty}$-bialgebras
(Theorem \ref{B-S}). Indeed, the $A_{\infty}$-bialgebra structure of $H_{\ast}
(\Omega\Sigma X;\mathbb{Q})$ provides the first
nontrivial rational homology invariant for $\Omega\Sigma X$ (Corollary
\ref{invariant}). Finally, for each $n\geq2,$ we construct a space $X_{n}$ and
identify a nontrivial $A_{\infty}$-bialgebra operation $\omega_{2}^{n}
:H^{\ast}(\Omega X_{n};\mathbb{Z}_{2})^{\otimes2}\rightarrow H^{\ast}(\Omega
X_{n};\mathbb{Z}_{2})^{\otimes n}$ defined in terms of the action of the
Steenrod algebra $\mathcal{A}_{2}$ on $H^{\ast}(X_{n};\mathbb{Z}_{2}).$

The paper is organized as follows: Section 2 reviews the theory of matrads and
the related notation used throughout the paper; see \cite{SU4} for details. We
construct relative matrads in Section 3 and the bimultiplihedra in Section 4.
We define morphisms of $A_{\infty}$-bialgebras in Section 5. We prove our main
result (Theorem \ref{AAHopf}) in Section 6 and conclude with various applications
and examples in Section 7.

\section{Matrads}

Let $M=\left\{  M_{n,m}\right\}  _{m,n\geq1}$ be a bigraded module over a
(graded or ungraded) commutative ring $R$ with unity $1_{R}$. Each pair of
matrices $X=\left[  x_{ij}\right]  ,Y=\left[  y_{ij}\right]  \in
\mathbb{N}^{q\times p},$ $p,q\geq1,$ uniquely determines a submodule
\[
{M}_{Y,X}=\left(  M_{y_{11},x_{11}}\otimes\cdots\otimes M_{y_{1p},x_{1p}
}\right)  \otimes\cdots\otimes\left(  M_{y_{q1},x_{q1}}\otimes\cdots\otimes
M_{y_{qp},x_{qp}}\right)  \subset TTM.
\]
Fix a set of bihomogeneous module generators $G\subset{M}$. A \emph{monomial
in} $TM$ is an element of $G^{\otimes p}$, and a \emph{monomial in} $TTM$ is an
element of $\left(  G^{\otimes p}\right)  ^{\otimes q}.$ Thus $A\in\left(
G^{\otimes p}\right)  ^{\otimes q}$ is naturally represented by the $q\times
p$ matrix
\[
A=\left[
\begin{array}
[c]{ccc}
\alpha_{x_{1,1}}^{y_{1,1}} & \cdots & \alpha_{x_{1,p}}^{y_{1,p}}\\
\vdots &  & \vdots\\
\alpha_{x_{q,1}}^{y_{q,1}} & \cdots & \alpha_{x_{q,p}}^{y_{q,p}}
\end{array}
\right]
\]
with entries in $G$, and rows identified with elements of $G^{\otimes p}.$ We
refer to $A$ as a $q\times p$ \emph{monomial}, and to the submodules
\[
\overline{\mathbf{M}}=\bigoplus_{\substack{X,Y\in\mathbb{N}^{q\times
p}\\p,q\geq1}}M_{Y,X}\text{ \ and \ }\overline{\mathbf{V}}=\bigoplus
_{\substack{X,Y\in\mathbb{N}^{1\times p}\cup\mathbb{N}^{q\times1}\text{
}\\p,q\geq1}}M_{Y,X}
\]
as the \emph{matrix }and\emph{\ vector }submodules of\emph{\ }$TTM,$
respectively. The matrix transpose $A\mapsto A^{T}$ induces the canonical
permutation of tensor factors $\sigma_{p,q}:\left(  M^{\otimes p}\right)
^{\otimes q}\overset{\approx}{\rightarrow}\left(  M^{\otimes q}\right)
^{\otimes p}$ given by
\[
\left(  \alpha_{x_{1,1}}^{y_{1,1}}\otimes\cdots\otimes\alpha_{x_{1,p}
}^{y_{1,p}}\right)  \otimes\cdots\otimes\left(  \alpha_{x_{q,1}}^{y_{q,1}
}\otimes\cdots\otimes\alpha_{x_{q,p}}^{y_{q,p}}\right)  \mapsto
\]
\[
\left(  \alpha_{x_{1,1}}^{y_{1,1}}\otimes\cdots\otimes\alpha_{x_{q,1}
}^{y_{q,1}}\right)  \otimes\cdots\otimes\left(  \alpha_{x_{1,p}}^{y_{1,p}
}\otimes\cdots\otimes\alpha_{x_{q,p}}^{y_{q,p}}\right)  .
\]

Let $\bar{X} \in \mathbb{N}^{q\times p}$ be a matrix with
constant columns $\left(  x_{i}\right)  ^{T}$; let $\bar{Y} \in \mathbb{N}^{q\times p}$ be a matrix with constant rows $\left(
y_{j}\right)  $. Given $q\times p$ monomials
\[
A=\left[
\begin{array}
[c]{ccc}
\alpha_{x_{1}}^{y_{1,1}} & \cdots & \alpha_{x_{p}}^{y_{1,p}}\\
\vdots &  & \vdots\\
\alpha_{x_{1}}^{y_{q,1}} & \cdots & \alpha_{x_{p}}^{y_{q,p}}
\end{array}
\right]  \in M_{Y,\bar{X}}\text{ \ and \ }B=\left[
\begin{array}
[c]{ccc}
\beta_{x_{1,1}}^{y_{1}} & \cdots & \beta_{x_{1,p}}^{y_{1}}\\
\vdots &  & \vdots\\
\beta_{x_{q,1}}^{y_{q}} & \cdots & \beta_{x_{q,p}}^{y_{q}}
\end{array}
\right]  \in M_{\bar{Y},X},
\]
define the \emph{row leaf sequence} \emph{of} $A$ to be the $p$-tuple
${\operatorname*{rls}}\left(  A\right)  =\left(  x_{1},\ldots,x_{p}\right)  $;
define the \emph{column} \emph{leaf sequence} \emph{of} $B$ to be the $q$-tuple
${\operatorname*{cls}}\left(  B\right)  =\left(  y_{1},\ldots,y_{q}\right)
^{T}.$ Let
\[
\overline{\mathbf{M}}_{{\operatorname*{row}}}:=\bigoplus_{\mathbf{\bar{X}}
,Y\in\mathbb{N}^{q\times p};\text{ }p,q\geq1}M_{Y,\bar{X}}\text{ \ \ and
\ \ }\overline{\mathbf{M}}^{{\operatorname*{col}}}:=\bigoplus_{X,\bar{Y}
\in\mathbb{N}^{q\times p};\text{ }p,q\geq1}M_{\bar{Y},X};
\]
then
\[
\mathbf{M}=\overline{\mathbf{M}}_{{\operatorname*{row}}}\cap\overline
{\mathbf{M}}^{{\operatorname*{col}}}
\]
is the \emph{bisequence submodule }of $TTM,$ and a $q\times p$ monomial
$A\in\mathbf{M}$ is represented as a \emph{bisequence} \emph{matrix}
\[
A=\left[
\begin{array}
[c]{lll}
\alpha_{x_{1}}^{y_{1}} & \cdots & \alpha_{x_{p}}^{y_{1}}\\
\multicolumn{1}{c}{\vdots} & \multicolumn{1}{c}{} & \multicolumn{1}{c}{\vdots
}\\
\alpha_{x_{1}}^{y_{q}} & \cdots & \alpha_{x_{p}}^{y_{q}}
\end{array}
\right]  .
\]
Unless otherwise stated, we shall assume that $\mathbf{x\times y\in}\left(
x_{1},\ldots,x_{p}\right)  \times\left(  y_{1},\ldots,y_{q}\right)  ^{T}
\in\mathbb{N}^{1\times p}\times\mathbb{N}^{q\times1},$ $p,q\geq1$; we will
often express $\mathbf{y}$ as a row vector. Let
\[
\mathbf{M}_{\mathbf{x}}^{\mathbf{y}}=\left\langle A\in\mathbf{M}\mid
\mathbf{x}={\operatorname*{rls}}\left(  A\right)  \text{ and }\mathbf{y}
={\operatorname*{cls}}\left(  A\right)  \right\rangle ;
\]
then
\[
\mathbf{M}=\bigoplus_{\mathbf{x}\times\mathbf{y}}\mathbf{M}_{\mathbf{x}
}^{\mathbf{y}}.
\]

For example, given a graded $R$-module $H,$ set $M_{n,m}=Hom\left(  H^{\otimes
m},H^{\otimes n}\right)  $, and think of $\alpha_{m}^{n}\in M_{n,m}$ as a
composition of multilinear indecomposable operations $\theta_{i}
^{j}:H^{\otimes i}\rightarrow H^{\otimes j}$ pictured as an upward-directed
graph with $m$ inputs and $n$ outputs; then a $q\times p$ monomial
$A\in\mathbf{M}_{\mathbf{x}}^{\mathbf{y}}$ is a $q\times p$ matrix of such
graphs. By identifying $\left(  H^{\otimes q}\right)  ^{\otimes p}
\approx\left(  H^{\otimes p}\right)  ^{\otimes q}$ with $\left(  q,p\right)
\in\mathbb{N}^{2}$, we think of a $q\times p$ monomial $A\in\mathbf{M}
_{\mathbf{x}}^{\mathbf{y}}$ as an operator on the positive integer lattice
$\mathbb{N}^{2},$ pictured as an arrow $\left(  \left\vert \mathbf{x}
\right\vert ,q\right)  \mapsto\left(  p,\left\vert \mathbf{y}\right\vert
\right)  ,$ where $\left\vert \mathbf{u}\right\vert =\Sigma u_{i};$ but
unfortunately, this representation is not faithful.

The \emph{bisequence vector submodule} is the intersection\emph{\ }
\[
\mathbf{V}=\overline{\mathbf{V}}\cap\mathbf{M}=\bigoplus\limits_{s,t\in
\mathbb{N}}\mathbf{M}_{s}^{\mathbf{y}}\oplus\mathbf{M}_{\mathbf{x}}^{t}.
\]
A submodule
\[
\mathbf{W}=M\oplus\bigoplus\limits_{\mathbf{x},\mathbf{y}\notin\mathbb{N}
;\text{ }s,t\in\mathbb{N}}\mathbf{W}_{s}^{\mathbf{y}}\oplus\mathbf{W}
_{\mathbf{x}}^{t}\subseteq\mathbf{V}
\]
is \emph{telescoping }if for all $\mathbf{x},\mathbf{y},s,t$

\begin{enumerate}
\item[\textit{(i)}] $\mathbf{W}_{s}^{\mathbf{y}}\subseteq\mathbf{M}
_{s}^{\mathbf{y}}$ and $\mathbf{W}_{\mathbf{x}}^{t}\subseteq\mathbf{M}
_{\mathbf{x}}^{t};$

\item[\textit{(ii)}] $\alpha_{s}^{y_{1}}\otimes\cdots\otimes\alpha_{s}^{y_{q}
}\in\mathbf{W}_{s}^{\mathbf{y}}$ implies $\alpha_{s}^{y_{1}}\otimes
\cdots\otimes\alpha_{s}^{y_{j}}\in\mathbf{W}_{s}^{y_{1}\cdots y_{j}}$ for all
$j<q;$

\item[\textit{(iii)}] $\beta_{x_{1}}^{t}\otimes\cdots\otimes\beta_{x_{p}}
^{t}\in\mathbf{W}_{\mathbf{x}}^{t}$ implies $\beta_{x_{1}}^{t}\otimes
\cdots\otimes\beta_{x_{i}}^{t}\in\mathbf{W}_{x_{1}\cdots x_{i}}^{t}$ for all
$i<p.$
\end{enumerate}

\noindent Thus the truncation\emph{\ }maps $\tau:\mathbf{W}_{s}^{y_{1}\cdots
y_{j}}\rightarrow\mathbf{W}_{s}^{y_{1}\cdots y_{j-1}}$ and $\tau
:\mathbf{W}_{x_{1}\cdots x_{i}}^{t}\rightarrow\mathbf{W}_{x_{1}\cdots x_{i-1}
}^{t}$ determine the following \textquotedblleft telescoping\textquotedblright
\ sequences of submodules:
\begin{align*}
\tau\left(  \mathbf{W}_{s}^{\mathbf{y}}\right)   &  \subseteq\tau^{2}\left(
\mathbf{W}_{s}^{\mathbf{y}}\right)  \subseteq\cdots\subseteq\tau^{q-1}\left(
\mathbf{W}_{s}^{\mathbf{y}}\right)  =\mathbf{W}_{s}^{y_{1}}\\
\tau\left(  \mathbf{W}_{\mathbf{x}}^{t}\right)   &  \subseteq\tau^{2}\left(
\mathbf{W}_{\mathbf{x}}^{t}\right)  \subseteq\cdots\subseteq\tau^{p-1}\left(
\mathbf{W}_{\mathbf{x}}^{t}\right)  =\mathbf{W}_{x_{1}}^{t}.
\end{align*}
In general, $\mathbf{W}_{\mathbf{x}}^{t}$ is an \emph{additive }submodule of
$\mathbf{M}_{x_{1}}^{t}\otimes\cdots\otimes\mathbf{M}_{x_{p}}^{t}$, which does
\emph{not} necessarily decompose as $B_{1}\otimes\cdots\otimes B_{p}$ with
$B_{i}\subseteq\mathbf{M}_{x_{i}}^{t}$. The \emph{telescopic extension}\ of a
telescoping submodule $\mathbf{W\subseteq V}$ is the submodule of matrices
$\mathcal{W\subseteq}\overline{\mathbf{M}}$ with the following properties: If
$A=\left[  \alpha_{x_{i,j}}^{y_{i,j}}\right]  \ $is a $q\times p$ monomial in
$\mathcal{W}$, and

\begin{enumerate}
\item[(\textit{i)}] $\left[  \alpha_{x_{i,j}}^{y_{i,j}}\text{ }\cdots\text{
}\alpha_{x_{i,j+m}}^{y_{i,j+m}}\right]  $ is a string in the $i^{th}$ row of
$A$ such that $y_{i,j}=\cdots=y_{i,j+m}=t,$ then $\alpha_{x_{i,j}}^{t}
\otimes\cdots\otimes\alpha_{x_{i,j+m}}^{t}\in\mathbf{W}_{x_{i,j}
,...,x_{i,j+m}}^{t}.$

\item[\textit{(ii)}] $\left[  \alpha_{x_{i,j}}^{y_{i,j}}\text{ }\cdots\text{
}\alpha_{x_{i+r,j}}^{y_{i+r,j}}\right]  ^{T}$ is a string in the $j^{th}$
column of $A$ such that $x_{i,j}=\cdots=x_{i+r,j}=s,$ then $\alpha
_{s}^{y_{i,j}}\otimes\cdots\otimes\alpha_{s}^{y_{i+r,j}}\in\mathbf{W}
_{s}^{y_{i,j},...,y_{i+r,j}}.$
\end{enumerate}

\noindent Thus if $A\in\mathbf{M}_{\mathbf{x}}^{\mathbf{y}}\cap\mathcal{W}$,
the $i^{th}$ row of $A$ lies in $\mathbf{W}_{\mathbf{x}}^{y_{i}}$, and the
$j^{th}$ column of $A$ lies in $\mathbf{W}_{x_{j}}^{\mathbf{y}}$.

\begin{definition}
A pair of $\left(  q\times s,t\times p\right)  $ monomials $A\otimes B=\left[
\alpha_{v_{kl}}^{y_{kl}}\right]  \otimes\left[  \beta_{x_{ij}}^{u_{ij}
}\right]  \in\overline{\mathbf{M}}\mathbf{\otimes}\overline{\mathbf{M}}$ is a

\begin{enumerate}
\item[\textit{(i)}] \textbf{Transverse Pair} (TP) if $s=t=1,$ $u_{1,j}=q,$ and
$v_{k,1}=p$ for all $j$ and $k,$ i.e., setting $x_{j}=x_{1,j}$ and
$y_{k}=y_{k,1}$ gives
\[
A\otimes B=\left[
\begin{array}
[c]{c}
\alpha_{p}^{y_{1}}\\
\vdots\\
\alpha_{p}^{y_{q}}
\end{array}
\right]  \otimes\left[
\begin{array}
[c]{lll}
\beta_{x_{1}}^{q} & \cdots & \beta_{x_{p}}^{q}
\end{array}
\right]  \in\mathbf{M}_{p}^{\mathbf{y}}\otimes\mathbf{M}_{\mathbf{x}}^{q}.
\]

\item[\textit{(ii)}] \textbf{Block Transverse Pair} (BTP) if there exist
$t\times s$ block decompositions $A=\left[  A_{k^{\prime},l}^{\prime}\right]
$ and $B=\left[  B_{i,j^{\prime}}^{\prime}\right]  $ such that $A_{il}
^{\prime}\otimes B_{il}^{\prime}$ is a TP for\ all $i$ and $l$.
\end{enumerate}
\end{definition}

\noindent A pair $A\otimes B\in\mathbf{M}_{\mathbf{v}}^{\mathbf{y}}
\otimes\mathbf{M}_{\mathbf{x}}^{\mathbf{u}}$ is a BTP if and only if
$\mathbf{x\times y}\in\mathbb{N}^{1\times|\mathbf{v}|}\times\mathbb{N}
^{|\mathbf{u}|\times1}$ if and only if the initial point of arrow $A$
coincides with the terminal point of arrow $B$ as operators on $\mathbb{N}
^{2}$.

Given a family of maps $\bar{\gamma}=\{M^{\otimes q}\otimes M^{\otimes
p}\rightarrow M\}_{p,q\geq1},$ let $\gamma=\{\gamma_{\mathbf{x}}^{\mathbf{y}
}:\mathbf{M}_{p}^{\mathbf{y}}\otimes\mathbf{M}_{\mathbf{x}}^{q}\rightarrow
\mathbf{M}_{\left\vert \mathbf{x}\right\vert }^{\left\vert \mathbf{y}
\right\vert }\}.\ $Then $\gamma$ induces a global product $\Upsilon
:\overline{\mathbf{M}}\otimes\overline{\mathbf{M}}\rightarrow\overline
{\mathbf{M}}$ defined by
\begin{equation}
\Upsilon\left(  A\otimes B\right)  =\left\{
\begin{array}
[c]{ll}
\left[  \gamma\left(  A_{ij}^{\prime}\otimes B_{ij}^{\prime}\right)  \right]
, & A\otimes B\ \text{is a\ BTP}\\
0, & \text{otherwise,}
\end{array}
\right.  \label{upsilon}
\end{equation}
where $A_{ij}^{\prime}\otimes B_{ij}^{\prime}$ is the $\left(  i,j\right)
^{th}$ TP in the BTP decomposition of $A\otimes B.$ Obviously $\Upsilon$ is
closed in both $\overline{\mathbf{M}}_{{\operatorname*{row}}}$ and
$\overline{\mathbf{M}}^{{\operatorname*{col}}},$ and consequently in
$\mathbf{M.}$ We denote the $\Upsilon$-product by \textquotedblleft$\cdot
$\textquotedblright\ or juxtaposition; when $A\otimes B=\left[  \alpha
_{p}^{y_{j}}\right]  ^{T}\otimes\left[  \beta_{x_{i}}^{q}\right]  $ is a TP,
we write
\[
AB=\gamma(\alpha_{p}^{y_{1}},\ldots,\alpha_{p}^{y_{q}};\beta_{x_{1}}
^{q},\ldots,\beta_{x_{p}}^{q}).
\]
When pictured as an arrow in $\mathbb{N}^{2},$ $AB$ \textquotedblleft
transgresses\textquotedblright\ from the horizontal axis $y=1$ to the vertical
axis $x=1$ in $\mathbb{N}^{2}$.

Recall that the $\Upsilon$-product acts associatively on $\mathbf{M}$ (see \cite{SU4}). However, if  $A_1\!\cdots A_n\!$ is a matrix string in $\overline{\mathbf{M}}$, associativity fails whenever one association of $A_1\cdots A_n$ produces a sequence of BTPs while some other does not. For example, in
\[
ABC=\left[
\begin{array}{c}
\theta _{2}^{1} \\
\theta _{2}^{1}
\end{array}
\right] \left[
\begin{array}{cc}
\theta _{2}^{1} & \mathbf{1} \\
\mathbf{1} & \theta _{2}^{1}
\end{array}
\right] \left[
\begin{array}{ccc}
\theta _{1}^{2} & \theta _{1}^{2} & \theta _{1}^{2}
\end{array}
\right],
\]
$\{A\otimes B, AB\otimes C\}$ is a sequence of BTPs but $\{B\otimes C, A\otimes BC\}$ is not; thus $(AB)C\neq0$ while $A(BC)=0$.

Let $\mathbf{1}^{1\times p}=\left(  1,\ldots,1\right)  \in\mathbb{N}^{1\times
p}$, and $\mathbf{1}^{q\times1}=\left(  1,\ldots,1\right)  ^{T}\in
\mathbb{N}^{q\times1}$; we often suppress the exponents when the context is clear.

\begin{definition}
\label{prematrad}A \textbf{prematrad} $(M,\gamma,\eta)$ is a bigraded
$R$-module $M=\left\{  M_{n,m}\right\}  _{m,n\geq1}$ together with a family of
structure maps $\gamma=\{\gamma_{\mathbf{x}}^{\mathbf{y}}:\mathbf{M}
_{p}^{\mathbf{y}}\otimes\mathbf{M}_{\mathbf{x}}^{q}\rightarrow\mathbf{M}
_{\left\vert \mathbf{x}\right\vert }^{\left\vert \mathbf{y}\right\vert }\}$
and a unit $\eta:R\rightarrow\mathbf{M}_{1}^{1}$ such that

\begin{enumerate}
\item[\textit{(i)}] $\Upsilon\left(  \Upsilon\left(  A;B\right)  ;C\right)
=\Upsilon\left(  A;\Upsilon\left(  B;C\right)  \right)  $ whenever $A\otimes
B$ and $B\otimes C$ are BTPs in $\overline{\mathbf{M}}\otimes\overline
{\mathbf{M}};$

\item[\textit{(ii)}] the following compositions are the canonical
isomorphisms:
\begin{align*}
&  R^{\otimes b}\otimes\mathbf{M}_{a}^{b}\overset{\eta^{\otimes b}
\otimes\operatorname*{Id}}{\longrightarrow}\mathbf{M}_{1}^{\mathbf{1}
^{b\times1}}\otimes\mathbf{M}_{a}^{b}\overset{\gamma_{a}^{\mathbf{1}
^{b\times1}}}{\longrightarrow}\mathbf{M}_{a}^{b};\\
&  \mathbf{M}_{a}^{b}\otimes R^{\otimes a}\overset{\operatorname*{Id}
\otimes\eta^{\otimes a}}{\longrightarrow}\mathbf{M}_{a}^{b}\otimes
\mathbf{M}_{\mathbf{1}^{1\times a}}^{1}\overset{\gamma_{\mathbf{1}^{1\times
a}}^{b}}{\longrightarrow}\mathbf{M}_{a}^{b}.
\end{align*}
We denote the element $\eta(1_{R})$ by $\mathbf{1}_{\mathbf{M}}.$
\end{enumerate}

\noindent A \textbf{morphism of prematrads} $(M,\gamma)$ and $(M^{\prime
},\gamma^{\prime})$ is a map $f:{M}\rightarrow{M}^{\prime}$ such that
$f\gamma_{\mathbf{x}}^{\mathbf{y}}=\gamma{^{\prime}}_{\mathbf{x}}^{\mathbf{y}
}(f^{\otimes q}\otimes f^{\otimes p})$ for all $\mathbf{x}\times\mathbf{y} $.
\end{definition}

Note that if  $A_1\cdots A_n$ is a matrix string in a prematrad, Axiom (i) implies that  associativity holds whenever \emph{every} association of $A_1\cdots A_n$ produces a sequence of BTPs.

\begin{definition}
\label{basic}Let $(M,\gamma,\eta)$ be a prematrad. A string\textbf{\ }of
matrices $A_{s}\cdots A_{1}$ is \textbf{basic} \textbf{in} $\mathbf{M}_{m}
^{n}$ if

\begin{enumerate}
\item[\textit{(i)}] $A_{1}\in\mathbf{M}_{\mathbf{x}}^{b},\ \left\vert
\mathbf{x}\right\vert =m,$

\item[\textit{(ii)}] $A_{i}\in\overline{\mathbf{M}}\smallsetminus\left\{
\mathbf{1}^{q\times p}\mid p,q\in\mathbb{N}\right\}  $ for all $i,$

\item[\textit{(iii)}] $A_{s}\in\mathbf{M}_{a}^{\mathbf{y}},$ $\left\vert
\mathbf{y}\right\vert =n,$ and

\item[\textit{(iv)}] some association of $A_{s}\cdots A_{1}$ defines a
sequence of BTPs and non-zero $\Upsilon$-products.
\end{enumerate}
\end{definition}

Definition \ref{freeprematrad} constructs a bigraded set
$G^{\operatorname{pre}}=G_{\ast,\ast}^{\operatorname*{pre}},$ where
$G_{n,m}^{\operatorname*{pre}}$ is defined in terms of
\[
G_{\left[  n,m\right]  }=\bigcup\limits_{\substack{i\leq m,\text{ }j\leq
n,\\i+j<m+n}}G_{j,i}^{\operatorname*{pre}}.
\]
We denote the sets of matrices over $G_{\left[  n,m\right]  }$ and
$G^{\operatorname{pre}}$ by $\overline{\mathbf{G}}_{\left[  n,m\right]  }
\ $and $\overline{\mathbf{G}},$ respectively; $\mathbf{G}$ denotes the subset
of bisequence matrices in $\overline{\mathbf{G}}\mathbf{.}$

\begin{definition}
\label{freeprematrad}Let $\Theta=\left\langle \theta_{m}^{n}\mid\theta_{1}
^{1}=\mathbf{1}\right\rangle _{m,n\geq1}$ be a free bigraded $R$-module
generated by singletons $\theta_{m}^{n}$, and set $G_{1,1}^{\operatorname{pre}
}=\mathbf{1}.$ Inductively, if $m+n\geq3$ and $G_{j,i}^{\operatorname*{pre}}$
has been constructed for $i\leq m,$ $j\leq n,$ and $i+j<m+n,$ define
\[
G_{n,m}^{\operatorname{pre}}=\theta_{m}^{n}\cup\left\{  \text{basic strings
}A_{s}\cdots A_{1}\text{ in }\mathbf{G}_{m}^{n}\text{ with }s\geq2\right\}  .
\]
Let $\sim$ be the equivalence relation on $G^{\operatorname{pre}}=G_{\ast
,\ast}^{\operatorname*{pre}}$ generated by $\left[  A_{ij}B_{ij}\right]
\sim\left[  A_{ij}\right]  \left[  B_{ij}\right]  $ if and only if $\left[
A_{ij}\right]  \times\left[  B_{ij}\right]  \in\overline{\mathbf{G}}
\times\overline{\mathbf{G}}$ is a BTP, and let $F^{{\operatorname{pre}}
}\left(  \Theta\right)  =\langle G^{\operatorname{pre}}\diagup\sim\rangle.$
The \textbf{free prematrad generated by} $\Theta$ is the prematrad
$(F^{{\operatorname{pre}}}(\Theta),\gamma,\eta),$ where $\gamma$ is
juxtaposition and $\eta:R\rightarrow F_{1,1}^{\operatorname{pre}}\left(
\Theta\right)  $ is given by $\eta\left(  1_{R}\right)  =\mathbf{1}$.
\end{definition}

Let $\mathcal{W}$ be the telescopic extension of a telescoping submodule
$\mathbf{W.}$ If $A\otimes B$ is a BTP in $\mathcal{W}\otimes\mathcal{W}$,
each TP $A^{\prime}\otimes B^{\prime}$ in $A\otimes B$ lies in $\mathbf{W}
_{p}^{\mathbf{y}}\otimes\mathbf{W}_{\mathbf{x}}^{q}$ for some $\mathbf{x}
,\mathbf{y},p,q.$ Consequently, $\gamma_{_{\mathbf{W}}}$ extends to a global
product $\Upsilon:\mathcal{W}\otimes\mathcal{W}\rightarrow\mathcal{W}$ as in
(\ref{upsilon}). In fact, $\mathcal{W}$ is the smallest matrix submodule
containing $\mathbf{W}$ on which $\Upsilon$ is well-defined.

\begin{definition}
Let $\mathbf{W}$ be a telescoping submodule of $TTM$, and let $\mathcal{W}$ be
its telescopic extension. Let $\gamma_{_{\mathbf{W}}}=\left\{  \gamma
_{\mathbf{x}}^{\mathbf{y}}:\mathbf{W}_{p}^{\mathbf{y}}\otimes\mathbf{W}
_{\mathbf{x}}^{q}\rightarrow\mathbf{W}_{\left\vert \mathbf{x}\right\vert
}^{\left\vert \mathbf{y}\right\vert }\right\}  $ be a structure map, and let
$\eta:R\rightarrow\mathbf{M}$. The triple $\left(  M,\gamma_{_{\mathbf{W}}
},\eta\right)  $ is a \textbf{local prematrad (with domain} $\mathbf{W}$) if
the following axioms are satisfied:

\begin{enumerate}
\item[\textit{(i)}] $\mathbf{W}_{\mathbf{x}}^{1}=\mathbf{M}_{\mathbf{x}}^{1}$
and $\mathbf{W}_{1}^{\mathbf{y}}=\mathbf{M}_{1}^{\mathbf{y}}$ for all
$\mathbf{x},\mathbf{y};$

\item[\textit{(ii)}] $\Upsilon\left(  \Upsilon\left(  A;B\right)  ;C\right)
=\Upsilon\left(  A;\Upsilon\left(  B;C\right)  \right)  $ whenever $A\otimes
B$ and $B\otimes C$ are BTPs in $\mathcal{W\otimes W}$;

\item[\textit{(iii)}] the prematrad unit axiom holds for $\gamma
_{_{\mathbf{W}}}.$
\end{enumerate}
\end{definition}

The ``configuration module'' of a local prematrad is defined in terms of
the combinatorics of permutahedra.
Recall that each codimension $k$ face of the permutahedron $P_{m}$ is
identified with a pair of planar rooted trees with levels (PLTs)-- one up-rooted
with $m+1$ leaves and $k+1$ levels, and the other its down-rooted mirror image
(see \cite{Loday}, \cite{SU2}). Define the $\left(  m,1\right)  $\emph{-row
descent sequence} of the $m$-leaf up-rooted corolla $\curlywedge_{m}$ to be
$\mathbf{m}=\left(  m\right)  .$ Given an up-rooted PLT $T$ with $k\geq2$
levels, successively remove the levels of $T$ and obtain a sequence of
subtrees $T=T^{k},T^{k-1},...,T^{1},$ in which $T^{i}-T^{i-1}$ is the sequence
of corollas $\curlywedge_{m_{i,1}}\cdots\curlywedge_{m_{i,r_{i}}}.$ Recover
$T$ inductively by attaching $\curlywedge_{m_{i,j}}$ to the $j^{th}$ leaf of
$T^{i-1}.$ Define the $i^{th}$ \emph{leaf sequence} of $T$ to be the row
matrix $\mathbf{m}_{i}=(m_{i,1},\ldots,m_{i,r_{i}})$, and the $\left(
m,k\right)  $\emph{-row descent sequence of} $T$ to be the $k$-tuple of row
matrices $\left(  \mathbf{m}_{1},\ldots,\mathbf{m}_{k}\right)  .$ Dually,
define the $\left(  n,1\right)  $\emph{-column descent sequence} of the
$n$-leaf down-rooted corolla $\curlyvee^{n}$ to be $\mathbf{n}=\left(
n\right)  .$ Define the $\left(  n,1\right)  $\emph{-column descent sequence}
of the $n$-leaf down-rooted corolla $\curlyvee_{n}$ to be $\mathbf{n}=\left(
n\right)  .$ Given a down-rooted PLT $T$ with $l\geq2$ levels, successively
remove the levels of $T$, and obtain a sequence of subtrees $T=T^{l}
,T^{l-1},...,T^{1},$ in which $T^{i}-T^{i-1}$ is the sequence of corollas
$\curlyvee^{n_{i,1}}\cdots\curlyvee^{n_{i,s_{i}}}.$ Recover $T$ inductively by
attaching $\curlyvee^{n_{i,j}}$ to the $j^{th}$ leaf of $T^{i-1}$ for each
$i.$ Define the $i^{th}$\emph{\ leaf sequence} of $T$ to be the column matrix
$\mathbf{n}_{i}=(n_{i,1},\ldots,n_{i,s_{i}})^{T}$, and the $\left(  n,l\right)
$-\emph{column} \emph{descent sequence of }$T$ to be the $l$-tuple of column
matrices $\left(  \mathbf{n}_{l},\ldots,\mathbf{n}_{1}\right)  $.

Let $\mathcal{W}_{{\operatorname{row}}}=\mathcal{W}\cap\overline{\mathbf{M}
}_{{\operatorname{row}}}$, and $\mathcal{W}^{\operatorname{col}}=\mathcal{W}
\cap\overline{\mathbf{M}}^{\operatorname{col}}.$

\begin{definition}
Given a local prematrad $(M,\gamma_{\mathbf{W}})$ with domain $\mathbf{W},$
let $\zeta\in{M}_{\ast,m}$ and $\xi\in M_{n,\ast}$ be elements with
$m,n\geq2.$

\begin{enumerate}
\item[\textit{(i)}] A \textbf{row factorization of} $\zeta$ \textbf{with
respect to} $\!\mathbf{W}$ \!is a $\Upsilon\!$-factorization
$A_{1}\!\cdots\! A_{k}\\
 = \zeta$ such that $A_{j}\in\mathcal{W}_{\operatorname{row}}$
and ${\operatorname{rls}}(A_{j})\neq\mathbf{1}$ for all $j$. The sequence
$\alpha=\left(  {\operatorname{rls}}(A_{1}), ...,{\operatorname{rls}}
(A_{k})\right)  $ is the related $\left(  m,k\right)  $-row \textbf{descent
sequence of} $\zeta$.

\item[(\textit{ii)}] A \textbf{column factorization of }$\xi$ \textbf{with
respect to} $\mathbf{W}$ is a $\Upsilon$-factorization $B_{l}\cdots B_{1}=\xi$
such that $B_{i}\in\mathcal{W}^{{\operatorname{col}}}$\thinspace and
${\operatorname{cls}}(B_{i})\neq\mathbf{1}$ for all $i.$ The sequence
$\beta=\left(  {\operatorname{cls}}(B_{l}),...,{\operatorname{cls}}
(B_{1})\right)  $ is the related $\left(  n,l\right)  $-\textbf{column}
\textbf{descent sequence of} $\xi$.
\end{enumerate}
\end{definition}

Given a local prematrad $(M,\gamma_{\mathbf{W}})$, and elements $A\in{M}
_{\ast,s}$ and $B\in{M}_{t,\ast}$ with $s,t\geq2$, choose a row factorization
$A_{1}\cdots A_{k}$ of $A$ with respect to $\mathbf{W}$ having $\left(
s,k\right)  $-row descent sequence $\alpha,$ and a column factorization
$B_{l}\cdots B_{1}$ of $B\ $with respect to $\mathbf{W}$ having $\left(
t,l\right)  $-column descent sequence $\beta.$ Then $\alpha$ identifies $A$
with an up-rooted $s$-leaf, $k$-level PLT and a codimension $k-1$ face
$\overset{_{\wedge}}{e}_{A}$ of $P_{s-1}$, and $\beta$ identifies $B$ with a
down-rooted $t$-leaf, $l$-level PLT and a codimension $l-1$ face
$\overset{_{\vee}}{e}_{B}$ of $P_{t-1}$. Extending to Cartesian products,
identify the monomials $A=A_{1}\otimes\cdots\otimes A_{q}\in\left(  {M}
_{\ast,s}\right)  ^{\otimes q}$ and $B=B_{1}\otimes\cdots\otimes B_{p}
\in\left(  {M}_{t,\ast}\right)  ^{\otimes p}$ with the product cells
\[
\overset{\wedge}{e}_{A}=\overset{_{\wedge}}{e}_{A_{1}}\times\cdots
\times\overset{_{\wedge}}{e}_{A_{q}}\subset P_{s-1}^{\times q}\ \ \text{and}
\ \ \overset{_{\vee}}{e}_{B}=\overset{_{\vee}}{e}_{B_{1}}\times\cdots
\times\overset{_{\vee}}{e}_{B_{p}}\subset P_{t-1}^{\times p}.
\]

Now consider the S-U diagonal $\Delta_{P}$ (see \cite{SU2}), and recall that there is a
$k$-subdivision $P_{r}^{\left(  k\right)  }$ of $P_{r}$ and a cellular
inclusion $P_{r}^{\left(  k\right)  }\hookrightarrow\Delta^{\left(  k\right)
}\left(  P_{r}\right)  \subset P_{r}^{\times k+1}$ for each $k$ and $r$ (see
\cite{SU4}). Thus for each $q\geq2$, the product cell $\overset{\wedge}{e}
_{A}$ either\emph{\ is} or \emph{is not} a subcomplex of ${\Delta}
^{(q-1)}(P_{s-1})\subset P_{s-1}^{\times q}$, and dually for $\overset{\vee
}{e}_{B}.$ Let $\mathbf{x}_{m,i}^{p}=\left(  1,\ldots,m,\ldots,1\right)
\in\mathbb{N}^{1\times p}$ with $m$ in the $i^{th}$ position, and let
$\mathbf{y}_{q}^{n,j}=\left(  1,\ldots,n,\ldots,1\right)  ^{T}\in
\mathbb{N}^{q\times1}$ with $n$ in the $j^{th}$ position.

\begin{definition}
The \textbf{(left) configuration module} of a local prematrad $\left(
M,\gamma_{_{\mathbf{W}}}\right)  $ is the $R$-module

\[
\Gamma(M,\gamma_{_{\mathbf{W}}})=M\oplus\bigoplus_{\mathbf{x,y}\notin
\mathbb{N};\text{ }s,t\geq1}\Gamma_{s}^{\mathbf{y}}(M)\oplus\Gamma
_{\mathbf{x}}^{t}(M),
\]
where
\begin{align*}
\Gamma_{s}^{\mathbf{y}}(M)  &  =\left\{
\begin{array}
[c]{cl}
\mathbf{M}_{1}^{\mathbf{y}}, & s=1;\text{ }\mathbf{y}=\mathbf{y}_{q}
^{n,j}\text{ for some }n,j,q\\
\left\langle A\in\mathbf{M}_{s}^{\mathbf{y}}\mid\overset{_{\wedge}}{e}
_{A}\subset{\Delta}^{(q-1)}(P_{s-1})\right\rangle , & s\geq2\\
0, & \text{otherwise,}
\end{array}
\right. \\
& \\
\Gamma_{\mathbf{x}}^{t}(M)  &  =\left\{
\begin{array}
[c]{cl}
\mathbf{M}_{\mathbf{x}}^{1}, & t=1;\text{ }\mathbf{x}=\mathbf{x}_{m,i}
^{p}\text{ for some }m,i,p\\
\left\langle B\in\mathbf{M}_{\mathbf{x}}^{t}\mid\overset{_{\vee}}{e}
_{B}\subset{\Delta}^{(p-1)}(P_{t-1})\right\rangle , & t\geq2\\
0, & \text{otherwise.}
\end{array}
\right.
\end{align*}

\end{definition}

\noindent Thus $\Gamma_{\mathbf{x}}^{t}(M)$ is generated by those tensor
monomials $B=\beta_{x_{1}}^{t}\otimes\cdots\otimes\beta_{x_{p}}^{t}\in
{M}_{t,x_{1}}\otimes\cdots\otimes{M}_{t,x_{p}}$ whose tensor factor
$\beta_{x_{i}}^{t}$ is identified with some factor of a product cell in
${\Delta}^{\left(  p-1\right)  }(P_{t-1})$ corresponding to some column
factorization $\beta_{x_{i}}^{t}=B_{i,l}\cdots B_{i,1}$ with respect to
$\mathbf{W}$, and dually for $\Gamma_{s}^{\mathbf{y}}(M).$

\begin{definition}
A local prematrad $(M,\gamma_{_{\mathbf{W}}},\eta)$ is a
\textbf{(left)\thinspace matrad} if
\[
\Gamma_{p}^{\mathbf{y}}(M,\gamma_{_{\mathbf{W}}})\otimes\Gamma_{\mathbf{x}
}^{q}(M,\gamma_{_{\mathbf{W}}})=\mathbf{W}_{p}^{\mathbf{y}}\otimes
\mathbf{W}_{\mathbf{x}}^{q}
\]
for all $p,q\geq2.$ A \textbf{morphism of matrads} is a map of underlying
local prematrads.
\end{definition}

The domain of the free prematrad $(M=F^{^{{\operatorname{pre}}}}
(\Theta),\gamma,$ $\eta)$ generated by $\Theta=\left\langle \theta_{m}
^{n}\right\rangle _{m,n\geq1}$ is $\mathbf{V}=M\oplus\bigoplus
\limits_{\mathbf{x},\mathbf{y}\notin\mathbb{N};\text{ }s,t\in\mathbb{N}
}\mathbf{M}_{s}^{\mathbf{y}}\oplus\mathbf{M}_{\mathbf{x}}^{t},\ $whose
submodules $M,$ $\mathbf{M}_{\mathbf{x}}^{1},$ and $\mathbf{M}_{1}
^{\mathbf{y}}$ are contained in the configuration module $\Gamma\left(
M\right)  $. As above, the symbol \textquotedblleft$\cdot$\textquotedblright
\ denotes the $\gamma$ product.

\begin{definition}
\label{defnfreematrad}Let $(M=F^{^{{\operatorname{pre}}}}(\Theta),\gamma
,\eta)$ be the free prematrad generated by $\Theta=\left\langle \theta_{m}
^{n}\right\rangle _{m,n\geq1},$ let $F(\Theta)=\Gamma(M)\cdot\Gamma(M),$ and
let $\gamma_{_{F\left(  \Theta\right)  }}=\gamma|_{_{\Gamma(M)\otimes
\Gamma(M)}}.$ The \textbf{free matrad generated by} $\Theta$ is the triple
$\left(  F(\Theta),\gamma_{_{F\left(  \Theta\right)  }},\eta\right)  .$
\end{definition}

There is a chain map that identifies module generators of $F_{n,m}(\Theta)$
with cells of the biassociahedron $KK_{n,m}$. We denote the differential
object $\left(  F(\Theta),\partial\right)  $ by $\mathcal{H}_{\infty}$, and
define an $A_{\infty}$-infinity bialgebra as an algebra over $\mathcal{H}
_{\infty}$.

\section{Relative Matrads}

Let $\left(  M,\gamma_{_{M}},\eta_{_{M}}\right)  $ and $\left(  N,\gamma
_{_{N}},\eta_{_{N}}\right)  $ be $R$-prematrads, and let $E=\{E_{n,m}
\}_{m,n\geq1}$ be a bigraded $R$-module. Left and right actions $\lambda
=\{\lambda_{\mathbf{x}}^{\mathbf{y}}:\mathbf{M}_{p}^{\mathbf{y}}
\otimes\mathbf{E}_{\mathbf{x}}^{q}\rightarrow\mathbf{E}_{\left\vert
\mathbf{x}\right\vert }^{\left\vert \mathbf{y}\right\vert }\}$ and
$\rho=\{\rho_{\mathbf{x}}^{\mathbf{y}}:\mathbf{E}_{p}^{\mathbf{y}}
\otimes\mathbf{N}_{\mathbf{x}}^{q}\rightarrow\mathbf{E}_{\left\vert
\mathbf{x}\right\vert }^{\left\vert \mathbf{y}\right\vert }\}$ induce left and
right global products $\Upsilon_{\lambda}:\overline{\mathbf{M}}\otimes
\overline{\mathbf{E}}\rightarrow\overline{\mathbf{E}}$ and $\Upsilon_{\rho
}:\overline{\mathbf{E}}\otimes\overline{\mathbf{N}}\rightarrow\overline
{\mathbf{E}}$ in the same way that $\gamma_{M}=\{\gamma_{\mathbf{x}
}^{\mathbf{y}}:\mathbf{M}_{p}^{\mathbf{y}}\otimes\mathbf{M}_{\mathbf{x}}
^{q}\rightarrow\mathbf{M}_{\left\vert \mathbf{x}\right\vert }^{\left\vert
\mathbf{y}\right\vert }\}$ induces $\Upsilon_{M}:\overline{\mathbf{M}}
\otimes\overline{\mathbf{M}}\rightarrow\overline{\mathbf{M}}$ (see \cite{SU4}
and formula (\ref{upsilon})).

\begin{definition}
\label{reldefn}A tuple $\left(  M,E,N,\lambda,\rho\right)  $ is a
\textbf{relative prematrad} if

\begin{enumerate}
\item[\textit{(i)}] Associativity holds:

\begin{enumerate}
\item[\textit{(a)}] $\Upsilon_{\rho}(\Upsilon_{\lambda}\otimes${$\mathbf{1}$
}$) = \Upsilon_{\lambda}(${$\mathbf{1}$}$\otimes\Upsilon_{\rho});$

\item[\textit{(b)}] $\Upsilon_{\lambda}(\Upsilon_{M}\otimes${$\mathbf{1}$ }$)
= \Upsilon_{\lambda}(${$\mathbf{1}$}$\otimes\Upsilon_{\lambda});$

\item[\textit{(c)}] $\Upsilon_{\rho}(\Upsilon_{\rho}\otimes${$\mathbf{1}$}$) =
\Upsilon_{\rho}(${$\mathbf{1}$}$\otimes\Upsilon_{N}).$
\end{enumerate}

\item[\textit{(ii)}] The units $\eta_{M}$ and $\eta_{N}$ induce the following
canonical isomorphisms for all $a,b\in\mathbb{N}$:
\begin{align*}
&  R^{\otimes b}\otimes\mathbf{E}_{a}^{b}\overset{\eta_{_{M}}^{\otimes
b}\otimes{\mathbf{1}}}{\longrightarrow}\mathbf{M}_{1}^{\mathbf{1}^{b\times1}
}\otimes\mathbf{E}_{a}^{b}\overset{\lambda_{a}^{\mathbf{1}^{b\times1}
}}{\longrightarrow}\mathbf{E}_{a}^{b};\\
&  \mathbf{E}_{a}^{b}\otimes R^{\otimes a}\overset{{\mathbf{1}}\otimes\eta
_{N}^{\otimes a}}{\longrightarrow}\mathbf{E}_{a}^{b}\otimes\mathbf{N}
_{\mathbf{1}^{1\times a}}^{1}\overset{\rho_{\mathbf{1}^{1\times a}}
^{b}}{\longrightarrow}\mathbf{E}_{a}^{b}.
\end{align*}

\end{enumerate}

\noindent We refer to $E$ as an $M$-$N$-\textbf{bimodule; } when $M=N$ we refer
to $E$ as an $M$\textbf{-bimodule}.
\end{definition}

\begin{definition}
A \textbf{morphism} $f:(M,E,N,\lambda,\rho)\rightarrow(M^{\prime},E^{\prime
},N^{\prime},\lambda^{\prime},\rho^{\prime})$ of relative prematrads is a
triple $\left(  f_{M}:{M}\rightarrow{M}^{\prime},\text{ }f_{E}:E\rightarrow
E^{\prime},\text{ }f_{N}:N\rightarrow N^{\prime}\right)  $ such that

\begin{enumerate}
\item[\textit{(i)}] $f_{M}$ and $f_{N}$ are maps of prematrads;

\item[\textit{(ii)}] $f_{E}$ commutes with left and right actions, i.e.,
$f_{E}\circ\lambda_{\mathbf{x}}^{\mathbf{y}}=\lambda{^{\prime}}_{\mathbf{x}
}^{\mathbf{y}}\circ(f_{M}^{\otimes q}\otimes f_{E}^{\otimes p})$ and
$f_{E}\circ\rho_{\mathbf{x}}^{\mathbf{y}}=\rho{^{\prime}}_{\mathbf{x}
}^{\mathbf{y}}\circ(f_{E}^{\otimes q}\otimes f_{N}^{\otimes p})$ for all
$\mathbf{x\times y\in}\mathbb{N}^{p\times1}\times\mathbb{N}^{1\times q}.$
\end{enumerate}
\end{definition}

Tree representations of $\lambda_{1}^{\mathbf{x}}$ and $\rho_{1}^{\mathbf{x}}$
are related to those of $\lambda_{\mathbf{x}}^{1}$ and $\rho_{\mathbf{x}}^{1}$
by a reflection in some horizontal axis. Although $\rho_{\mathbf{x}}^{1}$
agrees with Markl, Shnider, and Stasheff's right module action over an operad
\cite{MSS}, $\lambda_{\mathbf{x}}^{1}$ differs fundamentally from their left
module action, and our definition of an \textquotedblleft
operadic\ bimodule\textquotedblright\ is consistent with their definition of
an operadic ideal.

Given graded $R$-modules $A$ and $B,$ let
\[
\begin{array}
[c]{lllll}
U_{A} & = & \mathcal{E}nd_{TA} & = & \left\{  N_{s,p}\,=Hom\left(  A^{\otimes
p},A^{\otimes s}\right)  \right\}  _{p,s\geq1}\\
U_{A,B} & = & Hom\left(  TA,TB\right)  & = & \left\{  E_{t,q}\,\,=Hom\left(
A^{\otimes q},B^{\otimes t}\right)  \right\}  _{q,t\geq1}\\
U_{B} & = & \mathcal{E}nd_{TB} & = & \left\{  M_{u,r}=Hom\left(  B^{\otimes
r},B^{\otimes u}\right)  \right\}  _{r,u\geq1},
\end{array}
\]
and define left and right actions $\lambda$ and $\rho$ in terms of the
horizontal and vertical operations $\times$ and $\circ$ analogous to those in
the prematrad structures on $U_{A}$ and $U_{B}$ (see Section 2 above and
\cite{SU4}). Then the\emph{\ }relative $\operatorname*{PROP}\left(
U_{B},U_{A,B},U_{A},\lambda,\rho\right)  $ is the universal example of a
relative prematrad.

Definition \ref{freerelpre} constructs a bigraded set $\mathcal{G}
^{\operatorname{pre}}=\mathcal{G}_{\ast,\ast}^{\operatorname*{pre}},$ where
$\mathcal{G}_{n,m}^{\operatorname*{pre}}$ is defined in terms of
\[
\mathcal{G}_{\left[  n,m\right]  }=\bigcup\limits_{\substack{i\leq m,\text{
}j\leq n,\\i+j<m+n}}\mathcal{G}_{j,i}^{\operatorname*{pre}}.
\]
We denote the sets of matrices over $\mathcal{G}_{\left[  n,m\right]  }$ and
$\mathcal{G}^{{\operatorname*{pre}}}$ by $\overline{\mathbf{C}}_{\left[
n,m\right]  }$ and $\overline{\mathbf{C}},$ respectively; $\mathbf{C}$ denotes
the subset of bisequence matrices in $\overline{\mathbf{C}}.$ As before,
$\overline{\mathbf{G}}$ denotes the sets of matrices over
$G^{\operatorname{pre}}$, and $\mathbf{G}$ denotes the subset of bisequence
matrices in $\overline{\mathbf{G}}$.

\begin{definition}
\label{freerelpre}Given a free bigraded $R$-module $\Theta=\left\langle
\theta_{m}^{n}\mid\theta_{1}^{1}=\mathbf{1}\right\rangle _{m,n\geq1}$
generated by singletons in each bidegree, let $\left(  F^{{\operatorname*{pre}
}}\left(  \Theta\right)  ,\gamma,\eta\right)  $ be the free prematrad
generated by $\Theta.$ Let $\mathfrak{F}=\left\langle \mathfrak{f}_{m}
^{n}\right\rangle _{m,n\geq1}$ be a free bigraded $R$-module generated by
singletons in each bidegree, and set $\mathcal{G}_{1,1}^{\operatorname*{pre}
}=\mathfrak{f}_{1}^{1}\mathbf{.}$ Inductively, if $m+n\geq3$ and
$\overline{\mathbf{C}}_{\left[  n,m\right]  }$ has been constructed, define
\[
\mathcal{G}_{n,m}^{{\operatorname*{pre}}}=\mathfrak{f}_{m}^{n}\cup\left\{
B_{1}\cdots B_{l}\cdot C\cdot A_{k}\cdots A_{1}\mid k+l\geq1;\text{ }
k,l\geq0\right\}  ,
\]
where

\begin{enumerate}
\item[\textit{(i)}] $A_{1}\in\mathbf{G}_{\mathbf{x}}^{q}$ and $B_{1}
\in\mathbf{G}_{p}^{\mathbf{y}}$ for $\left(  \left\vert \mathbf{x}\right\vert
,\left\vert \mathbf{y}\right\vert \right)  =\left(  m,n\right)  ;$
\vspace*{0.05in}

\item[\textit{(ii)}] $A_{i},B_{j}\in\overline{\mathbf{G}}_{\left[  n,m\right]
}$ for all $i,j;$\vspace*{0.05in}

\item[\textit{(iii)}] $C\in\overline{\mathbf{C}}_{\left[  n,m\right]  };$
and\vspace*{0.05in}

\item[\textit{(vii)}] some association of $B_{1}\cdots B_{l}\cdot C\cdot
A_{k}\cdots A_{1}$ defines a sequence of BTPs.
\end{enumerate}

\noindent Let $\sim$ be the equivalence relation on $\mathcal{G}
^{{\operatorname*{pre}}}=\mathcal{G}_{\ast,\ast}^{{\operatorname*{pre}}}$
generated by
\[
\left[  X_{ij}Y_{ij}\right]  \sim\left[  X_{ij}\right]  \left[  Y_{ij}\right]
\text{ iff }\left[  X_{ij}\right]  \times\left[  Y_{ij}\right]  \in
\overline{\mathbf{C}}\times\overline{\mathbf{G}}\cup\overline{\mathbf{G}
}\times\overline{\mathbf{G}}\cup\overline{\mathbf{G}}\times\overline
{\mathbf{C}}\,\text{ is a BTP},
\]
and let $F^{{\operatorname*{pre}}}\left(  \Theta,\mathfrak{F},\Theta\right)
=\left\langle \mathcal{G}^{{\operatorname*{pre}}}\diagup\sim\right\rangle .$
The \textbf{free relative prematrad generated by }$\Theta$ \textbf{and}
$\mathfrak{F}$ is the relative prematrad
\[
(F^{{\operatorname*{pre}}}(\Theta),F^{{\operatorname*{pre}}}\left(
\Theta,\mathfrak{F},\Theta\right)  ,F^{{\operatorname*{pre}}}(\Theta
),\lambda^{{\operatorname*{pre}}},\rho^{{\operatorname*{pre}}}),
\]
where $\lambda^{{\operatorname*{pre}}}$ and $\rho^{{\operatorname*{pre}}}$ are juxtaposition.
\end{definition}

\begin{example}
The module $F_{2,2}^{{\operatorname*{pre}}}(\Theta,\mathfrak{F},\Theta)$
contains 25 module generators, namely, the indecomposable $\mathfrak{f}
_{2}^{2}$ and the following $\left(  \lambda^{{\operatorname*{pre}}}
,\rho^{{\operatorname*{pre}}}\right)  $-decomposables:\newline Two of the form
$BCA$:
\begin{equation}
\left[  \theta_{1}^{2}\right]  \left[  \mathfrak{f}_{1}^{1}\right]  \left[
\theta_{2}^{1}\right]  \text{ and }\left[
\begin{array}
[c]{r}
\theta_{2}^{1}\medskip\\
\theta_{2}^{1}
\end{array}
\right]  \left[
\begin{array}
[c]{cc}
\mathfrak{f}_{1}^{1}\medskip & \mathfrak{f}_{1}^{1}\\
\mathfrak{f}_{1}^{1} & \mathfrak{f}_{1}^{1}
\end{array}
\right]  \left[
\begin{array}
[c]{ll}
\theta_{1}^{2} & \theta_{1}^{2}
\end{array}
\right]  . \label{relinner}
\end{equation}
Eleven of the form $CA_{k}\cdots A_{1}$:
\[
\left[  \mathfrak{f}_{1}^{2}\right]  \left[  \theta_{2}^{1}\right]  ,\text{
}\left[
\begin{array}
[c]{c}
\mathfrak{f}_{1}^{1}\medskip\\
\mathfrak{f}_{1}^{1}
\end{array}
\right]  \left[  \theta_{2}^{2}\right]  ,\text{ }\left[
\begin{array}
[c]{c}
\mathfrak{f}_{1}^{1}\medskip\\
\mathfrak{f}_{1}^{1}
\end{array}
\right]  \left[  \theta_{1}^{2}\right]  \left[  \theta_{2}^{1}\right]  ,\text{
}\left[
\begin{array}
[c]{c}
\mathfrak{f}_{1}^{1}\medskip\\
\mathfrak{f}_{1}^{1}
\end{array}
\right]  \left[
\begin{array}
[c]{r}
\theta_{2}^{1}\medskip\\
\theta_{2}^{1}
\end{array}
\right]  \left[
\begin{array}
[c]{ll}
\theta_{1}^{2} & \theta_{1}^{2}
\end{array}
\right]  ,\bigskip
\]
\[
\left[
\begin{array}
[c]{c}
\mathfrak{f}_{2}^{1}\medskip\\
\mathfrak{f}_{2}^{1}
\end{array}
\right]  \left[
\begin{array}
[c]{ll}
\theta_{1}^{2} & \theta_{1}^{2}
\end{array}
\right]  ,\text{ }\left[
\begin{array}
[c]{c}
\mathfrak{f}_{2}^{1}\medskip\\
\left[  \mathfrak{f}_{1}^{1}\right]  \left[  \theta_{2}^{1}\right]
\end{array}
\right]  \left[
\begin{array}
[c]{ll}
\theta_{1}^{2} & \theta_{1}^{2}
\end{array}
\right]  ,\text{ }\left[
\begin{array}
[c]{c}
\left[  \mathfrak{f}_{1}^{1}\right]  \left[  \theta_{2}^{1}\right]  \medskip\\
\mathfrak{f}_{2}^{1}
\end{array}
\right]  \left[
\begin{array}
[c]{ll}
\theta_{1}^{2} & \theta_{1}^{2}
\end{array}
\right]  ,\bigskip\text{ }
\]
\[
\left[
\begin{array}
[c]{c}
\left[  \theta_{2}^{1}\right]  \left[
\begin{array}
[c]{cc}
\mathfrak{f}_{1}^{1} & \mathfrak{f}_{1}^{1}
\end{array}
\right]  \medskip\\
\left[  \mathfrak{f}_{1}^{1}\right]  \left[  \theta_{2}^{1}\right]
\end{array}
\right]  \left[
\begin{array}
[c]{ll}
\theta_{1}^{2} & \theta_{1}^{2}
\end{array}
\right]  ,\text{ }\left[
\begin{array}
[c]{c}
\left[  \mathfrak{f}_{1}^{1}\right]  \left[  \theta_{2}^{1}\right]  \medskip\\
\left[  \theta_{2}^{1}\right]  \left[
\begin{array}
[c]{cc}
\mathfrak{f}_{1}^{1} & \mathfrak{f}_{1}^{1}
\end{array}
\right]
\end{array}
\right]  \left[
\begin{array}
[c]{ll}
\theta_{1}^{2} & \theta_{1}^{2}
\end{array}
\right]  ,\text{ }\bigskip
\]
\[
\left[
\begin{array}
[c]{c}
\left[  \theta_{2}^{1}\right]  \left[
\begin{array}
[c]{cc}
\mathfrak{f}_{1}^{1} & \mathfrak{f}_{1}^{1}
\end{array}
\right]  \medskip\\
\mathfrak{f}_{2}^{1}
\end{array}
\right]  \left[
\begin{array}
[c]{ll}
\theta_{1}^{2} & \theta_{1}^{2}
\end{array}
\right]  ,\text{ }\left[
\begin{array}
[c]{c}
\mathfrak{f}_{2}^{1}\medskip\\
\left[  \theta_{2}^{1}\right]  \left[
\begin{array}
[c]{cc}
\mathfrak{f}_{1}^{1} & \mathfrak{f}_{1}^{1}
\end{array}
\right]
\end{array}
\right]  \left[
\begin{array}
[c]{ll}
\theta_{1}^{2} & \theta_{1}^{2}
\end{array}
\right]  .\bigskip
\]
And eleven respective duals of the form $B_{1}\cdots B_{k}C.$

\begin{example}
\label{JJprime}Recall that the bialgebra prematrad $\mathcal{H}
^{{\operatorname*{pre}}}$ has two prematrad generators $c_{1,2}$ and
$c_{2,1},$ and a single module generator $c_{n,m}$ in bidegree $\left(
m,n\right)  $ (see \cite{SU4}). Consequently, the $\mathcal{H}
^{{\operatorname*{pre}}}$-bimodule $\mathcal{JJ}^{{\operatorname*{pre}}}$ has
a single bimodule generator $\mathfrak{f}$ of bidegree $(1,1)$, and a single
module generator in each bidegree satisfying the structure relations
\[
\lambda(c_{n,m};\underset{m}{\underbrace{\mathfrak{f},\ldots,\mathfrak{f}}
})=\rho(\underset{n}{\underbrace{\mathfrak{f},\ldots,\mathfrak{f}}};c_{n,m}).
\]
More precisely, if $\Theta=\left\langle \theta_{1}^{1}=\mathbf{1,}\,\theta
_{2}^{1},\,\theta_{1}^{2}\right\rangle $ and $\mathfrak{F}=\left\langle
\mathfrak{f=f}_{1}^{1}\right\rangle ,$ then
\[
\mathcal{JJ}^{{\operatorname*{pre}}}=F^{{\operatorname*{pre}}}\left(
\Theta,\mathfrak{F},\Theta\right)  /\sim,
\]
where $u\sim u^{\prime}$ if and only if $\operatorname*{bideg}
(u)=\operatorname*{bideg}(u^{\prime}).$ A bialgebra morphism $f:A\rightarrow
B$ is the image of $\mathfrak{f}$ under a map $\mathcal{JJ}
^{{\operatorname*{pre}}}\rightarrow U_{A,B}$ of relative prematrads.
\end{example}
\end{example}

\begin{example}\!\!
Whereas $F_{1,\ast}^{\operatorname*{pre}}(\Theta)$ and $F_{\ast,1}
^{\operatorname*{pre}}(\Theta)$ can be identified with the $A_{\infty}$-operad
${\mathcal{A}}_{\infty}$\! $($see \cite{SU4}$)$, $F_{1,\ast}(\Theta,{\mathfrak{F}
},\Theta)$ and $F_{\ast,1}\!(\Theta,{\mathfrak{F}},\Theta)$ can be identified
with the $\mathcal{A}_{\infty}$-bimodule ${\mathcal{J}}_{\infty}$ whose
bimodule generators are in 1-1 correspondence with $\{{\mathfrak{f}}_{m}
^{1}\}_{m\geq1}$ \!and $\{{\mathfrak{f}}_{1}^{n}\}_{n\geq1}\!,$ respectively. Thus
an $A_{\infty}$-(co)algebra morphism $f:A\rightarrow B$ is the image of the
$\mathcal{A}_{\infty}$-bimodule generators under a map $\mathcal{J}_{\infty
}\rightarrow Hom\left(  TA,B\right)  $ $($or $\mathcal{J}_{\infty}\rightarrow
Hom\left(  A,TB\right)  $\!\! $)$ of relative prematrads.
\end{example}

When $\Theta=\left\langle \theta_{m}^{n}\neq0\mid\theta_{1}^{1}=\mathbf{1}
\right\rangle _{m,n\geq1}$, and $\mathfrak{F}=\left\langle \mathfrak{f}_{m}
^{n}\neq0\right\rangle _{m,n\geq1},$ the canonical projections $\varrho
_{\Theta}^{{\operatorname*{pre}}}:F^{{\operatorname*{pre}}}(\Theta
)\rightarrow\mathcal{H}^{{\operatorname*{pre}}}$ and $\varrho
^{{\operatorname*{pre}}}:F^{{\operatorname*{pre}}}(\Theta,\mathfrak{F}
,\Theta)\rightarrow\mathcal{JJ}^{{\operatorname*{pre}}}$ give a map $\left(
\varrho_{\Theta}^{{\operatorname*{pre}}},\varrho^{{\operatorname*{pre}}
},\varrho_{\Theta}^{{\operatorname*{pre}}}\right)  $ of relative prematrads.
If $\partial^{{\operatorname*{pre}}}$ is a differential on
$F^{{\operatorname*{pre}}}(\Theta,\mathfrak{F},\Theta)$ such that
$\varrho^{{\operatorname*{pre}}}$ is a free resolution in the category of
relative prematrads, the induced isomorphism $\varrho^{{\operatorname*{pre}}
}:H_{\ast}\left(  F^{{\operatorname*{pre}}}(\Theta,\mathfrak{F},\Theta
),\partial^{{\operatorname*{pre}}}\right)  \approx\mathcal{JJ}
^{{\operatorname*{pre}}}$ implies
\[
\begin{array}
[c]{c}
\partial^{{\operatorname*{pre}}}(\mathfrak{f}_{1}^{1})=0\\
\\
\partial^{{\operatorname*{pre}}}(\mathfrak{f}_{2}^{1})=\rho(\mathfrak{f}
_{1}^{1};\theta_{2}^{1})-\lambda(\theta_{2}^{1};\mathfrak{f}_{1}
^{1},\mathfrak{f}_{1}^{1})\\
\\
\partial^{{\operatorname*{pre}}}(\mathfrak{f}_{1}^{2})=\rho(\mathfrak{f}
_{1}^{1},\mathfrak{f}_{1}^{1};\theta_{1}^{2})-\lambda(\theta_{1}
^{2};\mathfrak{f}_{1}^{1}).
\end{array}
\]
This gives rise to the standard isomorphisms
\[
\begin{array}
[c]{ccccccc}
F_{1,2}^{{\operatorname*{pre}}}(\Theta,\mathfrak{F},\Theta)\smallskip & = &
\left\langle \left[  \theta_{2}^{1}\right]  \right.  \left[
\begin{array}
[c]{cc}
\mathfrak{f}_{1}^{1} & \mathfrak{f}_{1}^{1}
\end{array}
\right]  & , & \left[  \mathfrak{f}_{2}^{1}\right]  & , & \left.  \left[
\mathfrak{f}_{1}^{1}\right]  \left[  \theta_{2}^{1}\right]  \right\rangle \\
\approx\text{ }\updownarrow\text{ \ }\smallskip &  & \updownarrow &  &
\updownarrow &  & \updownarrow\\
C_{\ast}(J_{2})\smallskip & = & \left\langle 1|2\right.  & , & 12 & , &
\left.  2|1\right\rangle \\
\approx\text{ }\updownarrow\text{ \ }\medskip &  & \updownarrow &  &
\updownarrow &  & \updownarrow\\
F_{2,1}^{{\operatorname*{pre}}}(\Theta,\mathfrak{F},\Theta) & = & \left\langle
\left[  \theta_{1}^{2}\right]  \left[  \mathfrak{f}_{1}^{1}\right]  \right.  &
, & \left[  \mathfrak{f}_{1}^{2}\right]  & , & \left[
\begin{array}
[c]{c}
\mathfrak{f}_{1}^{1}\medskip\\
\mathfrak{f}_{1}^{1}
\end{array}
\right]  \left.  \left[  \theta_{1}^{2}\right]  \right\rangle
\end{array}
\]
(see Figure 3). A similar application of $\partial^{{\operatorname*{pre}}}$ to
$\mathfrak{f}_{1}^{n}$ and $\mathfrak{f}_{m}^{1}$ gives the isomorphisms
\begin{equation}
F_{n,1}^{{\operatorname*{pre}}}(\Theta,\mathfrak{F},\Theta)\overset{\approx
}{\longrightarrow}\mathcal{J}_{\infty}(n)=C_{\ast}(J_{n}) \label{iso1}
\end{equation}
and
\begin{equation}
F_{1,m}^{{\operatorname*{pre}}}(\Theta,\mathfrak{F},\Theta)\overset{\approx
}{\longrightarrow}\mathcal{J}_{\infty}(m)=C_{\ast}(J_{m}) \label{iso2}
\end{equation}
(see \cite{Stasheff}, \cite{Stasheff2}, \cite{SU2}). As in the absolute case,
there is a differential $\partial$ on a canonical proper submodule
$\mathcal{J}\mathcal{J}_{\infty}\subset{F^{{\operatorname*{pre}}}
(\Theta,\mathfrak{F},\Theta)}$ such that the canonical projection
$\varrho:\mathcal{J}\mathcal{J}_{\infty}\rightarrow\mathcal{JJ}
^{{\operatorname*{pre}}}$ is a free resolution in the category of
\textquotedblleft relative matrads.\textquotedblright

\begin{definition}
Given local prematrads $\left(  {M},\gamma_{\mathbf{W}_{M}},\eta_{M}\right)  $
and $\left(  {N},\gamma_{\mathbf{W}_{N}},\eta_{N}\right)  ,$ a bigraded
$R$-module $E=\left\{  E_{n,m}\right\}  _{m,n\geq1},$ a telescoping submodule
$\mathbf{W}_{E}\subseteq\mathbf{V}_{E}$, and actions $\lambda=\{\lambda
_{\mathbf{x}}^{\mathbf{y}}:\left(  \mathbf{W}_{M}\right)  _{p}^{\mathbf{y}
}\otimes\left(  \mathbf{W}_{E}\right)  _{\mathbf{x}}^{q}\rightarrow\left(
\mathbf{W}_{E}\right)  _{\left\vert \mathbf{x}\right\vert }^{\left\vert
\mathbf{y}\right\vert }\}$ and $\rho=\{\rho_{\mathbf{x}}^{\mathbf{y}}:\left(
\mathbf{W}_{E}\right)  _{p}^{\mathbf{y}}\otimes\left(  \mathbf{W}_{N}\right)
_{\mathbf{x}}^{q}\rightarrow\left(  \mathbf{W}_{E}\right)  _{\left\vert
\mathbf{x}\right\vert }^{\left\vert \mathbf{y}\right\vert }\},$ the tuple
$\left(  {M},{E},{N},\lambda,\rho\right)  $ is a \textbf{relative local
prematrad} \textbf{with domain} $\left(  \mathbf{W}_{M},\mathbf{W}
_{E},\mathbf{W}_{N}\right)  $ if

\begin{enumerate}
\item[\textit{(i)}] $\left(  \mathbf{W}_{E}\right)  _{\mathbf{x}}
^{1}=\mathbf{E}_{\mathbf{x}}^{1}$ and $\left(  \mathbf{W}_{E}\right)
_{1}^{\mathbf{y}}=\mathbf{E}_{1}^{\mathbf{y}}$ for all $\mathbf{x,y};$

\item[\textit{(ii)}] $\Upsilon_{M},\Upsilon_{N},\Upsilon_{\lambda}
,\Upsilon_{\rho}$ interact associatively on $\mathcal{W}_{E}\cap\mathbf{E};$

\item[\textit{(iii)}] the relative prematrad unit axiom holds for $\lambda$
and $\rho$.
\end{enumerate}
\end{definition}

Let
\[
\overline{\mathbf{E}}_{{\operatorname*{row}}}=\bigoplus_{\bar{X}
,Y\in\mathbb{N}^{q\times p};\text{ }p,q\geq1}E_{Y,\bar{X}}\text{
\ and\ \ }\overline{\mathbf{E}}^{{\operatorname*{col}}}=\bigoplus_{X,\bar
{Y}\in\mathbb{N}^{q\times p};\text{ }p,q\geq1}E_{\bar{Y},X},
\]
where $\bar{X}$ has constant columns and $\bar{Y}$ has constant rows. If $A$
is a $q\times p$ monomial in $E_{Y,\bar{X}}$, we refer to a row $\mathbf{x}$
of $\bar{X}$ as the \emph{row leaf sequence} of $A$, and write
${{\operatorname*{rls}}}(A)=\mathbf{x};$ if $B$ is a $q\times p$ monomial in
$E_{\bar{Y},X}$, we refer to a column $\mathbf{y}$ of $\bar{Y}$ as the
\emph{column leaf sequence} of $B$, and write ${{\operatorname*{cls}}
}(B)=\mathbf{y}.$ Given a telescoping submodule $\mathbf{W\subseteq V}_{E}$
and its telescopic extension $\mathcal{W}_{E},$ let $\left(  \mathcal{W}
_{E}\right)  _{{\operatorname*{row}}}=\mathcal{W}_{E}\cap\overline{\mathbf{E}
}_{{\operatorname*{row}}}$ and $\left(  \mathcal{W}_{E}\right)
^{{\operatorname*{col}}}=\mathcal{W}_{E}\cap\overline{\mathbf{E}
}^{{\operatorname*{col}}}.$

\begin{definition}
Let $(M,E,N,\lambda,\rho)$ be a relative local prematrad with domain $\left(
\mathbf{W}_{M},\right.  $ $\left.  \mathbf{W}_{E},\mathbf{W}_{N}\right)  ,$
let $m,n\geq1,$ let $\mathfrak{f}\in E_{\ast,m}$, and let $\mathfrak{g}\in
E_{n,\ast}$.

\begin{enumerate}
\item[\textit{(i)}] A \textbf{row factorization of} $\mathfrak{f}$
\textbf{with respect to} $\left(  \mathbf{W}_{M},\!\mathbf{W}_{E},\!\mathbf{W}
_{N}\right)  $ is an $(\Upsilon_{\!\lambda},\!\!\Upsilon_{\rho})$-factorization
\[
A_{1}\cdots A_{s}\cdot C\cdot A_{s+1}^{\prime}\cdots A_{k}^{\prime
}=\mathfrak{f}
\]
such that $A_{i}\!\in\!(\mathcal{W}_{M})_{{\operatorname*{row}}},$ $C\!\in\!
(\mathcal{W}_{E})_{{\operatorname*{row}}},$ $A_{j}^{\prime}\!\in\!(\mathcal{W}
_{N})_{{\operatorname*{row}}}$ and $\operatorname*{rls}({A}_{i}
),\operatorname*{rls}({A}_{j}^{\prime})\neq\mathbf{1}$ for all $i,j.$ The
$(m+1,k+1)$-\textbf{row descent sequence} of $\mathfrak{f}$ is the $\left(
k+1\right)  $-tuple of row vectors
\[
\mathbf{m=}({\operatorname*{rls} }(A_{1}
),...,{\operatorname*{rls}}(A_{s}),\left(  1,0,...,0\right)
+{\operatorname*{rls}}(C),\left(  1,{\operatorname*{rls}}(A_{s+1}^{\prime
})\right)  ,...,\left(  1,{\operatorname*{rls}}(A_{k}^{\prime})\right)  ).
\]

\item[\textit{(ii)}] \textit{A }\textbf{column factorization of }
$\mathfrak{g}$\textbf{\ with respect to }$\left(  \mathbf{W}_{M}
,\mathbf{W}_{E},\mathbf{W}_{N}\right)  $\textit{\ is an }$(\Upsilon_{\lambda
},\Upsilon_{\rho})$\textit{-factorization}
\[
B_{l}^{\prime}\cdots B_{t+1}^{\prime}\cdot D\cdot B_{t}\cdots B_{1}
=\mathfrak{g}
\]
\textit{such that} $B_{i}^{\prime}\in(\mathcal{W}_{M})^{{\operatorname*{col}}
},$ $D\in(\mathcal{W}_{E})^{{\operatorname*{col}}},$ $B_{j}\in(\mathcal{W}
_{N})^{{\operatorname*{col}}}$ and $\operatorname*{cls}(B_{i}^{\prime
}),\operatorname*{cls}(B_{j})\neq\mathbf{1}$ for all $i,j.$ The $\left(
n+1,l+1\right)  $-\textbf{column descent sequence of} $\mathfrak{g}$ is the
$(l+1)$-tuple of column vectors
\[
\mathbf{n}=((\operatorname*{cls} (B_{l}^{\prime}),1)^{T}
\!\!,\!...,(\operatorname*{cls}(B_{t+1}^{\prime} ),1)^{T}
\!\!,\operatorname*{cls}(D)+(0,\!...,0,1)^{T}\!\!,\operatorname*{cls}
(B_{t}),\!...,\operatorname*{cls}(B_{1})).
\]

\end{enumerate}
\end{definition}

\noindent Column and row factorizations are not unique. Note that
$\mathfrak{f}\in E_{n,m}$ always has trivial row and column factorizations as
a $1\times1$ matrix $C=\left[  \mathfrak{f}\right]  $.

Given elements $\mathfrak{f}\in E_{\ast,m}\,$and $\mathfrak{g}\in E_{n,\ast}$
with $m,n\geq1,$ choose a row factorization $A_{1}\cdots A_{s}\cdot C\cdot
A_{s+1}^{\prime}\cdots A_{k}^{\prime}$ of $\mathfrak{f}$, and a column
factorization $B_{l}^{\prime}\cdots B_{t+1}^{\prime}\cdot D\cdot B_{t}\cdots
B_{1}$ of $\mathfrak{g}.$ The related row descent sequence $\mathbf{m}$
identifies $\mathfrak{f}$ with an up-rooted $\left(  m+1\right)  $-leaf,
$\left(  k+1\right)  $-level PLT, and hence with a codimension $k$ face
$\overset{_{\wedge}}{e}_{\mathfrak{f}}$ of $P_{m}\mathbf{.}$ Dually, the
related column descent sequence $\mathbf{n}$ identifies $\mathfrak{g}$ with a
down-rooted $\left(  n+1\right)  $-leaf, $\left(  l+1\right)  $-level PLT, and
hence with a codimension $l$ face $\overset{_{\vee}}{e}_{\mathfrak{g}}$ of
$P_{n}.$ Extending to Cartesian products, identify the monomials
$F=\mathfrak{f}_{1}\otimes\cdots\otimes\mathfrak{f}_{q}\in\left(  E_{\ast
,m}\right)  ^{\otimes q}$ and $G=\mathfrak{g}_{1}\otimes\cdots\otimes
\mathfrak{g}_{p}\in\left(  E_{n,\ast}\right)  ^{\otimes p}$ with the product
cells
\[
\overset{_{\wedge}}{e}_{F}=\overset{_{\wedge}}{e}_{\mathfrak{f}_{1}}
\times\cdots\times\overset{_{\wedge}}{e}_{\mathfrak{f}_{q}}\subset
P_{m}^{\times q}\ \ \text{and}\ \ \overset{_{\vee}}{e}_{G}=\overset{_{\vee
}}{e}_{\mathfrak{g}_{1}}\times\cdots\times\overset{_{\vee}}{e}_{\mathfrak{g}
_{p}}\subset P_{n}^{\times p}.
\]

As in the absolute case reviewed in Section 2, the product cell
$\overset{\wedge}{e}_{F}$ either\emph{\ is} or \emph{is not} a subcomplex of
${\Delta}^{(q-1)}(P_{m})\subset P_{m}^{\times q}$, and dually for
$\overset{\vee}{e}_{G}.$ Recall that $\mathbf{x\times y}\in\mathbb{N}^{1\times
p}\times\mathbb{N}^{q\times1}$, and let
\[
r\Gamma({E})=E\oplus\bigoplus_{\mathbf{x,y\notin}\mathbb{N};\text{ }s,t\geq
1}r\Gamma_{s}^{\mathbf{y}}({E})\oplus r\Gamma_{\mathbf{x}}^{t}({E}),
\]
where
\begin{align*}
r\Gamma_{s}^{\mathbf{y}}({E})  &  =\left\langle F\in\mathbf{E}_{s}
^{\mathbf{y}}\mid\overset{_{\wedge}}{e}_{F}\subset{\Delta}^{\left(
q-1\right)  }\left(  P_{s}\right)  \right\rangle ,\\
r\Gamma_{\mathbf{x}}^{t}({E})  &  =\left\langle G\in\mathbf{E}_{\mathbf{x}
}^{t}\mid\overset{_{\vee}}{e}_{G}\subset{\Delta}^{\left(  p-1\right)  }\left(
P_{t}\right)  \right\rangle .
\end{align*}

\begin{definition}
The \textbf{(left) configuration module} of a relative local prematrad
$({M},{E},{N},\lambda,\rho)$ with domain $(\mathbf{W}_{M},\mathbf{W}
_{E},\mathbf{W}_{N})$ is the triple
\[
\left(  \Gamma({M,\gamma}_{\mathbf{w}_{M}}),\text{ }r\Gamma({E}),\text{
}\Gamma({N,\gamma}_{\mathbf{W}_{N}})\right)  .
\]

\end{definition}

When $s=t=1$, the arguments in the absolute case carry over verbatim and give
\[
\bigoplus_{\mathbf{x}}r{\Gamma}_{\mathbf{x}}^{1}(E)=T^{+}({E}_{1,\ast})\text{
\ and \ }\bigoplus_{\mathbf{y}}r{\Gamma}_{1}^{\mathbf{y}}({E})=T^{+}({E}
_{\ast,1}).
\]

\begin{definition}
Let $\left(  M,\gamma_{\mathbf{w}_{M}}\right)  $ and $\left(  N,\gamma
_{\mathbf{w}_{N}}\right)  $ be (left) matrads. A relative local prematrad
$({M},{E},{N},\lambda,\rho)$ with domain $\left(  \mathbf{W}_{M}
,\mathbf{W}_{E},\mathbf{W}_{N}\right)  $ is a \textbf{relative
(left)\thinspace matrad} if $r\Gamma({E})=\mathbf{W}_{E}.$ When $M=N$ we refer
to $r\Gamma({E})$ as a $\Gamma(M)$\textbf{-bimodule}. A \textbf{morphism of
relative matrads} is a map of underlying relative local prematrads.
\end{definition}

A relative prematrad $(M,E,N,\rho,\lambda)$ with domain $\left(
\mathbf{W}_{M},\mathbf{W}_{E},\mathbf{W}_{N}\right)  $ restricts to a relative
matrad structure with domain $(\Gamma(M),r\Gamma(E),\Gamma(N)).$

\begin{example}
\textbf{The Bialgebra Morphism Matrad} $\mathcal{J}{\mathcal{J}}.$ The
$\mathcal{H}^{{\operatorname*{pre}}}$-bimodule $\mathcal{J}{\mathcal{J}
}^{{\operatorname*{pre}}}$ discussed in Example \ref{JJprime} satisfies
\[
r\Gamma_{p}^{\mathbf{y}}(E)\otimes\Gamma_{\mathbf{x}}^{q}(M)=\mathbf{E}
_{p}^{\mathbf{y}}\otimes\mathbf{M}_{\mathbf{x}}^{q}\ \ \text{and}
\ \ \Gamma_{s}^{\mathbf{v}}(M)\otimes r\Gamma_{\mathbf{u}}^{t}(E)=\mathbf{M}
_{s}^{\mathbf{v}}\otimes\mathbf{E}_{\mathbf{u}}^{t}.
\]
Consequently, $\mathcal{J}{\mathcal{J}}^{{\operatorname*{pre}}}$ is also a
relative matrad, called \textbf{the bialgebra morphism matrad}, and is
henceforth denoted by $\mathcal{J}{\mathcal{J}}.$
\end{example}

\begin{definition}
Let $\Theta=\left\langle \theta_{m}^{n}\mid\theta_{1}^{1}=\mathbf{1}
\right\rangle _{m,n\geq1}$ and $\mathfrak{F}=\left\langle \mathfrak{f}_{m}
^{n}\right\rangle _{m,n\geq1}.$ Let $F^{{\operatorname*{pre}}}
=F^{{\operatorname*{pre}}}(\Theta)$, and consider the free
$F^{{\operatorname*{pre}}}$-bimodule $E^{{\operatorname*{pre}}}
=F^{{\operatorname*{pre}}}\left(  \Theta,\mathfrak{F},\Theta\right)  .$ The
\textbf{free relative matrad generated by }$(\Theta,\mathfrak{F})$
\textbf{with domain} $(\mathbf{W}_{F(\Theta)},\mathbf{W}_{E},\mathbf{W}
_{F(\Theta)})$ is the tuple $\left(  F\left(  \Theta\right)  ,F\left(
\Theta,\mathfrak{F},\Theta\right)  ,F\left(  \Theta\right)  ,\lambda
,\rho\right)  $, where
\[
F\left(  \Theta,\mathfrak{F},\Theta\right)  =\lambda^{{\operatorname*{pre}}
}\left[  \Gamma_{p}^{\mathbf{y}}(F^{{\operatorname*{pre}}});r\Gamma
_{\mathbf{x}}^{q}(E^{{\operatorname*{pre}}})\right]  \oplus\rho
^{{\operatorname*{pre}}}\left[  r\Gamma_{p}^{\mathbf{y}}
(E^{{\operatorname*{pre}}});\Gamma_{\mathbf{x}}^{q}(F^{{\operatorname*{pre}}
})\right]  ,
\]
\[
\lambda=\lambda^{{\operatorname*{pre}}}|_{_{\mathbf{W}_{F(\Theta)}
\otimes\mathbf{W}_{E}}},\text{ \ }\rho=\rho^{{\operatorname*{pre}}
}|_{_{\mathbf{W}_{E}\otimes\mathbf{W}_{F(\Theta)}}},\text{ \ and}
\]
\[
\mathbf{W}_{E}=\bigoplus_{p,q\in\mathbb{N};\text{ }\mathbf{x,y\notin
}\mathbb{N}}F_{q,p}(\Theta,\mathfrak{F},\Theta)\oplus\Gamma_{p}^{\mathbf{y}
}(E^{{\operatorname*{pre}}})\oplus\Gamma_{\mathbf{x}}^{q}
(E^{{\operatorname*{pre}}}).
\]

\end{definition}

For example, the monomials in (\ref{relinner}) are two of the 17 module
generators in $F_{2,2}\left(  \Theta,\mathfrak{F},\Theta\right)  $ identified
with the faces of the octagon $JJ_{2,2}$ (see Figure 4). In general,
$F_{1,m}\left(  \Theta,\mathfrak{F},\Theta\right)  =F_{1,m}
^{{\operatorname*{pre}}}\left(  \Theta,\mathfrak{F},\Theta\right)  $, and
$F_{n,1}\left(  \Theta,\mathfrak{F},\Theta\right)  =F_{n,1}
^{{\operatorname*{pre}}}\left(  \Theta,\mathfrak{F},\Theta\right)  $ for all
$m,n\geq1.$

\begin{example}
\textbf{The }$A_{\infty}$\textbf{-bialgebra Morphism Matrad.} Let
$\Theta=\! \left\langle \theta_{m}^{n}|
\theta_{1}^{1}\!=\!\mathbf{1}\right\rangle _{\!m,n\geq1}\!,$ and $\mathfrak{F}
=\left\langle \mathfrak{f}_{m}^{n}\right\rangle _{m,n\geq1},$ where
$\left\vert \mathfrak{f}_{m}^{n}\right\vert =m+n-2$. Following the
construction in the absolute case, let $r\mathcal{C}$ be the set indexing the
module generators $F(\Theta,\mathfrak{F},\Theta)\mathfrak{,}$ and let
$\mathcal{A}\mathcal{R}_{m,n}\subset\mathcal{C}_{m,n}\times r\mathcal{C}
_{m,n}$ and $\mathcal{Q}\mathcal{B}_{m,n}\subset r\mathcal{C}_{m,n}
\times\mathcal{C}_{m,n}$ be the subsets of $r\mathcal{C}$ that index the
codimension $1$ elements of $F_{n,m}(\Theta,\mathfrak{F},\Theta)$ of the form
$\lambda(-;-)$ and $\rho(-;-),$ respectively. Let $\left\{  ({A}
_{p}^{\mathbf{y}})_{\alpha}\right\}  $ and $\left\{  ({B}_{\mathbf{x}}
^{q})_{\beta}\right\}  $ be the bases defined in Example 19 of \cite{SU4}, and
let $\left\{  ({Q}_{s}^{\mathbf{\ v}})_{\nu}\right\}  _{\nu\in\mathcal{Q}
_{s}^{\mathbf{v}}}$ and $\left\{  ({R}_{\mathbf{u}}^{t})_{\mu}\right\}
_{\mu\in\mathcal{R}_{\mathbf{u}}^{t}}$ be the analogous bases for $r{\Gamma
}_{s}^{\mathbf{v}}(E)$ and $r{\Gamma}_{\mathbf{u}}^{t}(E)$ in dimensions
$|\mathbf{v}|+s-t$ and $|\mathbf{u}|+t-s;$ then each $\overset{\wedge
}{e}_{Q_{\nu}}$ is a subcomplex of $\Delta^{\left(  t-1\right)  }\left(
P_{s}\right)  $ with associated sign $\left(  -1\right)  ^{\epsilon_{\nu}}$
and each $\overset{\vee}{e}_{R_{\mu}}$ is a subcomplex of $\Delta^{\left(
s-1\right)  }\left(  P_{t}\right)  $ with associated sign $\left(  -1\right)
^{\epsilon_{\mu}}.$ Now consider codimension $1$ face $e_{\left(
\mathbf{y,x}\right)  }=C|D\subset P_{m+n-2}$ defined as follows: If
$\left\vert \mathbf{x}\right\vert =m>p\geq2,$ let $A_{\mathbf{x}
}|B_{\mathbf{x}}$ be the codimension $1$ face of $P_{m-1}$ with leaf sequence
$\mathbf{x};$ dually, if $\left\vert \mathbf{y}\right\vert =n>q\geq2,$ let
$A_{\mathbf{y}}|B_{\mathbf{y}}$ be the codimension $1$ face of $P_{n-1}$ with
leaf sequence $\mathbf{y}$. If $A=\left\{  a_{1},\ldots,a_{r}\right\}
\subset\mathbb{Z}$ and $z\in\mathbb{Z}$, define $-A=\left\{  -a_{1}
,\ldots,-a_{n}\right\}  $ and $A+z=\left\{  a_{1}+z,\ldots,a_{r}+z\right\}  ;$
then set
\[
\begin{array}
[c]{ll}
A_{1}=\left\{
\begin{array}
[c]{cl}
\underline{m-1}, & \text{if }\mathbf{x}=\mathbf{1}^{m},\,m\geq1\medskip\\
\varnothing, & \text{if }\mathbf{x}=m\geq2\medskip\\
-B_{\mathbf{x}}+m, & \text{otherwise,}
\end{array}
\right.  & A_{2}=\left\{
\begin{array}
[c]{cl}
\varnothing, & \text{if }\mathbf{y}=\mathbf{1}^{n},\,n\geq1\medskip\\
\underline{n-1}, & \text{if }\mathbf{y}=n\geq2\medskip\\
A_{\mathbf{y}}, & \text{otherwise,}
\end{array}
\right. \\
& \\
B_{1}=\left\{
\begin{array}
[c]{cl}
\varnothing, & \text{if }\mathbf{x}=\mathbf{1}^{m},\,m\geq1\medskip\\
\underline{m-1}, & \text{if }\mathbf{x}=m\geq2\medskip\\
-A_{\mathbf{x}}+m, & \text{otherwise,}
\end{array}
\right.  & B_{2}=\left\{
\begin{array}
[c]{cl}
\underline{n-1}, & \text{if }\mathbf{y}=\mathbf{1}^{n},\,n\geq1\medskip\\
\varnothing, & \text{if }\mathbf{y}=n\geq2\medskip\\
B_{\mathbf{y}}, & \text{otherwise,}
\end{array}
\right.
\end{array}
\]
and define
\begin{equation}
\label{cell}e_{(\mathbf{y},\mathbf{x})}=A_{1}\cup(A_{2}+m-1)\mid B_{1}
\cup(B_{2}+m-1)
\end{equation}
(this corrects the analogous formula in line (13) of \cite{SU4} in which the
symbols $A_{\mathbf{x}}$ and $B_{\mathbf{x}}$ are reversed). Define a
differential $\partial:F(\Theta,\mathfrak{F},\Theta)\rightarrow F(\Theta
,\mathfrak{F} ,\Theta)$ of degree $-1$ on generators by
\begin{equation}
\begin{array}
[c]{r}
\partial(\mathfrak{f}_{m}^{n})=\hspace*{-0.1in}\sum\limits_{\substack{(\alpha
,\mu)\in\mathcal{A}\mathcal{R}_{m,n}\\\text{ }}}\hspace*{-0.2in}
(-1)^{\epsilon_{1}+\epsilon_{\alpha}+\epsilon_{\mu}}\lambda\left[  ({A}
_{p}^{\mathbf{y}})_{\alpha};({R}_{\mathbf{x}}^{q})_{\mu}\right]  \hspace
*{1in}\\
\hspace*{0.8in}+\sum\limits_{\substack{(\nu,\beta)\in\mathcal{Q}
\mathcal{B}_{m,n}\\\text{ }}}\hspace*{-0.2in}(-1)^{\epsilon_{2}+\epsilon_{\nu
}+\epsilon_{\beta}}\rho\left[  ({Q}_{p}^{\mathbf{y}})_{\nu};({B}_{\mathbf{x}
}^{q})_{\beta}\right]  ,
\end{array}
\label{reldiff}
\end{equation}
where $(-1)^{\epsilon_{1}}$ is the sign associated with $e_{(\mathbf{y}
,\mathbf{x})}$, and $\epsilon_{2}=\#D+\epsilon_{1}+1$. Extend $\partial$ as a
derivation of $\rho$ and $\lambda$; then $\partial^{2}=0$ follows from the
associativity of $\rho$ and $\lambda$. The $\mathcal{A}_{\infty}
$-\textbf{bialgebra morphism matrad is the }DG $\mathcal{H}_{\infty}$-bimodule
$\mathcal{J}\mathcal{J}_{\infty}=\left(  F\left(  \Theta,\mathfrak{F}
,\Theta\right)  ,\partial\right)  $\textbf{.}
\end{example}

We realize the $\mathcal{A}_{\infty}$-bialgebra morphism matrad by the
cellular chains of a new family of polytopes $JJ=\sqcup_{m,n\geq1}JJ_{n,m}$,
called \emph{bimultiplihedra}, to be constructed in the next section. The
standard isomorphisms (\ref{iso1}) and (\ref{iso2}) extend to isomorphisms
\begin{equation}
(\mathcal{J}\mathcal{J}_{\infty})_{n,m}\overset{\approx}{\longrightarrow
}C_{\ast}(JJ_{n,m}), \label{JJ}
\end{equation}
and one recovers $\mathcal{J}_{\infty}$ by restricting the differential
$\partial$ to $(\mathcal{J}\mathcal{J}_{\infty})_{1,\ast}$ or $(\mathcal{J}
\mathcal{J}_{\infty})_{\ast,1}.$

\section{Bimultiplihedra}

In this section we construct the bimultiplihedron $JJ_{n,m}$ as a subdivision
of the cylinder $KK_{n,m}\times I.$ Whereas our construction of $KK_{n+1,m+1}$
uses the combinatorics of $P_{m+n},$ our construction of $JJ_{n+1,m+1}$ uses
the combinatorics of $P_{m+n+1}$ thought of as a subdivision of $P_{m+n}\times
I$ (see \cite{SU2}), or equivalently, as the combinatorial join $P_{m+n+1}
=P_{m+n}\ast_{c}P_{1}$, which corresponds algebraically to adjoining the
parameter $\mathfrak{f}_{1}^{1}$ to the 0-dimensional elements of the free
matrad $\mathcal{H}_{\infty}$ (recall that $\dim P_{m+n+1}=\dim(P_{m+n}
\ast_{c}P_{1})=\dim P_{m+n}+1;$ see \cite{SU4}). Indeed, our construction of
$JJ_{n+1,m+1}$ as the geometric realization of a poset $r{\mathcal{PP}}
_{n,m}\diagup\sim$ resembles our construction of $KK_{n+1,m+1}=\left\vert
{\mathcal{PP}}_{n,m}\diagup\sim\right\vert $ in \cite{SU4}, but with some
technical differences.

Recall that faces of the permutahedron $P_{n}$ are indexed by up-rooted (or
down-rooted) PLTs. In particular, the vertices of $P_{n}$ are indexed by the
set $\wedge_{n}$ ($\vee_{n}$) of all up-rooted (down-rooted) planar binary
trees with $n+1$ leaves and $n$ levels. Since vertices of $P_{n}$ are
identified with permutations of $\underline{n}=\left\{  1,2,\ldots,n\right\}
,$ the Bruhat partial ordering, generated by $a_{1}|\cdots|a_{n}<a_{1}
|\cdots|a_{i+1}|a_{i}|\cdots|a_{n}$ if and only if $a_{i}<a_{i+1},$ induces
natural poset structures on $\wedge_{n}$ and $\vee_{n}.$

In \cite{SU4} we introduced the subposet $X_{m}^{n}\subseteq\wedge_{m}^{\times
n},$ which indexes the vertices of the subcomplex $\Delta^{\left(  n-1\right)
}\left(  P_{m}\right)  \subseteq P_{m}^{\times n};$ an element $x\in X_{m}
^{n}$ is expressed as a column matrix $x=[\hat{T}_{1}\cdots\hat{T}_{n}]^{T}$
of $n$ up-rooted binary trees with $m+1$ leaves and $m$ levels. Let
$\curlywedge$ ($\curlyvee$) denote the up-rooted (down-rooted) $2$-leaf
corolla. Now if $a\in\wedge_{n},$ there exists a unique BTP\ $\Upsilon
$-factorization $a=a_{1}\cdots a_{n}$ such that\ $a_{j}$ is a $1\times j$ row
matrix over $\left\{  \mathbf{1,}\curlywedge\right\}  $ containing the entry
$\curlywedge$ exactly once. Thus by factoring each $\hat{T}_{i},$ we obtain
the BTP\ $\Upsilon$-factorization $x=x_{1}\cdots x_{m}$ in which $x_{j}$ is an
$n\times j$ matrix. Below we distinguish two matrices $x_{\alpha}$ and
$x_{\beta}$ that correspond to the first and last matrices of a $0$
-dimensional element of $\mathcal{JJ}_{\infty}$ containing the entry
$\mathfrak{f}_{1}^{1}$ as follows.

Let $\varrho\left(  x,i\right)  $ be the \emph{lowest level} of $\hat{T}_{i}$
in which a branch is attached on the extreme left. Given positive integers
$\alpha\leq\beta,$ consider the set of \emph{bitruncated elements}
\begin{align*}
X_{m}^{n}(\alpha,\beta)  &  =\left\{  _{\alpha}x_{\beta}=x_{\alpha}\cdots
x_{\beta}\mid x=x_{1}\cdots x_{\alpha}\cdots x_{\beta}\cdots x_{m}\in{X}
_{m}^{n},\right. \\
&  \alpha=\min_{1\leq i\leq n}\varrho(x,i)\text{ and }\beta=\max_{1\leq i\leq
n}\varrho(x,i)\}.
\end{align*}
Define $_{\alpha}x_{\beta}<$ $_{\alpha^{\prime}}x_{\beta^{\prime}}^{\prime}$
if and only if $x<x^{\prime}$ for some $x=x_{1}\cdots x_{\alpha}\cdots
x_{\beta}\cdots x_{m}$ and $x^{\prime}=x_{1}^{\prime}\cdots x_{\alpha^{\prime
}}^{\prime}\cdots x_{\beta^{\prime}}^{\prime}\cdots x_{m}^{\prime}$ in
$X_{m}^{n};$ then $X_{m}^{n}(\alpha,\beta)$ is the \emph{poset of}
\emph{bitruncated elements}$.$

Now replace the entries $\mathbf{1}$ and $\curlywedge$ in each $x_{j}$ with
the integers $1$ and $2,$ respectively. Then the $\left(  m+1,m\right)  $-row
descent sequence $C_{i}=\left(  \mathbf{x}_{i,1}=\left(  2\right)
,\mathbf{x}_{i,2},\ldots,\mathbf{x}_{i,m}\right)  $ of $\hat{T}_{i}$ appears
as the $i^{th}$ rows of $x_{1},\ldots,x_{m}$, and $\varrho=\varrho(x,i)$ is
the largest integer such that $\mathbf{x}_{i,\varrho}=(2,1,...,1).$ Introduce
the decorations $\mathbf{\mathring{x}}_{i,\varrho}=({\mathring{2}}
,{\mathring{1}},...,{\mathring{1}})$ and $\mathbf{\dot{x}}_{i,j}=\left(
{\dot{1}},1,\ldots,2,1,\ldots\right)  $ for $j>\varrho,$ and obtain the
corresponding $\left(  m+1,m\right)  $-\emph{marked row descent sequence }
\[
\hat{C}_{i}=\left(  \mathbf{x}_{i,1},\ldots,\mathbf{x}_{i,\varrho
-1},\mathbf{\mathring{x}}_{i,\varrho},\mathbf{\dot{x}}_{i,\varrho+1}
,\ldots,\mathbf{\dot{x}}_{i,m}\right)  .
\]
The term $\mathbf{\mathring{x}}_{i,\varrho}$ represents the constant
$1\times\varrho$ matrix $\left[  \mathfrak{f}_{1}^{1}\cdots\mathfrak{f}
_{1}^{1}\right]  .$ Let $\mathring{x}$ denote the matrix string $x=x_{1}\cdots
x_{m}$ with decorated integer entries; then the poset of\emph{\ marked
bitruncated} \emph{elements }is
\[
\mathring{X}_{m}^{n}(\alpha,\beta)=\left\{  _{\alpha}{\mathring{x}}_{\beta
}\mid_{\alpha}x_{\beta}\in X_{m}^{n}(\alpha,\beta)\right\}  .
\]
Note that\emph{\ }$\varrho(x,i)$ is constant for all $i\ $if and only if
$\alpha=\beta,$ in which case $\mathring{X}_{m}^{n}(\alpha,\alpha)$ is a
singleton set containing the $n\times\alpha$ matrix
\[
_{\alpha}\mathring{x}_{\alpha}=\left[
\begin{array}
[c]{c}
\mathbf{\mathring{x}}_{1,\alpha}\\
\vdots\\
\mathbf{\mathring{x}}_{n,\alpha}
\end{array}
\right]  =\left[
\begin{array}
[c]{cccc}
\mathring{2} & \mathring{1} & \cdots & \mathring{1}\\
\vdots\medskip & \vdots &  & \vdots\\
\mathring{2} & \mathring{1} & \cdots & \mathring{1}
\end{array}
\right]  ,
\]
which represents the constant matrix $\left[  \mathfrak{f}_{1}^{1}\right]
^{n\times\alpha}$.

Given $_{\alpha}{\mathring{x}}_{\beta}\in\mathring{X}_{m}^{n}(\alpha,\beta),$
consider the marked sequence of $i^{th}$ rows
\[
\left(  \mathbf{x}_{i,\alpha},\ldots,\mathbf{x}_{i,\varrho-1}
,\mathbf{\mathring{x}}_{i,\varrho},\mathbf{\dot{x}}_{i,\varrho+1}
,\ldots,\mathbf{\dot{x}}_{i,\beta}\right)  .
\]
Let $\mathbf{x}_{i,k}^{\prime}$ denote the vector obtained from $\mathbf{\dot
{x}}_{i,k}$ by deleting the marked entry $\dot{1},$ and form the (unmarked)
row-descent sequence
\[
(\mathbf{x}_{i,\alpha}^{\prime\prime},...,\mathbf{x}_{i,\beta-1}^{\prime
\prime})=(\mathbf{x}_{i,\alpha},...,\mathbf{x}_{i,\varrho-1},\mathbf{x}
_{i,\varrho+1}^{\prime},...,\mathbf{x}_{i,\beta}^{\prime}).
\]
Then $_{\alpha}x_{\beta-1}^{\prime\prime}$ denotes the bitruncated element
whose $i^{th}$ rows are $\mathbf{x}_{i,\alpha}^{\prime\prime},...,\mathbf{x}
_{i,\beta-1}^{\prime\prime}$.

\begin{example}
Consider the following element $x\in X_{6}^{2}$ and its BTP factorization:
\[
x=
\raisebox{-0.3892in}{\includegraphics[
height=0.8605in,
width=0.691in
]
{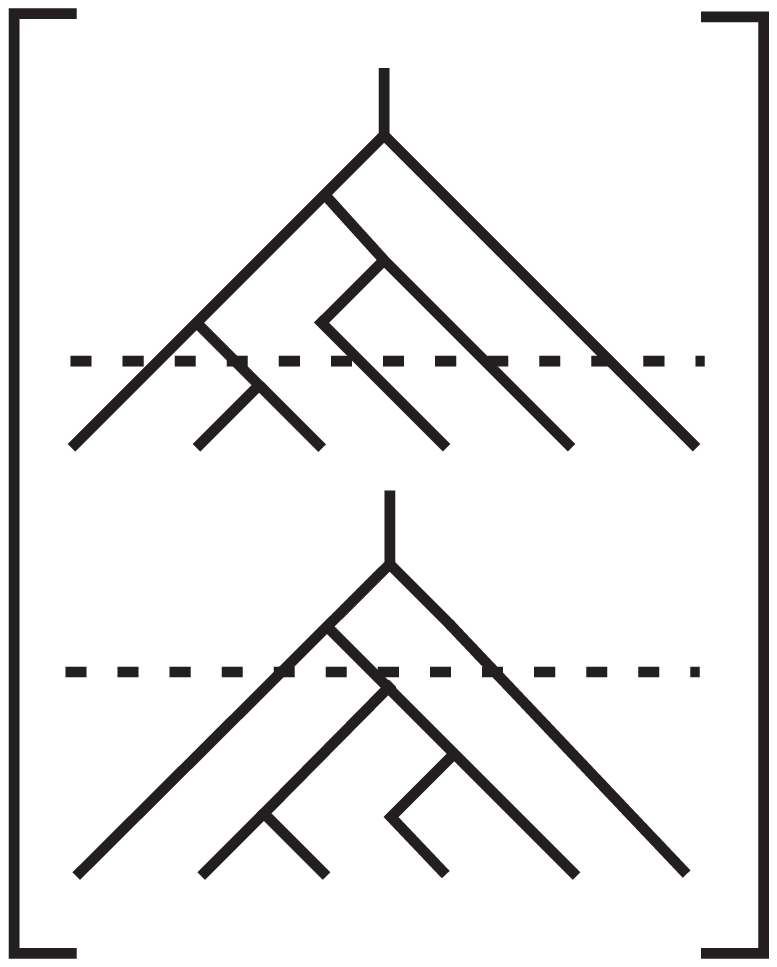}
}
=\left[
\begin{array}
[c]{c}
\curlywedge\\
\curlywedge
\end{array}
\right]  \left[
\begin{array}
[c]{c}
\curlywedge\text{\thinspace}\mathbf{1}\\
\curlywedge\text{\thinspace}\mathbf{1}
\end{array}
\right]  \left[
\begin{array}
[c]{c}
\mathbf{1}\text{\thinspace}\curlywedge\text{\thinspace}\mathbf{1}\\
\mathbf{1}\text{\thinspace}\curlywedge\text{\thinspace}\mathbf{1}
\end{array}
\right]  \left[
\begin{array}
[c]{c}
\curlywedge\text{ }\mathbf{1}\text{ }\mathbf{1}\text{ }\mathbf{1}\\
\ \mathbf{1}\text{ }\mathbf{1}\text{\thinspace}\curlywedge\text{{}}\mathbf{1}
\end{array}
\right]  \left[
\begin{array}
[c]{c}
\mathbf{1}\text{\thinspace}\curlywedge\text{\thinspace}\mathbf{1}\text{
}\mathbf{1}\text{ }\mathbf{1}\\
\mathbf{1}\text{\thinspace}\curlywedge\text{\thinspace}\mathbf{1}\text{
}\mathbf{1}\text{ }\mathbf{1}
\end{array}
\right]  ,
\]
where the dotted lines indicate the lowest levels in which branches are
attached on the extreme left ($\alpha=\varrho(x,2)=2$, and $\beta
=\varrho(x,1)=4$). Then
\[
\mathring{x}=\left[
\begin{array}
[c]{c}
2\\
2
\end{array}
\right]  \left[
\begin{array}
[c]{cc}
2 & 1\\
\mathring{2} & \mathring{1}
\end{array}
\right]  \left[
\begin{array}
[c]{ccc}
1 & 2 & 1\\
\dot{1} & 2 & 1
\end{array}
\right]  \left[
\begin{array}
[c]{cccc}
\mathring{2} & \mathring{1} & \mathring{1} & \mathring{1}\\
\dot{1} & 1 & 2 & 1
\end{array}
\right]  \left[
\begin{array}
[c]{ccccc}
\dot{1} & 2 & 1 & 1 & 1\\
\dot{1} & 2 & 1 & 1 & 1
\end{array}
\right]  \in\mathring{X}_{6}^{2}
\]
and
\[
_{2}\mathring{x}_{4}=\left[
\begin{array}
[c]{cc}
2 & 1\\
\mathring{2} & \mathring{1}
\end{array}
\right]  \left[
\begin{array}
[c]{ccc}
1 & 2 & 1\\
\dot{1} & 2 & 1
\end{array}
\right]  \left[
\begin{array}
[c]{cccc}
\mathring{2} & \mathring{1} & \mathring{1} & \mathring{1}\\
\dot{1} & 1 & 2 & 1
\end{array}
\right]  \in\mathring{X}_{6}^{2}\left(  2,4\right)  .
\]
The projections
\[
\left(  \left(  2,1\right)  ,\left(  1,2,1\right)  ,\left(  \mathring
{2},\mathring{1},\mathring{1},\mathring{1}\right)  \right)  \mapsto\left(
\left(  2,1\right)  ,\left(  1,2,1\right)  \right)
\]
and
\[
\left(  \left(  \mathring{2},\mathring{1}\right)  ,\left(  \dot{1},2,1\right)
,\left(  \dot{1},1,2,1\right)  \right)  \mapsto\left(  \left(  2,1\right)
,\left(  1,2,1\right)  \right)
\]
send $_{2}\mathring{x}_{4}$ to
\[
_{2}x_{3}^{\prime\prime}=\left[
\begin{array}
[c]{cc}
2 & 1\\
2 & 1
\end{array}
\right]  \left[
\begin{array}
[c]{ccc}
1 & 2 & 1\\
1 & 2 & 1
\end{array}
\right]  .
\]

\end{example}

Dually, there is the subposet $Y_{n}^{m}\subseteq\vee_{n}^{\times m},$ which
indexes the vertices of the subcomplex $\Delta^{\left(  m-1\right)  }\left(
P_{n}\right)  \subseteq P_{n}^{\times m};$ an element $y\in Y_{n}^{m}$ is
expressed as a row matrix $y=[\check{T}_{1}\cdots\check{T}_{m}]$ of $m$
down-rooted binary trees with $n+1$ leaves and $n$ levels. Now if $b\in
\vee_{n},$ there exists a unique BTP\ $\Upsilon$-factorization $b=b_{n}\cdots
b_{1}$ such that\ $b_{i}$ is a $i\times1$ column matrix over $\left\{
\mathbf{1,}\curlyvee\right\}  $ containing the entry $\curlyvee$ exactly once.
Thus by factoring each $\check{T}_{j},$ we obtain the (unique) BTP\ $\Upsilon
$-factorization $y=y_{n}\cdots y_{1}$ in which $y_{i}$ is an $i\times m$ matrix.

Let $\varkappa(y,j)$ be the \emph{highest level} of $\check{T}_{j}$ in which a
branch is attached on the extreme right. Given positive integers $\epsilon
\geq\delta,$ consider the set of \emph{bitruncated elements}
\begin{align*}
Y_{n}^{m}(\epsilon,\delta)  &  =\left\{  _{\epsilon}y_{\delta}=y_{\epsilon
}\cdots y_{\delta}\mid y=y_{n}\cdots y_{\epsilon}\cdots y_{\delta}\cdots
y_{1}\in{Y}_{n}^{m},\right. \\
&  \epsilon=\max_{1\leq j\leq m}\varkappa(y,j)\text{ and }\delta=\min_{1\leq
j\leq m}\varkappa(y,j)\}.
\end{align*}
Define $_{\epsilon}y_{\delta}<$ $_{\epsilon^{\prime}}y_{\delta^{\prime}
}^{\prime}$ if and only if $y<y^{\prime}$ for some $y=y_{n}\cdots y_{\epsilon
}\cdots y_{\delta}\cdots y_{1}$ and $y^{\prime}=y_{n}^{\prime}\cdots
y_{\epsilon^{\prime}}^{\prime}\cdots y_{\delta^{\prime}}^{\prime}\cdots
y_{1}^{\prime}$ in $Y_{n}^{m};$ then $Y_{n}^{m}(\epsilon,\delta)$ is the
\emph{poset of bitruncated elements.}

Now replace the symbols $\mathbf{1}$ and $\curlyvee$ in each $y_{i}$ with the
integers $1$ and $2,$ respectively. Then the $\left(  n+1,n\right)  $-column
descent sequence $D_{j}\!=\!\left(  \mathbf{y}_{n,j},\ldots,\mathbf{y}
_{2,j},\mathbf{y}_{1,j}=\left(  2\right)  \right)  $ of $\check{T}_{j}$
appears as the $j^{th}$ columns of $y_{n},\ldots,y_{1},$ and $\varkappa
=\varkappa(y,j)$ is the largest integer such that $\mathbf{y}_{\varkappa
,j}=(1,...,1,2).$ Introduce the decorations $\mathbf{\mathring{y}}
_{\varkappa,j}=({\mathring{1}},...,{\mathring{1}},{\mathring{2}})^{T}\ $and
$\mathbf{\dot{y}}_{i,j}=\left(  \ldots,1,2,\ldots,1,{\dot{1}}\right)  ^{T}$
for $i>\varkappa,$ and obtain the corresponding $\left(  n+1,n\right)
$-\emph{marked column descent sequence }
\[
\check{D}_{j}=\left(  \mathbf{\dot{y}}_{n,j},\ldots,\mathbf{\dot{y}
}_{\varkappa+1,j},\mathbf{\mathring{y}}_{\varkappa,j},\mathbf{y}
_{\varkappa-1,j},\ldots,\mathbf{y}_{1,j}\right)  .
\]
The term $\mathbf{\mathring{y}}_{\varkappa,j}$ represents the constant
$\varkappa\times1$ matrix $\left[  \mathfrak{f}_{1}^{1}\cdots\mathfrak{f}
_{1}^{1}\right]  ^{T}.$ Let $\mathring{y}$ denote the matrix string
$y=y_{n}\cdots y_{1}$ with decorated integer entries; then the poset
of\emph{\ marked bitruncated} \emph{elements }is
\[
\mathring{Y}_{n}^{m}(\epsilon,\delta)=\left\{  _{\epsilon}\mathring{y}
_{\delta}\mid\text{ }_{\epsilon}y_{\delta}\in Y_{n}^{m}(\epsilon
,\delta)\right\}  .
\]
Note that $\varkappa(y,j)$ is constant for all $j$ if and only if
$\delta=\epsilon,$ in which case $Y_{n}^{m}(\delta,\delta)$ is a singleton set
containing the $\delta\times m$ matrix
\[
_{\delta}\mathring{y}_{\delta}=\left[  \mathbf{\mathring{y}}_{\delta,1}
\cdots\mathbf{\mathring{y}}_{\delta,m}\right]  =\left[
\begin{array}
[c]{ccc}
\mathring{1} & \cdots & \mathring{1}\\
\vdots\medskip &  & \vdots\\
\mathring{1} & \cdots & \mathring{1}\\
\mathring{2} & \cdots & \mathring{2}
\end{array}
\right]  ,
\]
which represents the constant matrix $\left[  \mathfrak{f}_{1}^{1}\right]
^{\delta\times m}$.

Given $_{\epsilon}{\mathring{y}}_{\delta}\in\mathring{Y}_{n}^{m}
(\epsilon,\delta),$ consider the marked sequence of $j^{th}$ columns
\[
\left(  \mathbf{\dot{y}}_{\epsilon,j},\ldots,\mathbf{\dot{y}}_{\varkappa
+1,j},\mathbf{\mathring{y}}_{\varkappa,j},\mathbf{y}_{\varkappa-1,j}
,\ldots,\mathbf{y}_{\varkappa,j}\right)  .
\]
Let $\mathbf{y}_{k,j}^{\prime}$ denote the vector obtained from $\mathbf{\dot
{y}}_{k,j}$ by deleting the marked entry $\dot{1},$ and form the (unmarked)
column-descent sequence
\[
\left(  \mathbf{y}_{\epsilon-1,j}^{\prime\prime},...,\mathbf{y}_{\delta
,j}^{\prime\prime}\right)  =\left(  \mathbf{y}_{\epsilon-1,j}^{\prime}
,\ldots,\mathbf{y}_{\varkappa+1,j}^{\prime},\mathbf{y}_{\varkappa-1,j}
,\ldots,\mathbf{y}_{\varkappa,j}\right)  .
\]
Then $_{\epsilon-1}y_{\delta}^{\prime\prime}$ denotes the bitruncated element
whose $j^{th}$ columns are $\mathbf{y}_{\epsilon-1,j}^{\prime\prime
},...,\mathbf{y}_{\delta,j}^{\prime\prime}.$

\begin{example}
Consider the following element $y\in Y_{6}^{2}$ and its BTP factorization:
\[
y=
\raisebox{-0.2153in}{\includegraphics[
height=0.4774in,
width=1.3456in
]
{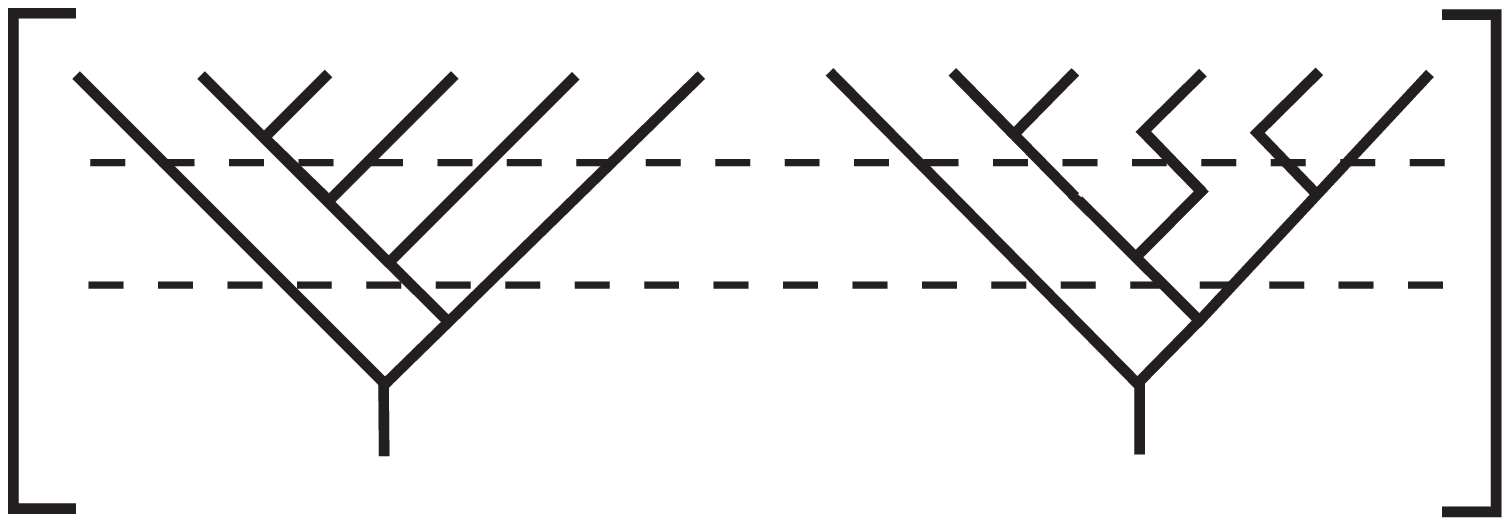}
}
=\left[
\begin{array}
[c]{c}
\mathbf{1}\text{ }\mathbf{1}\\
\curlyvee\curlyvee\\
\mathbf{1}\text{ }\mathbf{1}\\
\mathbf{1}\text{ }\mathbf{1}\\
\mathbf{1}\text{ }\mathbf{1}
\end{array}
\right]  \left[
\begin{array}
[c]{c}
\mathbf{1}\text{ }\mathbf{1}\\
\curlyvee\text{\thinspace}\mathbf{1}\\
\mathbf{1}\text{ }\mathbf{1}\\
\mathbf{1}\text{\thinspace}\curlyvee
\end{array}
\right]  \left[
\begin{array}
[c]{c}
\mathbf{1}\text{ }\mathbf{1}\\
\curlyvee\curlyvee\\
\mathbf{1}\text{ }\mathbf{1}
\end{array}
\right]  \left[
\begin{array}
[c]{c}
\mathbf{1}\text{ }\mathbf{1}\\
\curlyvee{\curlyvee}
\end{array}
\right]  \left[  \curlyvee\curlyvee\right]  ,
\]
where the dotted lines indicate the highest levels in which branches are
attached on the extreme right ($\epsilon=\varkappa(y,1)=4$, and $\delta
=\varkappa(y,2)=2$). Then
\[
\mathring{y}=\left[
\begin{array}
[c]{cc}
1 & 1\\
2 & 2\\
1 & 1\\
1 & 1\\
\dot{1} & \dot{1}
\end{array}
\right]  \left[
\begin{array}
[c]{cc}
1 & \mathring{1}\\
2 & \mathring{1}\\
1 & \mathring{1}\\
\dot{1} & \mathring{2}
\end{array}
\right]  \left[
\begin{array}
[c]{cc}
1 & 1\\
2 & 2\\
\dot{1} & 1
\end{array}
\right]  \left[
\begin{array}
[c]{cc}
\mathring{1} & 1\\
\mathring{2} & 2
\end{array}
\right]  \left[
\begin{array}
[c]{cc}
2 & 2
\end{array}
\right]  \in\mathring{Y}_{6}^{2}
\]
and
\[
_{4}\mathring{y}_{2}=\left[
\begin{array}
[c]{cc}
1 & \mathring{1}\\
2 & \mathring{1}\\
1 & \mathring{1}\\
\dot{1} & \mathring{2}
\end{array}
\right]  \left[
\begin{array}
[c]{cc}
1 & 1\\
2 & 2\\
\dot{1} & 1
\end{array}
\right]  \left[
\begin{array}
[c]{cc}
\mathring{1} & 1\\
\mathring{2} & 2
\end{array}
\right]  \in\mathring{Y}_{6}^{2}\left(  4,2\right)  .
\]
The projections
\[
\left(  \left(
\begin{array}
[c]{c}
1\\
2\\
1\\
\dot{1}
\end{array}
\right)  ,\left(
\begin{array}
[c]{c}
1\\
2\\
\dot{1}
\end{array}
\right)  ,\left(
\begin{array}
[c]{c}
\mathring{1}\\
\mathring{2}
\end{array}
\right)  \right)  \mapsto\left(  \left(
\begin{array}
[c]{c}
1\\
2\\
1
\end{array}
\right)  ,\left(
\begin{array}
[c]{c}
1\\
2
\end{array}
\right)  \right)
\]
and
\[
\left(  \left(
\begin{array}
[c]{c}
\mathring{1}\\
\mathring{1}\\
\mathring{1}\\
\mathring{2}
\end{array}
\right)  ,\left(
\begin{array}
[c]{c}
1\\
2\\
1
\end{array}
\right)  ,\left(
\begin{array}
[c]{c}
1\\
2
\end{array}
\right)  \right)  \mapsto\left(  \left(
\begin{array}
[c]{c}
1\\
2\\
1
\end{array}
\right)  ,\left(
\begin{array}
[c]{c}
1\\
2
\end{array}
\right)  \right)
\]
send $_{4}\mathring{y}_{2}$ to
\[
_{3}y_{2}^{\prime\prime}=\left[
\begin{array}
[c]{cc}
1 & 1\\
2 & 2\\
1 & 1
\end{array}
\right]  \left[
\begin{array}
[c]{cc}
1 & 1\\
2 & 2
\end{array}
\right]  .
\]

\end{example}

Now extend the poset structures on these sets of bitruncated elements to
\[
\mathring{X}_{m}^{j}(\alpha,\beta)\cup\mathring{Y}_{m}^{i}(\epsilon,\delta)
\]
in the following way: Given $_{\alpha}{\mathring{x}}_{\beta}\in\mathring
{X}_{m}^{j}(\alpha,\beta)$ and $_{\epsilon}{\mathring{y}}_{\delta}\in
\mathring{Y}_{m}^{i}(\epsilon,\delta),$ define $_{\alpha}\mathring{x}_{\beta
}\leq\,_{\epsilon}\mathring{y}_{\delta}$ if $\alpha\geq i$ and $j\leq\delta,$
and $_{\epsilon}\mathring{y}_{\delta}\leq\,_{\alpha}\mathring{x}_{\beta}$ if
$\alpha\leq i$ and $j\geq\delta.$ Then $\alpha=i$ and $j=\beta$ implies
$\alpha=\beta$ and $\epsilon=\delta,$ in which case $_{\alpha}\mathring
{x}_{\alpha}=\,_{\delta}\mathring{y}_{\delta}.$ This equality reflects the
correspondence
\[
\left[
\begin{array}
[c]{cccc}
\mathring{2} & \mathring{1} & \cdots & \mathring{1}\\
\vdots\medskip & \vdots &  & \vdots\\
\mathring{2} & \mathring{1} & \cdots & \mathring{1}
\end{array}
\right]  ^{\delta\times\alpha}\leftrightarrow\left[
\begin{array}
[c]{ccc}
\mathfrak{f}_{1}^{1} & \cdots & \mathfrak{f}_{1}^{1}\\
\vdots\medskip &  & \vdots\\
\mathfrak{f}_{1}^{1} & \cdots & \mathfrak{f}_{1}^{1}
\end{array}
\right]  ^{\delta\times\alpha}\leftrightarrow\left[
\begin{array}
[c]{ccc}
\mathring{1} & \cdots & \mathring{1}\\
\vdots\medskip &  & \vdots\\
\mathring{1} & \cdots & \mathring{1}\\
\mathring{2} & \cdots & \mathring{2}
\end{array}
\right]  ^{\delta\times\alpha}.
\]

Let $A=\left[  a_{ij}\right]  $ be an $(n+1)\times m$ matrix over
$\{\mathbf{1},\curlywedge\},$ each row of which contains the entry
$\curlywedge$ exactly once, and let $B=\left[  b_{ij}\right]  $ be an
$n\times(m+1)$ matrix over $\{\mathbf{1},\curlyvee\},$ each column of which
contains the entry $\curlyvee$ exactly once. Recall that a BTP $(A,B)$ is an
$\left(  i,j\right)  $\emph{-edge pair} if $a_{ij}=a_{i+1,j}=\curlywedge$ and
$b_{ij}=b_{i,j+1}=\curlyvee$. Note that $\left(  x_{m},y_{n}\right)  $ is the
only potential edge pair in a matrix string $x_{1}\cdots x_{m}y_{n}\cdots
y_{1}\in X_{m}^{n+1}\times Y_{n}^{m+1}.$

Given an $\left(  i,j\right)  $-edge pair $\left(  A,B\right)  ,$ let
$A^{i\ast}$ and $B^{\ast j}$ denote the matrices obtained by deleting the
$i^{th}$ row of $A$ and the $j^{th}$ column of $B$. If $c=C_{1}\cdots
C_{k}C_{k+1}$ $\cdots C_{r}$ is a matrix string in which $\left(
C_{k},C_{k+1}\right)  $ is an $\left(  i,j\right)  $-edge pair, the
$(i,j)$\emph{-transposition of }$c$\emph{\ in position} $k$ is the matrix
string
\[
\mathcal{T}_{ij}^{k}(c)=C_{1}\cdots C_{k+1}^{\ast j}C_{k}^{i\ast}\cdots
C_{r}.
\]
If $\left(  x_{m},y_{n}\right)  $ is an $\left(  i,j\right)  $-edge pair in
$u=x_{1}\cdots x_{m}y_{n}\cdots y_{1}\in X_{m}^{n+1}\times Y_{n}^{m+1}$, then
$(x_{m-1},y_{n}^{\ast j})$ and $(x_{m}^{i\ast},y_{n-1})$ are the potential
edge pairs in $\mathcal{T}_{ij}^{m}\left(  u\right)  .$ If $(x_{m-1}
,y_{n}^{\ast j})$ is a $\left(  k,l\right)  $-edge pair, then $(x_{m-2}
,y_{n}^{\ast j\ast l})$ is a potential edge pairs in
\[
\mathcal{T}_{kl}^{m-1}\mathcal{T}_{ij}^{m}\left(  u\right)  =x_{1}\cdots
x_{m-2}y_{n}^{\ast j\ast l}x_{m-1}^{k\ast}x_{m}^{i\ast}y_{n-1}\cdots y_{1},
\]
and so on. Clearly, $\mathcal{T}_{i_{t}j_{t}}^{k_{t}}\cdots\mathcal{T}
_{i_{1}j_{1}}^{k_{1}}\left(  u\right)  $ uniquely determines a shuffle
permutation $\sigma\in\Sigma_{m,n}.$ On the other hand, $\left(  A,B\right)  $
can be an $\left(  i,j\right)  $-edge pair for multiple values of $i$ and $j,$
in which case distinctly different compositions $\mathcal{T}_{i_{t}j_{t}
}^{k_{t}}\cdots\mathcal{T}_{i_{1}j_{1}}^{k_{1}}$ act on $u$ and determine the
same $\sigma.$ Thus we define $\mathcal{T}_{{\operatorname*{Id}}
}:={\operatorname*{Id}}$ and
\[
\mathcal{T}_{\sigma}\left(  u\right)  :=\left\{  \mathcal{T}_{i_{t}j_{t}
}^{k_{t}}\cdots\mathcal{T}_{i_{1}j_{1}}^{k_{1}}\left(  u\right)
\mid\mathcal{T}_{i_{t}j_{t}}^{k_{t}}\cdots\mathcal{T}_{i_{1}j_{1}}^{k_{1}
}\ \text{determines}\ \sigma\in\Sigma_{m,n}\right\}  .
\]

Recall that $\mathcal{PP}_{n,m}=X_{m}^{n+1}\times Y_{n}^{m+1}\cup Z_{n,m},$
where
\[
Z_{n,m}=\left\{  \mathcal{T}_{\sigma}\left(  u\right)  \mid u\in X_{m}
^{n+1}\times Y_{n}^{m+1}\text{ and }\sigma\in\Sigma_{m,n}\smallsetminus
\left\{  {\operatorname*{Id}}\right\}  \right\}  .
\]
The poset $r{\mathcal{PP}}_{n,m}$ is built upon $\mathcal{PP}_{n,m}$. Given
$u=x_{1}\cdots x_{m}y_{n}\cdots y_{1}$ and $a={\mathcal{T}}_{\sigma}(u),\ $let
$x_{i}^{\#}$ and $y_{j}^{\#}$ denote either $x_{i}$, $y_{j}$ or their
respective transpositions in $a.$ Let
\[
\mathcal{X}_{n,m}=\mathcal{X}_{n,m}^{\prime}\cup\mathcal{X}_{n,m}
^{\prime\prime},\text{ }m+n>0,m,n\geq0,
\]
where
\[
\mathcal{X}_{n,m}^{\prime}=\left\{  \left.  \left(  a,\text{\thinspace
}_{\alpha}\mathring{x}_{\alpha}\right)  \in\bigcup_{\substack{1\leq\alpha\leq
m+1\\1\leq i\leq n+1}}\mathcal{PP}_{n,m}\times\mathring{X}_{m+1}^{i}
(\alpha,\alpha)\text{ }\right\vert \text{ }x_{\alpha}^{\#}\ \text{has}
\ i\ \text{rows}\right\}  ,
\]
and
\[
\mathcal{X}_{n,m}^{\prime\prime}=\left\{  \left.  \left(  a,\,_{\alpha
}\mathring{x}_{\beta}\right)  \in\hspace*{-0.1in}\bigcup_{\substack{1\leq
\alpha<\beta\leq m+1\\1\leq i\leq n+1}}\hspace*{-0.1in}\mathcal{PP}
_{n,m}\times\mathring{X}_{m+1}^{i}(\alpha,\beta)\text{ }\right\vert \text{
}_{\alpha}x_{\beta-1}^{\prime\prime}\text{ is a substring of }a\right\}  .
\]
Dually, let
\[
\mathcal{Y}_{n,m}=\mathcal{Y}_{n,m}^{\prime}\cup\mathcal{Y}_{n,m}
^{\prime\prime},\text{ }m+n>0,m,n\geq0,
\]
where
\[
\mathcal{Y}_{n,m}^{\prime}=\left\{  \left.  (a,\,_{\delta}\mathring{y}
_{\delta})\in\bigcup_{\substack{1\leq\delta\leq n+1\\1\leq j\leq
m+1}}\mathcal{PP}_{n,m}\times\mathring{Y}_{n+1}^{j}(\delta,\delta)\text{
}\right\vert \text{\ }y_{\delta}^{\#}\ \text{has }j\ \text{columns}\right\}
,
\]
and
\[
\mathcal{Y}_{n,m}^{\prime\prime}=\left\{  \left.  (a,\,_{\epsilon}\mathring
{y}_{\delta})\in\hspace*{-0.1in}\bigcup_{\substack{1\leq\delta<\epsilon\leq
n+1\\1\leq j\leq m+1}}\hspace*{-0.1in}\mathcal{PP}_{n,m}\times\mathring
{Y}_{n+1}^{j}(\epsilon,\delta)\text{ }\right\vert \text{\ }_{\epsilon
-1}y_{\delta}^{\prime\prime}\text{ is a substring of }a\right\}  .
\]
Define
\[
r{\mathcal{PP}}_{n,m}=\mathcal{X}_{n,m}\cup\mathcal{Y}_{n,m}
\]
with the poset structure generated by $(a,b)\leq(a^{\prime},b^{\prime})$ for

\begin{itemize}
\item $a=a^{\prime}$ and $b\leq b^{\prime};$

\item $a\leq a^{\prime}$ and $b=b^{\prime}$ such that
$u^{\prime}=(\nu_{x}\times\nu_{y})u$
for $\nu_{x}\in S_{\alpha-1}^{\times n+1}\times S_{\beta-\alpha}^{\times
n+1}\times S_{m-\beta+1}^{\times n+1}\subset S_{m}^{\times n+1},\,\nu_{y}\in
S_{i-1}^{\times m+1}\times S_{n-i+1}^{\times m+1}\subset S_{n}^{\times m+1},$
$b\in\mathring{X}_{m+1}^{i}(\alpha,\beta),1\leq\alpha\leq\beta\leq m+1,$ or
$\nu_{x}\in S_{j-1}^{\times n+1}\times S_{m-j+1}^{\times n+1}\subset
S_{m}^{\times n+1},\,\nu_{y}\in S_{\delta-1}^{\times m+1}\times S_{\epsilon
-\delta}^{\times m+1}\times S_{n-\epsilon+1}^{\times m+1}\subset S_{n}^{\times
m+1},$ $b\in\mathring{Y}_{n+1}^{j}(\epsilon,\delta),1\leq\delta\leq
\epsilon\leq n+1$ with convention that $S_{0}\times S_{k}=S_{k}\times
S_{0}=S_{k},\,k\geq1.$
\end{itemize}

Note that for $m,n\geq1,$ $a\in X_{m}^{n+1}\times Y_{n}^{m+1}\subset
{\mathcal{P}P}_{n,m}$ whenever $\alpha=m+1$ in $\mathcal{X}_{n,m}^{\prime}$ or
$\delta=n+1$ in $\mathcal{Y}_{n,m}^{\prime}.$ Hence, the poset structure
identifies the subset
\[
\mathcal{X}_{n,m}^{0}:=\left(  X_{m}^{n+1}\times Y_{n}^{m+1}\right)
\times\mathring{X}_{m+1}^{n+1}(m+1,m+1)\subset\mathcal{X}_{n,m}
\]
with the subset
\[
\mathcal{Y}_{n,m}^{0}:=\left(  X_{m}^{n+1}\times Y_{n}^{m+1}\right)
\times\mathring{Y}_{n+1}^{m+1}(n+1,n+1)\subset\mathcal{Y}_{n,m}.
\]
Note also that $\mathcal{X}_{m,0}^{\prime}=\mathcal{X}_{m,0}^{\prime\prime
}=\mathcal{X}_{0,m}^{\prime\prime}=\mathcal{Y}_{0,n}^{\prime}=\mathcal{X}
_{0,n}^{\prime\prime}=\mathcal{Y}_{n,0}^{\prime\prime}=\varnothing;$ thus
$r{\mathcal{PP}}_{0,m}=\mathcal{X}_{0,m}^{\prime}$ and $r{\mathcal{PP}}
_{n,0}=\mathcal{Y}_{n,0}^{\prime}.$

The poset $\mathcal{JJ}_{n+1,m+1}$ is the image of the quotient map
$r\mathcal{PP}_{n,m}\rightarrow r\mathcal{PP}_{n,m} \diagup\!\sim$ given by
restricting the map $\mathcal{PP}_{\!\ast,\ast}\! \rightarrow \!  \mathcal{KK}
_{\ast+1,\ast+1}$ to left-hand factors, and $JJ_{n+1,m+1}$ is the geometric
realization $|\mathcal{JJ}_{n+1,m+1}|.$

The poset structure of $r\mathcal{PP}$ corresponds to the \textquotedblleft
cylindrical poset\textquotedblright\ $\mathcal{KK}\times I$ in the following
way. Given $a\in\mathcal{KK}_{n,m}$ and $t\geq m+n+1,$ consider a partition
\[
(a,b_{1})<\cdots<(a,b_{t})
\]
of the interval $a\times I,$ where $(a,b_{1})=a\times0$ and $(a,b_{t}
)=a\times1.$ If $a$ indexes a vertex $a_{1}|\cdots|a_{n+m}\in P_{n+m},$ and
$b_{t_{\alpha}}$ denotes the single element of $\mathring{X}_{m}^{j}
(\alpha,\alpha),$ then $(a,b_{t_{\alpha}})$ indexes the vertex
\[
a_{1}|\cdots|a_{n+\alpha-j}|m+n+1|a_{n+1+\alpha-j}|\cdots|a_{m+n}\in
P_{m+n+1};
\]
and in particular,
\[
(a,b_{1})\leftrightarrow a_{1}|\cdots|a_{m+n}|m+n+1\text{ and }(a,b_{t}
)\leftrightarrow m+n+1|a_{1}|\cdots|a_{m+n}.
\]
Note that this correspondence agrees with the combinatorial representation of
$P_{m+n+1}$ as a subdivision\ of the cylinder $P_{m+n}\times P_{2}
=P_{m+n}\times I$ (see \cite{SU2}), or equivalently, with the combinatorial
join $P_{m+n+1}=P_{m+n}\ast_{c}P_{1}$ (see \cite{SU4}). If in addition,
$b_{s}\in\mathring{X}_{m+1}^{j}(\alpha,\beta)$ with $\beta=\alpha+1,$ then
$(a,b_{s})$ subdivides the interval $[(a,b_{t_{\alpha+1}}),(a,b_{t_{\alpha}
})]$ (recall that $b_{t_{\alpha+1}}<b_{t_{\alpha}}$). In the octagon
$JJ_{2,2}$ in Figure 4 and Example \ref{octagon} below, we have
\[
(1|2,b_{1})<(1|2,b_{2})<(1|2,b_{3})<(1|2,b_{4})<(1|2,b_{5})
\]
\[
=1|2|3<v_{1}<1|3|2<v_{2}<3|1|2,
\]
where $v_{1}\leftrightarrow b_{2}\in\mathring{Y}_{2}^{2}(1,2)$ and
$v_{2}\leftrightarrow b_{4}\in\mathring{X}_{2}^{2}(1,2);$ on the other hand,
\[
(2|1,b_{1})<(2|1,b_{2})<(2|1,b_{3})
\]
\[
=2|1|3<2|3|1<3|2|1.
\]

Now consider an element $(a,b)\in\mathcal{PP}_{n,m}\times\mathring{X}
_{m+1}^{j}(\alpha,\beta)\subset r{\mathcal{PP}}_{n,m}$, and recall that
$a=a_{1}\cdots a_{n+m}$ is represented by a piecewise linear path in
$\mathbb{N}^{2}$ with $m+n$ directed components. The element $(a,b)$ is
represented by a piecewise linear path in $\mathbb{N}^{3}$ of the form $BCA$
with $m+n+1+\alpha-\beta$ directed components from $(m+1,1,0)\in\mathbb{N}
^{2}\times0$ to $(1,n+1,1)\in\mathbb{N}^{2}\times1$. The component $A$ is
represented by the path from $(m+1,1,0)$ to $(\beta,j,0)$ in $\mathbb{N}
^{2}\times0$ corresponding to $a_{n+\beta-j}\cdots a_{n+m};$ the component $C$
is represented by the arrow $(\beta,j,0)\rightarrow(\alpha,j,1);$ and the
component $B$ is represented by the path from $(\alpha,j,1)$ to $(1,n+1,1)$ in
$\mathbb{N}^{2}\times1$ corresponding to $a_{1}\cdots a_{n+\alpha-j}$ (see
Figure 1). Note that the arrow representing $C$ is perpendicular to both
integer lattices if and only if $\alpha=\beta$, in which case the path has
$m+n+1$ directed components. The case of $(a,b)\in\mathcal{Y}_{n,m}$ is
analogous with the arrow representing $C$ lying in a vertical plane. For
example, for $m=n=1,$ the singleton $(a,b)\in\mathcal{X}_{1,1}^{0}
(=\mathcal{Y}_{1,1}^{0})$ is expressed by $DC^{\prime}B$ in Figure 1 below.

\unitlength=1.00mm \linethickness{0.4pt} \begin{picture}(92.00,47.00)
\put(52.00,9.00){\line(1,0){40.00}} \put(52.00,9.00){\circle*{1.00}}
\put(72.00,9.00){\circle*{1.00}} \put(62.00,9.00){\circle*{1.00}}
\put(52.00,5.00){\makebox(0,0)[cc]{$1$}} \put(62.00,5.00){\makebox(0,0)[cc]{$2$}}
\put(72.00,5.00){\makebox(0,0)[cc]{$3$}} \ \ \put(62.00,9.00){\vector(0,1){10.00}}
\put(82.00,9.00){\vector(-1,0){10.00}} \put(72.00,9.00){\vector(-1,0){10.00}}
\put(62.00,9.00){\vector(-1,0){10.00}} \put(52.00,9.00){\vector(0,1){10.00}}
\put(34.33,22.00){\line(0,1){25.00}} \put(34.33,22.00){\circle*{1.00}}
\put(44.33,22.00){\circle*{1.00}} \put(34.33,32.00){\circle*{1.00}}
\put(34.33,42.00){\circle*{1.00}} \put(30.66,22.00){\makebox(0,0)[cc]{$1$}}
\put(30.66,32.00){\makebox(0,0)[cc]{$2$}} \put(44.33,22.00){\vector(0,1){10.00}}
\put(44.33,32.00){\vector(-1,0){10.00}} \put(44.33,22.00){\vector(-1,0){10.00}}
\put(34.33,22.00){\vector(0,1){10.00}} \put(34.33,32.00){\vector(0,1){10.00}}
\put(62.00,9.00){\line(-4,3){9.33}} \ \put(51.33,16.67){\vector(-4,3){7.00}}
\put(51.60,9.00){\vector(-4,3){17.27}}
\put(51.60,19.00){\vector(-4,3){17.27}}
\put(52.00,19.00){\circle*{1.33}} \put(62.00,19.00){\circle*{1.33}}
\put(44.33,32.00){\circle*{1.33}}
\put(52.00,19.00){\line(0,1){16.67}}
\put(62.00,19.00){\line(-3,2){9.33}} \put(62.00,19.00){\vector(-1,0){10.00}}
\put(72.00,19.00){\vector(-1,0){10.00}} \put(54.33,22.00){\vector(-1,0){10.00}}
\put(44.33,22.00){\line(1,0){11.67}}
\put(59.00,22.00){\line(1,0){16.33}}
\put(72.00,19.00){\circle*{1.33}}
\put(30.33,42.00){\makebox(0,0)[cc]{$3$}} \
\put(59.00,24.00){\makebox(0,0)[cc]{$_{C^{'}}$}}
\put(58.00,14.20){\makebox(0,0)[cc]{$_{C}$}}
\put(40.33,25.20){\makebox(0,0)[cc]{$_{C^{'''}}$}}
\put(40.67,14.67){\makebox(0,0)[cc]{$_{C^{''}}$}} \
\put(56.67,17.67){\makebox(0,0)[cc]{$_{A}$}}
\put(64.00,13.67){\makebox(0,0)[cc]{$_{B}$}}
\put(57.20,7.00){\makebox(0,0)[cc]{$_{A^{'}}$}}
\put(50.33,14.33){\makebox(0,0)[cc]{$_{B^{'}}$}}
\put(40.00,33.33){\makebox(0,0)[cc]{$_{D}$}} \
\put(46.50,26.67){\makebox(0,0)[cc]{$_{E}$}}
\put(32.50,27.00){\makebox(0,0)[cc]{$_{E^{'}}$}} \
\put(41.90,20.60){\makebox(0,0)[cc]{$_{D^{'}}$}} \

\put(54.33,22.00){\circle*{1.33}}
\put(72.00,9.00){\vector(0,1){9.67}}
\put(51.33,26.43){\vector(-4,3){7.00}}
\end{picture}

\begin{center}
Figure 1. $DEC<DC^{\prime}B<C^{\prime\prime\prime}AB<C^{\prime\prime\prime
}B^{\prime}A^{\prime}.$ \vspace*{0.1in}
\end{center}

Now suppose $w\in{\mathcal{JJ}}_{n+1,m+1}$ is the projection of $\tilde
{w}=(a,b)\in r\mathcal{PP}_{n,m}.$ Transform $w$ into a $0$-dimensional
element of the $A_{\infty}$-bialgebra morphism matrad $\mathcal{JJ}_{\infty}$
in the following way (see Example \ref{octagon} below): If $\tilde{w}
\in\mathcal{X}_{n,m}^{0}$ and $a=u\times v,$ replace $a$ by $u\cdot b\cdot v;$
if $\tilde{w}\in\mathcal{X}_{n,m}^{\prime},$ replace $a=\cdots x_{\alpha}
^{\#}\cdots$ by $\cdots\,_{\alpha}\mathring{x}_{\alpha}\cdot x_{\alpha}
^{\#}\cdots;$ if $\tilde{w}\in\mathcal{X}_{n,m}^{\prime\prime}$ and
$a=\cdots_{\alpha}\mathring{x}_{\beta-1}^{\prime\prime}\cdots,$ replace
$_{\alpha}x_{\beta-1}^{\prime\prime}$ by $_{\alpha}\mathring{x}_{\beta
}=x_{\alpha}\cdots x_{\beta},$ if $\tilde{w}\in\mathcal{Y}_{n,m}^{\prime},$
replace $a=\cdots y_{\delta}^{\#}\cdots$ by $\cdots y_{\delta}^{\#}
\cdot\,_{\delta}\mathring{y}_{\delta}\cdots;$ if $\tilde{w}\in\mathcal{Y}
_{n,m}^{\prime\prime}$ and $a=\cdots\,_{\epsilon-1}\mathring{y}_{\delta
}^{\prime\prime}\cdots,$ replace $_{\epsilon-1}y_{\delta}^{\prime\prime}$ by
$_{\epsilon}\mathring{y}_{\delta}=y_{\epsilon}\cdots y_{\delta}.$ Now replace
each row $(\mathring{2},\mathring{1},...,\mathring{1})$ by $(\mathfrak{f}
_{1}^{1},\mathfrak{f}_{1}^{1},...,\mathfrak{f}_{1}^{1})$, and each column
$(\mathring{1},...,\mathring{1},\mathring{2})^{T} $ by $(\mathfrak{f}_{1}
^{1},...,\mathfrak{f}_{1}^{1},\mathfrak{f}_{1}^{1})^{T};$ delete
$\overset{\centerdot}{1}$'s; replace each $1$ by $\mathbf{1},$ replace each
$2$ in $x_{i}^{\#}$ by $\theta_{2}^{1}$, and replace each $2$ in $y_{j}^{\#}$
by $\theta_{1}^{2}.$ This transformation induces the bijection in (\ref{JJ})
as in the absolute case.

\begin{example}
\label{octagon} The labels $v_{1}$ and $v_{2}$ are the midpoints of the edges
$1|23$ and $13|2$ of $P_{3},$ respectively (see Figure 4).
\[
\begin{array}
[c]{rllll}
1|2|3 & \leftrightarrow & DEC & = & \left[
\begin{array}
[c]{c}
\theta_{2}^{1}\medskip\\
\theta_{2}^{1}
\end{array}
\right]  \left[
\begin{array}
[c]{cc}
\theta_{1}^{2} & \theta_{1}^{2}
\end{array}
\right]  \left[
\begin{array}
[c]{cc}
\mathfrak{f}_{1}^{1} & \mathfrak{f}_{1}^{1}
\end{array}
\right]  \bigskip\\
1|3|2 & \leftrightarrow & DC^{\prime}B & = & \left[
\begin{array}
[c]{c}
\theta_{2}^{1}\medskip\\
\theta_{2}^{1}
\end{array}
\right]  \left[
\begin{array}
[c]{cc}
\mathfrak{f}_{1}^{1}\medskip & \mathfrak{f}_{1}^{1}\\
\mathfrak{f}_{1}^{1} & \mathfrak{f}_{1}^{1}
\end{array}
\right]  \left[
\begin{array}
[c]{cc}
\theta_{1}^{2} & \theta_{1}^{2}
\end{array}
\right]  \bigskip\\
3|1|2 & \leftrightarrow & C^{\prime\prime\prime}AB & = & \left[
\begin{array}
[c]{c}
\mathfrak{f}_{1}^{1}\medskip\\
\mathfrak{f}_{1}^{1}
\end{array}
\right]  \left[
\begin{array}
[c]{c}
\theta_{2}^{1}\medskip\\
\theta_{2}^{1}
\end{array}
\right]  \left[
\begin{array}
[c]{cc}
\theta_{1}^{2} & \theta_{1}^{2}
\end{array}
\right]  \bigskip\\
2|1|3 & \leftrightarrow & E^{\prime}D^{\prime}C & = & \left[  \theta_{1}
^{2}\right]  \left[  \theta_{2}^{1}\right]  \left[
\begin{array}
[c]{cc}
\mathfrak{f}_{1}^{1} & \mathfrak{f}_{1}^{1}
\end{array}
\right]  \bigskip\\
2|3|1 & \leftrightarrow & E^{\prime}C^{\prime\prime}A^{\prime} & = & \left[
\theta_{1}^{2}\right]  \left[  \mathfrak{f}_{1}^{1}\right]  \left[  \theta
_{2}^{1}\right]  \bigskip\\
3|2|1 & \leftrightarrow & C^{\prime\prime\prime}B^{\prime}A^{\prime} & = &
\left[
\begin{array}
[c]{c}
\mathfrak{f}_{1}^{1}\medskip\\
\mathfrak{f}_{1}^{1}
\end{array}
\right]  \left[  \theta_{1}^{2}\right]  \left[  \theta_{2}^{1}\right]
\end{array}
\]
\[
\begin{array}
[c]{rll}
v_{1} & \leftrightarrow & \left[
\begin{array}
[c]{c}
\theta_{2}^{1}\medskip\\
\theta_{2}^{1}
\end{array}
\right]  \left[
\begin{array}
[c]{cc}
\left[  \theta_{1}^{2}\right]  \left[  \mathfrak{f}_{1}^{1}\right]  & \left[
\begin{array}
[c]{c}
\mathfrak{f}_{1}^{1}\medskip\\
\mathfrak{f}_{1}^{1}
\end{array}
\right]  \left[  \theta_{1}^{2}\right]
\end{array}
\right]  \bigskip\\
v_{2} & \leftrightarrow & \left[
\begin{array}
[c]{c}
\left[  \theta_{2}^{1}\right]  \left[
\begin{array}
[c]{cc}
\mathfrak{f}_{1}^{1} & \mathfrak{f}_{1}^{1}
\end{array}
\right]  \medskip\\
\left[  \mathfrak{f}_{1}^{1}\right]  \left[  \theta_{2}^{1}\right]
\end{array}
\right]  \left[
\begin{array}
[c]{cc}
\theta_{1}^{2} & \theta_{1}^{2}
\end{array}
\right]  .
\end{array}
\hspace*{0.45in}
\]

\end{example}

The bijection in (\ref{JJ}) can be described on the codimension 1 level in the
following way: The components $\left[  \theta_{m+1}^{n+1}\right]  \left[
\mathfrak{f}_{1}^{1}\cdots\mathfrak{f}_{1}^{1}\right]  $ and $\left[
\mathfrak{f}_{1}^{1}\cdots\mathfrak{f}_{1}^{1}\right]  \left[  \theta
_{m+1}^{n+1}\right]  $ of $(\mathcal{JJ}_{\infty})_{n+1,m+1}$ are assigned to
the cells $KK_{n+1,m+1}\times0$ and $KK_{n+1,m+1}\times1,$ which are the
respective projections of $\underline{n+m}\mid n+m+1$ and $n+m+1\mid
\underline{n+m}$ in $P_{m+n+1},$ and labeled by the leaf sequences
\[
\begin{array}
[c]{c}
\overset{n+1}{\overbrace{^{\mathstrut}{\scriptsize 1\cdots1}^{\mathstrut}}}\\
\underset{m+1}{\underbrace{_{\mathstrut}{\scriptsize \mathring{1}
\cdots\mathring{1}}_{\mathstrut}}}
\end{array}
\text{ \ and \ }
\begin{array}
[c]{c}
\overset{n+1}{\overbrace{^{\mathstrut}{\scriptsize \mathring{1}\cdots
\mathring{1}}^{\mathstrut}}}\\
\underset{m+1}{\underbrace{_{\mathstrut}{\scriptsize 1\cdots1}_{\mathstrut}}}
\end{array}
\]
respectively. The other components ${A}_{p}^{\mathbf{y}}R_{\mathbf{x}}^{q}$
and ${Q}_{p}^{\mathbf{y}}B_{\mathbf{x}}^{q}$ of $(\mathcal{JJ}_{\infty
})_{n+1,m+1}$ are assigned to the cells $e_{1}$ and $e_{2}$ of $JJ_{n+1,m+1}$
obtained by subdividing $K_{n+1,m+1}\times I$ in the following ways: Let
$e_{(\mathbf{y},\mathbf{x})}=C|D$ be the codimension 1 cell of $P_{m+n}$
defined in line (\ref{cell}). Then
\[
C|\left(  D\cup\left\{  m+n+1\right\}  \right)  \cup\left(  C\cup\left\{
m+n+1\right\}  \right)  |D=C|D\times I\subset P_{m+n}\times I\approx
P_{m+n+1},
\]
\[
e_{1}=(\vartheta_{n,m}\times1)(C|D\cup\left\{  m+n+1\right\}  ),\text{ and
}e_{2}=(\vartheta_{n,m}\times1)\left(  C\cup\left\{  m+n+1\right\}  |D\right)
.
\]
The leaf sequences
\[
\begin{array}
[c]{c}
\mathbf{y}\\
\mathbf{\mathring{x}}
\end{array}
\text{ \ and \ }
\begin{array}
[c]{c}
\mathbf{\mathring{y}}\\
\mathbf{x}
\end{array}
\]
label $e_{1}$ and $e_{2},$ respectively (see Figures 2-5 below). \bigskip

\unitlength=1.00mm
\linethickness{0.4pt}
\ifx\plotpoint\undefined\newsavebox{\plotpoint}\fi\begin{picture}(52.5,20)(0,0)
\put(58,14.5){\makebox(0,0)[cc]{$\bullet$}}
\put(58.3,18){\makebox(0,0)[cc]{1}}
\put(58.5,11.25){\makebox(0,0)[cc]{$_{\mathring{1}}$}}
\put(58.5,8.75){\makebox(0,0)[cc]{$_{\mathring{1}}$}}
\end{picture}\vspace*{-0.2in}

\begin{center}
Figure 2. The point $JJ_{1,1}.$\vspace{0.3in}
\end{center}

\noindent\unitlength=1.00mm
\linethickness{0.4pt}\begin{picture}(-40.33,13.33)
\put(11.00,8.67){\line(1,0){43.00}}
\put(11.33,8.67){\circle*{1.33}}
\put(53.67,8.67){\circle*{1.33}}
\put(11.33,13.33){\makebox(0,0)[cc]{$1|2$}}
\put(11.33,03.33){\makebox(0,0)[cc]{$_{\mathring{1}}^{2}$}}
\put(53.67,13.00){\makebox(0,0)[cc]{$2|1$}}
\put(53.67,03.00){\makebox(0,0)[cc]{$_{1}^{\mathring{1}\mathring{1}}$}}
\put(71.00,8.67){\line(1,0){43.00}}
\put(71.33,8.67){\circle*{1.33}}
\put(113.67,8.67){\circle*{1.33}}
\put(71.33,13.33){\makebox(0,0)[cc]{$1|2$}}
\put(71.33,03.33){\makebox(0,0)[cc]{$_{\mathring{1}\mathring{1}}^{1}$}}
\put(113.67,13.00){\makebox(0,0)[cc]{$2|1$}}
\put(113.67,03.00){\makebox(0,0)[cc]{$_{2}^{\mathring{1}}$}}
\end{picture}\smallskip

\begin{center}
Figure 3. The intervals $JJ_{2,1}$ and $JJ_{1,2}$.\vspace{0.3in}
\end{center}

\vspace{0.3in} \noindent\unitlength=1.00mm \linethickness{0.4pt}
\begin{picture}(88.33,43.67)
\put(38.67,40.67){\line(1,0){48.33}} \put(87.00,40.67){\line(0,-1){31.67}}
\put(87.00,9.00){\line(-1,0){48.33}} \put(38.67,9.00){\line(0,1){31.67}}
\put(38.67,40.67){\circle*{1.33}} \put(38.67,25.00){\circle*{1.33}}
\put(38.67,9.00){\circle*{1.33}} \put(87.00,40.67){\circle*{1.33}}
\put(87.00,9.00){\circle*{1.33}} \put(87.00,25.00){\circle*{1.33}}
\put(32.33,33.33){\makebox(0,0)[cc]{$13|2$}}
\put(42.33,33.33){\makebox(0,0)[cc]{$^{\mathring{1}\mathring{1}}_{11}$}}
\put(32.00,17.00){\makebox(0,0)[cc]{$1|23$}}
\put(42.80,17.00){\makebox(0,0)[cc]{$_{\mathring{1}\mathring{1}}^{11}$}}
\put(61.00,5.00){\makebox(0,0)[cc]{$12|3$}}
\put(61.00,13.00){\makebox(0,0)[cc]{$_{\mathring{1}\mathring{1}}^{2}$}}
\put(62.00,43.67){\makebox(0,0)[cc]{$3|12$}}
\put(62.00,35.67){\makebox(0,0)[cc]{$^{\mathring{1}\mathring{1}}_{2}$}}
\put(93.33,17.00){\makebox(0,0)[cc]{$2|13$}}
\put(84.33,17.00){\makebox(0,0)[cc]{$_{\mathring{2}}^{2}$}}
\put(93.33,32.67){\makebox(0,0)[cc]{$23|1$}}
\put(84.33,32.67){\makebox(0,0)[cc]{$_{2}^{\mathring{2}}$}}
\put(38.67,32.33){\circle*{0.67}} \put(38.67,17.33){\circle*{0.67}}
\end{picture}

\begin{center}
Figure 4. The octagon $JJ_{2,2}$ as a subdivision of $P_{3}$.\vspace{0.2in}
\end{center}

\noindent\emph{Combinatorial data for} $JJ_{2,3}$:\bigskip

$
\begin{array}
[c]{lllllllllll}
123|4 & \leftrightarrow & _{\mathring{1}\mathring{1}\mathring{1}}^{2} =
\theta_{3}^{2}/\mathfrak{f}_{1}^{1}\mathfrak{f}_{1}^{1}\mathfrak{f}_{1}
^{1}\vspace{1mm} &  & 1|234 & \leftrightarrow & _{\mathring{2}\mathring{1}
}^{11}
 = \theta_{2}^{1}\theta_{2}^{1}/\mathfrak{f}_{2}^{2}\left(
\mathfrak{f}_{1}^{1}\mathfrak{f}_{1}^{1}/\theta_{1}^{2}\right) \\
4|123 & \leftrightarrow & _{3}^{\mathring{1}\mathring{1}}
\  = \mathfrak{f}
_{1}^{1}\mathfrak{f}_{1}^{1}/\theta_{3}^{2}\vspace{1mm} &
 & 14|23 &
\leftrightarrow & _{21}^{\mathring{1}\mathring{1}}
 = [(\mathfrak{f}
_{1}^{1}/\theta_{2}^{1})\mathfrak{f}_{2}^{1}+\mathfrak{f}_{2}^{1}\left(
\theta_{2}^{1}/\mathfrak{f}_{1}^{1}\mathfrak{f}_{1}^{1}\right)  ]/\theta
_{2}^{2}\theta_{1}^{2}\\
13|24 & \leftrightarrow & _{\mathring{2}\mathring{1}}^{2}
 \ = \theta_{2}
^{2}/\mathfrak{f}_{2}^{1}\mathfrak{f}_{1}^{1}\vspace{1mm} &  & 2|134 &
\leftrightarrow & _{\mathring{1}\mathring{2}}^{11}
 = \theta_{2}^{1}
\theta_{2}^{1}/(\theta_{1}^{2}/\mathfrak{f}_{1}^{1})\mathfrak{f}_{2}^{2}\\
134|2 & \leftrightarrow & _{21}^{\mathring{2}}
\  = \mathfrak{f}_{2}
^{2}/\theta_{2}^{1}\theta_{1}^{1}\vspace{1mm} &
 & 24|13 & \leftrightarrow &
_{12}^{\mathring{1}\mathring{1}}
 = [(\mathfrak{f}_{1}^{1}/\theta_{2}
^{1})\mathfrak{f}_{2}^{1}+\mathfrak{f}_{2}^{1}\left(  \theta_{2}
^{1}/\mathfrak{f}_{1}^{1}\mathfrak{f}_{1}^{1}\right)  ]/\theta_{1}^{2}
\theta_{2}^{2}\\
3|124 & \leftrightarrow & _{\mathring{3}}^{2}
\ \  = \theta_{1}^{2}
/\mathfrak{f}_{3}^{1}\vspace{1mm} &  & 23|14 & \leftrightarrow &
_{\mathring{1}\mathring{2}}^{2}
= \theta_{2}^{2}/\mathfrak{f}_{1}
^{1}\mathfrak{f}_{2}^{1}\\
34|12 & \leftrightarrow & _{3}^{\mathring{2}}
\ \ = \mathfrak{f}_{1}
^{2}/\theta_{3}^{1}\vspace{1mm} &  & 234|1 & \leftrightarrow & _{12}
^{\mathring{2}}
= \mathfrak{f}_{2}^{2}/\theta_{1}^{1}\theta_{2}^{1}
\end{array}
$

$
\begin{array}
[c]{lllll}
12|34 & \leftrightarrow & _{\mathring{1}\mathring{1}\mathring{1}}^{11}
  =
[\left(  \theta_{2}^{1}/\theta_{2}^{1}\theta_{1}^{1}\right)  \theta_{3}
^{1}+\theta_{3}^{1}\left(  \theta_{2}^{1}/\theta_{1}^{1}\theta_{2}^{1}\right)
]/\\
  &  & \hspace{0.38in}  [(\theta_{1}^{2}/\mathfrak{f}_{1}^{1})(\theta_{1}^{2}
/\mathfrak{f}_{1}^{1})\mathfrak{f}_{1}^{2}+\,(\theta_{1}^{2}/\mathfrak{f}
_{1}^{1})\mathfrak{f}_{1}^{2}(\mathfrak{f}_{1}^{1}\mathfrak{f}_{1}^{1}
/\theta_{1}^{2})+\mathfrak{f}_{1}^{2}(\mathfrak{f}_{1}^{1}\mathfrak{f}_{1}
^{1}/\theta_{1}^{2})(\mathfrak{f}_{1}^{1}\mathfrak{f}_{1}^{1}/\theta_{1}
^{2})]\vspace{1mm}\\
124|3 & \leftrightarrow & _{111}^{\mathring{1}\mathring{1}}
= [(\theta
_{2}^{1}/\theta_{2}^{1}\theta_{1}^{1}/\mathfrak{f}_{1}^{1}\mathfrak{f}_{1}
^{1}\mathfrak{f}_{1}^{1})\mathfrak{f}_{3}^{1}+\mathfrak{f}_{3}^{1}
(\mathfrak{f}_{1}^{1}/\theta_{2}^{1}/\theta_{1}^{1}\theta_{2}^{1})+(\theta
_{3}^{1}/\mathfrak{f}_{1}^{1}\mathfrak{f}_{1}^{1}\mathfrak{f}_{1}^{1}
)(\theta_{2}^{1}/\mathfrak{f}_{1}^{1}\mathfrak{f}_{2}^{1})\\
   &  & \hspace{0.38in}  +(\theta_{3}^{1}/\mathfrak{f}_{1}^{1}\mathfrak{f}_{1}
^{1}\mathfrak{f}_{1}^{1})(\mathfrak{f}_{2}^{1}/\theta_{1}^{1}\theta_{2}
^{1})+(\theta_{2}^{1}/\mathfrak{f}_{1}^{1}\mathfrak{f}_{2}^{1})(\mathfrak{f}
_{2}^{1}/\theta_{1}^{1}\theta_{2}^{1})\\
&  & \hspace{0.38in} -(\theta_{2}^{1}/\mathfrak{f}_{2}
^{1}\mathfrak{f}_{1}^{1})(\mathfrak{f}_{2}^{1}/\theta_{2}^{1}\theta_{1}^{1})  -(\theta_{2}^{1}/\mathfrak{f}_{2}^{1}\mathfrak{f}_{1}
^{1})(\mathfrak{f}_{1}^{1}/\theta_{3}^{1})\\
&  & \hspace{0.38in} -(\mathfrak{f}_{2}^{1}/\theta
_{2}^{1}\theta_{1}^{1})(\mathfrak{f}_{1}^{1}/\theta_{3}^{1})]/\,\theta_{1}
^{2}\theta_{1}^{2}\theta_{1}^{2}
\end{array}
\bigskip$

\begin{center}
\begin{center}
\includegraphics[
height=2.6749in,
width=3.4722in
]
{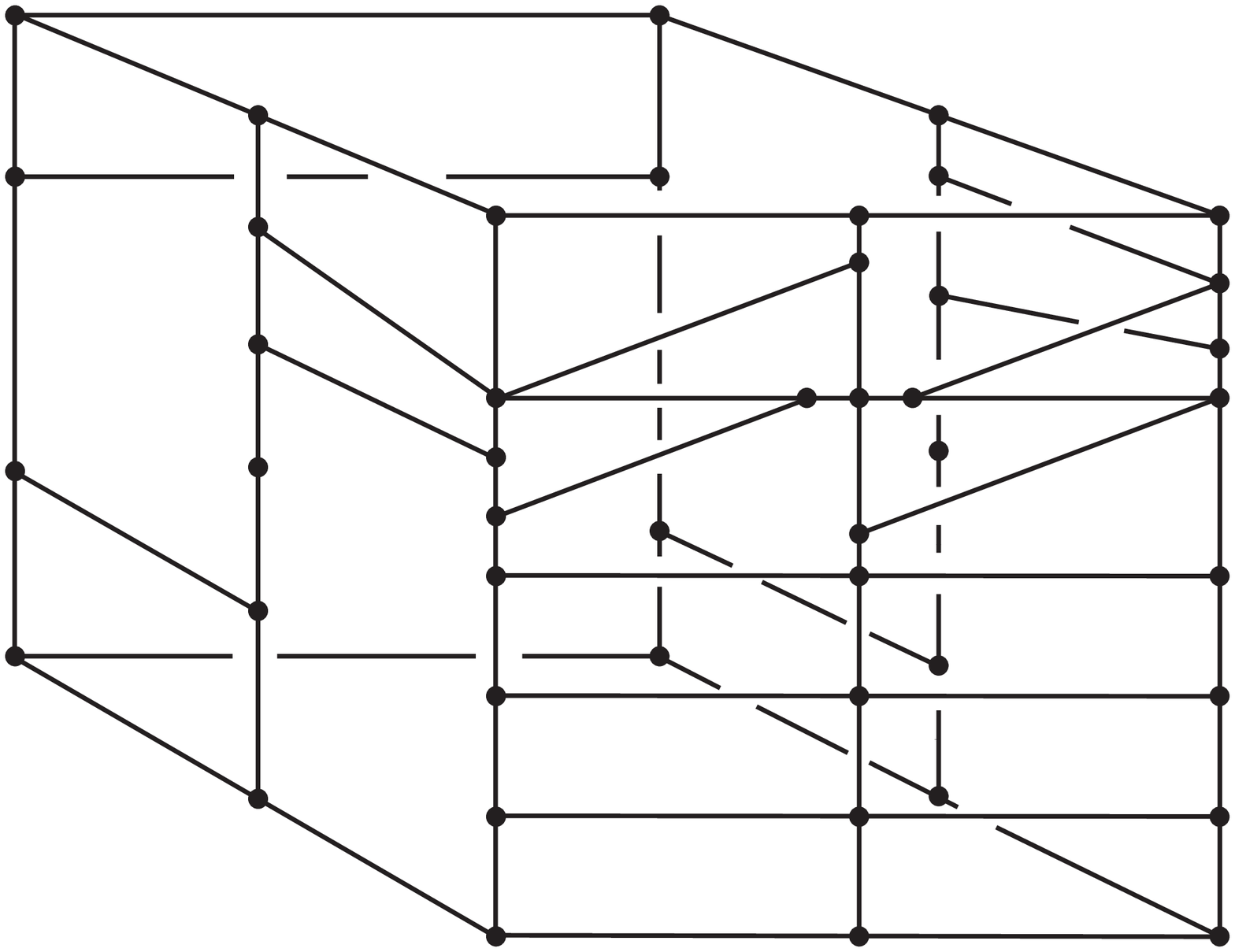}
\\
\ \newline Figure 5. The bimultiplihedron $JJ_{2,3}$ as a subdivision of
$P_{4}$.
\end{center}
\medskip
\end{center}

Unlike the 3-dimensional biassociahedra, certain 2-cells of the 3-dimensional
bimultiplihedra are defined in terms of $\Delta_{P}$. When constructing
$JJ_{2,3},$ for example, we use $\Delta_{P}(P_{3})$ to subdivide the cell
$124|3$, and label the faces in a manner similar to the labeling on $KK_{2,3}$
(see \cite{SU4}, Figure 19). Furthermore, it is convenient to think of
$JJ_{2,3}$ as a subdivision of the cylinder $KK_{2,3}\times I;$ then 2-faces
of $(\mathcal{JJ}_{\infty})_{2,3}$ are represented by a path
$(3,1,0)\rightarrow(s,t,\epsilon)\rightarrow(1,2,1)$ in $\mathbb{N}^{3}$ with
$\epsilon=0,1$. For example, the paths $(3,1,0)\rightarrow(2,2,1)\rightarrow
(1,2,1)$ and $(3,1,0)\rightarrow(2,2,0)\rightarrow(1,2,1)$ represent the
2-faces $e_{1}=1|234$ and $e_{2}=14|23$, respectively. Finally, we remark that
a general codimension 1 face of $(\mathcal{JJ}_{\infty})_{n,m}$ is detected by
the pair of leaf sequences $(\mathbf{x},\mathbf{y})$ and the corresponding path.

While $JJ_{1,n}$ and $JJ_{n,1}$ are combinatorially isomorphic to the
multiplihedron $J_{n}$, their distinct combinatorial representations are
related by the cellular isomorphism $JJ_{1,n}\rightarrow JJ_{n,1}$ induced by
the reversing map $\tau:a_{1}|\cdots|a_{n}\mapsto a_{n}|\cdots|a_{1}$ on
$P_{n}.$ Let $\pi_{1}:P_{n}\rightarrow J_{n}$ and $\pi_{2}:P_{n}\rightarrow
J_{n}$ be the respective cellular projections $P_{n}=|r\mathcal{PP}
_{0,n-1}|\rightarrow JJ_{1,n}$ and $P_{n}=|r\mathcal{PP}_{n-1,0}|\rightarrow
JJ_{n,1},$ $n\geq2;$ then $\pi_{1}=\pi_{2}\circ\tau$.

\begin{example}
\label{mult4}The bijection (\ref{JJ}) is represented on the vertices of
$J_{4}$ in $\pi_{1}(2|134\cup24|13)$ as follows:
\[
\begin{array}
[c]{ccc}
\left[  \theta_{2}^{1}\right]  \left[  \mathbf{1}\text{ }\theta_{2}
^{1}\right]  \left[  \theta_{2}^{1}\text{ }\mathbf{1}\text{ }\mathbf{1}
\right]  \left[  \mathfrak{f}_{1}^{1}\text{ }\mathfrak{f}_{1}^{1}\text{
}\mathfrak{f}_{1}^{1}\text{ }\mathfrak{f}_{1}^{1}\right]  & \sim & \left[
\theta_{2}^{1}\right]  \left[  \theta_{2}^{1}\text{ }\mathbf{1}\right]
\left[  \mathbf{1}\text{ }\mathbf{1}\text{ }\theta_{2}^{1}\right]  \left[
\mathfrak{f}_{1}^{1}\text{ }\mathfrak{f}_{1}^{1}\text{ }\mathfrak{f}_{1}
^{1}\text{ }\mathfrak{f}_{1}^{1}\right] \\
\updownarrow &  & \updownarrow\\
\left(  \left(  2\right)  ,\left(  12\right)  ,\left(  211\right)  ,\left(
\mathring{2}\mathring{1}\mathring{1}\mathring{1}\right)  \right)  & \sim &
\left(  \left(  2\right)  ,\left(  21\right)  ,\left(  112\right)  ,\left(
\mathring{2}\mathring{1}\mathring{1}\mathring{1}\right)  \right) \\
\updownarrow &  & \updownarrow\\
2|1|3|4 & \overset{\pi_{1}}{\sim} & 2|3|1|4\\
&  & \\
\left[  \theta_{2}^{1}\right]  \left[  \mathbf{1}\text{ }\theta_{2}
^{1}\right]  \left[  \mathfrak{f}_{1}^{1}\text{ }\mathfrak{f}_{1}^{1}\text{
}\mathfrak{f}_{1}^{1}\right]  \left[  \theta_{2}^{1}\text{ }\mathbf{1}\text{
}\mathbf{1}\right]  &  & \left[  \theta_{2}^{1}\right]  \left[  \theta_{2}
^{1}\text{ }\mathbf{1}\right]  \left[  \mathfrak{f}_{1}^{1}\text{
}\mathfrak{f}_{1}^{1}\text{ }\mathfrak{f}_{1}^{1}\right]  \left[
\mathbf{1}\text{ }\mathbf{1}\text{{}}\theta_{2}^{1}\right] \\
\updownarrow &  & \updownarrow\\
\left(  \left(  2\right)  ,\left(  12\right)  ,\left(  \mathring{2}
\mathring{1}\mathring{1}\right)  ,\left(  \dot{1}211\right)  \right)  &  &
\left(  \left(  2\right)  ,\left(  21\right)  ,\left(  \mathring{2}
\mathring{1}\mathring{1}\right)  ,\left(  \dot{1}112\right)  \right) \\
\updownarrow &  & \updownarrow\\
2|1|4|3 &  & 2|3|4|1\\
&  & \\
\left[  \theta_{2}^{1}\right]  \left[  \mathfrak{f}_{1}^{1}\text{
}\mathfrak{f}_{1}^{1}\right]  \left[  \mathbf{1}\text{ }\theta_{2}^{1}\right]
\left[  \theta_{2}^{1}\text{ }\mathbf{1}\text{ }\mathbf{1}\right]  & \sim &
\left[  \theta_{2}^{1}\right]  \left[  \mathfrak{f}_{1}^{1}\text{
}\mathfrak{f}_{1}^{1}\right]  \left[  \theta_{2}^{1}\text{ }\mathbf{1}\right]
\left[  \mathbf{1}\text{ }\mathbf{1}\text{ }\theta_{2}^{1}\right] \\
\updownarrow &  & \updownarrow\\
\left(  \left(  2\right)  ,\left(  \mathring{2}\mathring{1}\right)  ,\left(
\dot{1}12\right)  ,\left(  \dot{1}211\right)  \right)  & \sim & \left(
\left(  2\right)  ,\left(  \mathring{2}\mathring{1}\right)  ,\left(  \dot
{1}21\right)  ,\left(  \dot{1}112\right)  \right) \\
\updownarrow &  & \updownarrow\\
2|4|1|3 & \overset{\pi_{1}}{\sim} & 2|4|3|1\\
&  & \\
\left[  \mathfrak{f}_{1}^{1}\right]  \left[  \theta_{2}^{1}\right]  \left[
\mathbf{1}\text{ }\theta_{2}^{1}\right]  \left[  \theta_{2}^{1}\text{
}\mathbf{1}\text{ }\mathbf{1}\right]  & \sim & \left[  \mathfrak{f}_{1}
^{1}\right]  \left[  \theta_{2}^{1}\right]  \left[  \theta_{2}^{1}\text{
}\mathbf{1}\right]  \left[  \mathbf{1}\text{ }\mathbf{1}\text{ }\theta_{2}
^{1}\right] \\
\updownarrow &  & \updownarrow\\
\left(  \left(  \mathring{2}\right)  ,\left(  \dot{1}2\right)  ,\left(
\dot{1}12\right)  ,\left(  \dot{1}211\right)  \right)  & \sim & \left(
\left(  \mathring{2}\right)  ,\left(  \dot{1}2\right)  ,\left(  \dot
{1}21\right)  ,\left(  \dot{1}112\right)  \right) \\
\updownarrow &  & \updownarrow\\
4|2|1|3 & \overset{\pi_{1}}{\sim} & 4|2|3|1.
\end{array}
\]
Note that the 2-cell $24|13$ and the edge $2|13|4$ of $P_{4}$ degenerate under
$\pi_{1},$ whereas the 2-cell $13|24$ and the edge $4|13|2$ of $P_{4}$
degenerate under $\pi_{2}.$ For comparison, the cellular map $\pi
:P_{n}\rightarrow J_{n}$ defined in \cite{SU2} differs from both $\pi_{1}$ and
$\pi_{2}$: The 2-cell $24|13$ and the edge $1|24|3$ of $P_{4}$ degenerate
under $\pi$.
\end{example}

\section{Morphisms Defined}

In this section we define the morphisms of $A_{\infty}$-bialgebras. But before
we begin, we mention three settings in which $A_{\infty}$-bialgebras naturally
appear:\vspace{0.1in}

\noindent(1) Let $X$ be a space. The cobar construction $\Omega S_{\ast}(X)$
on the simplicial singular chain complex of $X$ is a DG bialgebra with
coassociative coproduct \cite{Baues1}, \cite{CM}, \cite{KS1}, but whether or
not $\Omega^{2}S_{\ast}(X)$ admits a coassociative coproduct is unknown.
However, there is an $A_{\infty}$-coalgebra structure on $\Omega^{2}S_{\ast
}(X),$ which is compatible with the product, and $\Omega^{2}S_{\ast}(X)$ is an
$A_{\infty}$-bialgebra.\vspace{0.1in}

\noindent(2) Let $H$ be a graded bialgebra with nontrivial product and
coproduct, and let $\rho:RH\longrightarrow H$ be a (bigraded) multiplicative
resolution. Since $RH$ cannot be free and cofree, it is difficult to introduce
a coassociative coproduct on $RH$ so that $\rho$ is a bialgebra map. However,
there is always an $A_{\infty}$-bialgebra structure on $RH$ such that $\rho$
is a morphism of $A_{\infty}$-bialgebras.\vspace{0.1in}

\noindent(3) If $A$ is an $A_{\infty}$-bialgebra over a field, and
$g:H(A)\rightarrow A$ is a cycle-selecting homomorphism, there is an
$A_{\infty}$-bialgebra structure on $H(A),$ which is unique up to isomorphism,
and a morphism $G:H(A)\Rightarrow A$ of $A_{\infty}$-bialgebras extending $g$
(see Theorem \ref{AAHopf}).\vspace*{0.1in}

Recall the following equivalent definitions of an $A_{\infty}$-bialgebra:

\begin{definition}
\label{hopf2} A graded $R$-module $A$ together with an element
\[
\omega=\{\omega_{m}^{n}\in Hom^{m+n-3}(A^{\otimes m},A^{\otimes n}
)\}_{m,n\geq1}\in U_{A}
\]
is an $A_{\infty}$\textbf{-bialgebra} if either

\begin{enumerate}
\item[\textit{(i)}] $\omega\circledcirc\omega=0$ or

\item[\textit{(ii)}] the map $\theta_{m}^{n}\mapsto\omega_{m}^{n}$ extends to
a map ${\mathcal{H}}_{\infty}\rightarrow U_{A}$ of matrads.
\end{enumerate}
\end{definition}

There is an operation $\ominus:U_{B}\times U_{A,B}\times U_{A}\rightarrow
U_{A,B}$ analogous to $\circledcirc,$ which allows us to define a morphism of
$A_{\infty}$-bialgebras in two equivalent ways. Given DG $R$-modules (DGMs)
$\left(  A,d_{A}\right)  $ and $\left(  B,d_{B}\right)  ,$ let $d_{A}
:TA\rightarrow TA$ and $d_{B}:TB\rightarrow TB$ be the free linear extensions
of $d_{A}$ and $d_{B},$ and let $\nabla$ be the induced Hom differential on
$U_{A,B}$, i.e., for $f\in U_{A,B}$ define $\nabla f=d_{B}\circ f-\left(
-1\right)  ^{\left\vert f\right\vert }f\circ d_{A}.$ Given $\left(
\zeta,f,\eta\right)  \in U_{B}\times U_{A,B}\times U_{A}$ and $m,n\geq1,$
obtain $X^{n,m}+Y^{n,m}\in(U_{A,B})_{n,m}$ by replacing $(\theta
,\mathfrak{f},\theta)$ in the right-hand side of formula (\ref{reldiff}) with
$(\zeta,f,\eta)$. Then define
\[
\ominus(\zeta,f,\eta)=\{X^{n,m}\}_{m,n\geq1}+\{Y^{n,m}\}_{m,n\geq1}-\nabla f.
\]

\begin{definition}
Let $(A,\omega_{A})$ and $(B,\omega_{B})$ be $A_{\infty}$-bialgebras. An
element
\[
G=\{g_{m}^{n}\in Hom^{m+n-2}(A^{\otimes m},B^{\otimes n})\}_{m,n\geq1}\in
U_{A,B}
\]
is an $A_{\infty}$-\textbf{bialgebra morphism from} $A$ \textbf{to} $B$ if either

\begin{enumerate}
\item[\textit{(i)}] $\ominus(\omega_{B},G,\omega_{A})=0$ or

\item[\textit{(ii)}] the map $\mathfrak{f}_{m}^{n}\mapsto g_{m}^{n}$ extends
to a map $\mathcal{J}\mathcal{J}_{\infty}\rightarrow U_{A,B}$ of relative matrads.
\end{enumerate}

\noindent The symbol $G:A\Rightarrow B$ denotes an $A_{\infty}$-bialgebra
morphism $G$ from $A$ to $B.$ An $A_{\infty}$-bialgebra morphism $\Phi
=\{\phi_{m}^{n}\}_{m,n\geq1}:A\Rightarrow B$ is an \textbf{isomorphism} if
$\phi_{1}^{1}:A\rightarrow B{\ }$is an isomorphism of underlying modules.
\end{definition}

\section{Transfer of $A_{\infty}$-Structure}

If $A$ is a free DGM, $B$ is an $A_{\infty}$-coalgebra, and $g:A\rightarrow B$
is a homology isomorphism (weak equivalence) with a right-homotopy inverse,
the Coalgebra Perturbation Lemma transfers the $A_{\infty}$-coalgebra
structure from $B$ to $A$ (see \cite{Hueb-Kade}, \cite{Markl1}).
When $B$ is an $A_{\infty}$-bialgebra, Theorem \ref{transfer} generalizes the
CPL in two directions:

\begin{enumerate}
\item The $A_{\infty}$-bialgebra structure transfers from $B$ to $A.$

\item Neither freeness nor a existence of a right-homotopy inverse is required.
\end{enumerate}

\noindent Note that (2) formulates the transfer of $A_{\infty}
$-algebra structure in maximal generality (see Remark \ref{twoinfty}).

\begin{proposition}
\label{homology-iso}Let $A$ and $B$ be DGMs. If $g:A\rightarrow B$ is a chain
map and $u\in Hom\left(  A^{\otimes m},A^{\otimes n}\right)  $, the induced
map $\tilde{g}:U_{A}\rightarrow U_{A,B}$ defined by $\tilde{g}\left(
u\right)  =g^{\otimes n}u$ is a cochain map. Furthermore, if $g$ is also a
homology isomorphism, $\tilde{g}$ is a cohomology isomorphism if either

\begin{enumerate}
\item[\textit{(i)}] $A$ is free as an $R$-module or

\item[\textit{(ii)}] for each $n\geq1,$ there is a DGM\ $X\left(  n\right)  $
and a splitting $B^{\otimes n}=A^{\otimes n}\oplus X(n)$ as a chain complex
such that $H^{\ast}Hom\left(  A^{\otimes k},X\left(  n\right)  \right)  =0$
for all $k\geq1.$
\end{enumerate}
\end{proposition}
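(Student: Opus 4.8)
The plan is to treat the two assertions separately: the cochain-map claim by a direct computation, and the cohomology-isomorphism claim one bidegree at a time. First I would verify that $\tilde{g}$ commutes with the Hom differentials. Write $D$ for the Hom differential on $U_A$, so that $Du=d_A u-(-1)^{|u|}u\,d_A$ for $u\in Hom(A^{\otimes m},A^{\otimes n})$, where $d_A$ also denotes the induced (Leibniz) differential on each tensor power. Since $g$ is a chain map, $d_Bg=g\,d_A$, and because $g$ has degree $0$ the Koszul signs in the differentials on $A^{\otimes n}$ and $B^{\otimes n}$ agree, giving the key identity $d_B\,g^{\otimes n}=g^{\otimes n}d_A$ as maps $A^{\otimes n}\rightarrow B^{\otimes n}$. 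Then for $u\in Hom(A^{\otimes m},A^{\otimes n})$,
\[
\nabla\tilde{g}(u)=d_B\,g^{\otimes n}u-(-1)^{|u|}g^{\otimes n}u\,d_A=g^{\otimes n}(d_A u)-(-1)^{|u|}g^{\otimes n}(u\,d_A)=\tilde{g}(Du),
\]
so $\tilde{g}$ is a cochain map.

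For the second assertion I would use that $\tilde{g}$ preserves the bidegree $(n,m)$: its $(n,m)$-component is precisely the post-composition map $Hom(A^{\otimes m},g^{\otimes n}):Hom(A^{\otimes m},A^{\otimes n})\rightarrow Hom(A^{\otimes m},B^{\otimes n})$. Hence $\tilde{g}$ is a cohomology isomorphism if and only if each of these maps is, and it suffices to treat a single bidegree. Under hypothesis (ii) this is immediate: the splitting $B^{\otimes n}=A^{\otimes n}\oplus X(n)$, in which $g^{\otimes n}$ is the inclusion of the first summand, induces a splitting of cochain complexes $Hom(A^{\otimes m},B^{\otimes n})=Hom(A^{\otimes m},A^{\otimes n})\oplus Hom(A^{\otimes m},X(n))$, and the second summand is acyclic by assumption; therefore the summand inclusion $Hom(A^{\otimes m},g^{\otimes n})$ is a cohomology isomorphism.

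Under hypothesis (i) I would argue in two steps. First, $g^{\otimes n}:A^{\otimes n}\rightarrow B^{\otimes n}$ is itself a homology isomorphism: since $A$ is free, every tensor power $A^{\otimes k}$ is free, hence flat, so the relevant Künneth short exact sequences are available and natural in $g$, and the five lemma promotes the homology isomorphism $g$ to $g^{\otimes n}$ by induction on $n$. Second, because $A^{\otimes m}$ is a bounded-below complex of free, hence projective, modules, the functor $Hom(A^{\otimes m},-)$ carries quasi-isomorphisms to quasi-isomorphisms: concretely, $Hom(A^{\otimes m},\mathrm{cone}(g^{\otimes n}))=\mathrm{cone}(Hom(A^{\otimes m},g^{\otimes n}))$ is the Hom of a projective complex into the acyclic complex $\mathrm{cone}(g^{\otimes n})$, hence is acyclic, so $Hom(A^{\otimes m},g^{\otimes n})$ is a cohomology isomorphism.

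The step I expect to be the main obstacle is the first half of the argument under (i): upgrading the homology isomorphism $g$ to a homology isomorphism $g^{\otimes n}$, since this is where the flatness bookkeeping in the iterated Künneth sequences must be controlled carefully (swapping one tensor factor at a time and keeping track of which factors are known to be flat). Once $g^{\otimes n}$ is known to be a quasi-isomorphism — or, as in (ii), is presented as a split inclusion with Hom-acyclic complement — the passage through $Hom(A^{\otimes m},-)$ is purely formal. This is precisely why hypothesis (ii) is the more robust formulation: it supplies the conclusion of the hard step directly through the acyclicity of $Hom(A^{\otimes k},X(n))$, bypassing any Künneth analysis.
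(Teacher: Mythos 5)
The paper offers no proof to compare against (it is explicitly ``left to the reader''), so I can only judge your argument on its own terms. Your verification that $\tilde g$ is a cochain map is correct, and your treatment of hypothesis (ii) is also correct, granting the necessary (and surely intended) reading that the splitting $B^{\otimes n}=A^{\otimes n}\oplus X(n)$ is one for which $g^{\otimes n}$ is the inclusion of the first summand; as literally written, (ii) does not tie the splitting to $g$, and without that tie it says nothing about the particular map $Hom(A^{\otimes m},g^{\otimes n})$. The second half of your argument for (i) --- that $Hom(A^{\otimes m},-)$ sends acyclic complexes to acyclic complexes because $A^{\otimes m}$ is a bounded-below complex of projectives --- is standard and fine.

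The genuine gap is exactly the step you flag, and it is not a matter of careful bookkeeping: the promotion of the quasi-isomorphism $g$ to $g^{\otimes n}$ fails under hypothesis (i) alone. Freeness of $A$ gives no control over tensor powers of $B$: factoring $g^{\otimes n}$ as a composite of maps $\mathbf{1}^{\otimes i}\otimes g\otimes\mathbf{1}^{\otimes j}$, the cone of such a map is $A^{\otimes i}\otimes\mathrm{cone}(g)\otimes B^{\otimes j}$; the middle factor is acyclic and $A^{\otimes i}$ is free, but $B^{\otimes j}$ need not be flat, and an acyclic complex tensored with a non-flat complex need not be acyclic. Concretely, take $R=\mathbb{Z}$, let $A=\mathbb{Z}$ be concentrated in degree $0$ with zero differential, let $X$ be the acyclic complex $\mathbb{Z}\overset{2}{\to}\mathbb{Z}\to\mathbb{Z}/2$ in degrees $2,1,0$, let $B=A\oplus X$, and let $g$ be the summand inclusion (a chain map and homology isomorphism, with $A$ free). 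The exact sequence $0\to\mathbb{Z}/2\otimes X\to X\otimes X\to (X/X_{0})\otimes X\to 0$ gives $H_{2}(X\otimes X)=\mathbb{Z}/2$, so $H_{2}(B^{\otimes 2})\neq 0=H_{2}(A^{\otimes 2})$ and $Hom(A,g^{\otimes 2})$ is not a cohomology isomorphism in bidegree $(2,1)$. Thus your K\"unneth/five-lemma step cannot be carried out from hypothesis (i) as stated; it requires an additional assumption --- e.g.\ $B$ degreewise flat (so that $\mathrm{cone}(g)$ is a bounded-below acyclic complex of flats and each $A^{\otimes i}\otimes\mathrm{cone}(g)\otimes B^{\otimes j}$ is acyclic), or $R$ a field, or precisely the splitting condition (ii), which, as you observe, hands you the conclusion of this step directly. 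If you retain (i), you should state and use such an extra hypothesis explicitly.
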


The proof is left to the reader.

\begin{theorem}
[\textbf{The} \textbf{Transfer}]\label{transfer}Let $\left(  A,d_{A}\right)  $
be a DGM, let $(B,d_{B},\omega_{B})$ be an $A_{\infty}$-bialgebra, and let
$g:A\rightarrow B$ be a chain map/homology isomorphism. If $\tilde{g}$ is a
cohomology isomorphism, then

\begin{enumerate}
\item[\textit{(i)}] (Existence) $g$ induces an $A_{\infty}$-bialgebra
structure $\omega_{A}=\{\omega_{A}^{n,m}\}$ on $A$, and extends to a map
$G=\{g_{m}^{n}\mid g_{1}^{1}=g\}:A\Rightarrow B$\ of $A_{\infty}$-bialgebras.

\item[\textit{(ii)}] (Uniqueness) $\left(  \omega_{A},G\right)  $ is unique up
to isomorphism, i.e., if $\left(  \omega_{A},G\right)  $ and $\left(
\bar{\omega}_{A},\bar{G}\right)  $ are induced by chain homotopic maps $g$ and
$\bar{g}$, there is an isomorphism\ of $A_{\infty}$-bialgebras $\Phi:\left(
A,\bar{\omega}_{A}\right)  \Rightarrow\left(  A,\omega_{A}\right)  $ and a
chain homotopy $T:\bar{G}\simeq G\circ\Phi.$
\end{enumerate}
\end{theorem}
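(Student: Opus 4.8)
The plan is to build the pair $(\omega_A, G)$ by a single induction on the total degree $N = m+n$, solving at each stage the bidegree-$(n,m)$ components of the two defining equations $\omega_A \circledcirc \omega_A = 0$ and $\ominus(\omega_B, G, \omega_A) = 0$ simultaneously. Write $\nabla_A$ for the Hom-differential on $U_A$ induced by $d_A = \omega_A^{1,1}$, and $g^{\ast}(v) = v \circ g^{\otimes m}$ for $v \in (U_B)_{n,m}$. Isolating the terms linear in the top-degree unknowns, the first equation has bidegree-$(n,m)$ component $\nabla_A \omega_A^{n,m} + P^{n,m} = 0$, where $P^{n,m} \in (U_A)_{n,m}$ is a universal expression in the $\omega_A^{n',m'}$ with $m'+n' < N$; the second reads $\nabla g_m^n = \tilde g(\omega_A^{n,m}) + g^{\ast}(\omega_B^{n,m}) + R^{n,m}$, with $R^{n,m} \in (U_{A,B})_{n,m}$ an expression in $\omega_A^{<N}, \omega_B^{<N}, g^{<N}$ alone. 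The base case $(n,m)=(1,1)$ holds with $\omega_A^{1,1} = d_A$ and $g_1^1 = g$, since $A$ is a DGM and $g$ is a chain map, so $\nabla g = 0$.

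For the inductive step, assume all relations of total degree $< N$ hold. A standard computation using $\nabla_A^2 = 0$ and the lower relations shows $P^{n,m}$ is a $\nabla_A$-cocycle. The crux, and the main obstacle, is a Bianchi-type identity: since the relative differential $\partial$ on $\mathcal{JJ}_\infty$ satisfies $\partial^2 = 0$ (equivalently, $\lambda$ and $\rho$ act associatively), the operators $\circledcirc$, $\ominus$ and $\nabla$ are linked by an identity which, evaluated in degree $N$ after provisionally setting $\omega_A^{n,m} = g_m^n = 0$, yields $\nabla S^{n,m} = \pm\, \tilde g(P^{n,m}) \pm g^{\ast}\big( (\omega_B \circledcirc \omega_B)^{n,m} \big)$, where $S^{n,m} = g^{\ast}(\omega_B^{n,m}) + R^{n,m}$ is the value of $\ominus$ computed from the lower data. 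The delicate point is verifying this identity in the non-bilinear matrad setting, where the cross terms must reorganize exactly into $\nabla$ of something plus $\tilde g$ of the structure obstruction. Because $\omega_B$ is a genuine $A_\infty$-bialgebra, $(\omega_B \circledcirc \omega_B)^{n,m} = 0$, and hence $\tilde g(P^{n,m})$ is a $\nabla$-coboundary. Since $\tilde g$ is a cohomology isomorphism, injectivity of $\tilde g_{\ast}$ forces $[P^{n,m}] = 0$ in $H(U_A)$, so a solution $\omega_A^{n,m} = \omega_0$ of the structure relation exists.

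It remains to solve the morphism relation. Any cocycle $z \in (U_A)_{n,m}$ may be added to $\omega_0$ without disturbing the structure relation, so set $\omega_A^{n,m} = \omega_0 + z$. Using $\tilde g \nabla_A = \nabla \tilde g$ together with $\nabla_A \omega_0 = -P^{n,m}$ and the identity above, the element $W := \tilde g(\omega_0) + S^{n,m}$ is a $\nabla$-cocycle, and we must arrange that $W + \tilde g(z)$ be a $\nabla$-coboundary. Because $\tilde g_{\ast}$ is surjective, choose the cocycle $z$ with $\tilde g_{\ast}[z] = -[W]$; then $W + \tilde g(z) = \nabla c$ for some $c$, and $g_m^n := \pm c$ solves the morphism relation at bidegree $(n,m)$. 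This completes the induction and proves (i).

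For the uniqueness statement (ii), suppose $(\omega_A, G)$ and $(\bar\omega_A, \bar G)$ arise from chain-homotopic maps $g \simeq \bar g$. A chosen homotopy induces a cochain homotopy $\tilde g \simeq \tilde{\bar g} \colon U_A \to U_{A,B}$. I would construct the isomorphism $\Phi = \{\phi_m^n \mid \phi_1^1 = \mathrm{id}_A\} \colon (A, \bar\omega_A) \Rightarrow (A, \omega_A)$ and the homotopy $T \colon \bar G \simeq G \circ \Phi$ by an analogous induction on $N$, in which the equations for an isomorphism of $A_\infty$-bialgebras and for a homotopy of morphisms again reduce to solving $\nabla(\text{unknown}) = (\text{cocycle built from lower data})$, the obstruction class being killed using the cochain homotopy $\tilde g \simeq \tilde{\bar g}$ together with the fact that $\tilde g_{\ast}$ is an isomorphism. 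Geometrically this amounts to lifting the homotopy $g \simeq \bar g$ across the cylinder $JJ \times I$, whose cellular chains resolve a homotopy of relative-matrad maps; the contractibility of the bimultiplihedra guarantees that no higher obstruction survives.
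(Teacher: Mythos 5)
Your proposal is correct and is essentially the paper's argument: the same induction on $m+n$, the same obstruction cocycle (your $P^{n,m}$ is the paper's $z=\alpha_A(C_*(\partial KK_{n,m}))$, your $S^{n,m}$ its $\varphi$), the same use of injectivity of $\tilde g_*$ to kill $[P^{n,m}]$ and of surjectivity to adjust $\omega_A^{n,m}$ by a cocycle so that the morphism relation becomes solvable, and the same sketch-level treatment of uniqueness. The only real difference is packaging: the paper phrases everything as extending chain maps $C_*(KK)\to U_A$ and $C_*(JJ)\to U_{A,B}$ cell by cell, so the ``Bianchi-type identity'' you flag as delicate is just $\partial^2=0$ applied to $\partial JJ_{n,m}\smallsetminus\operatorname{int}(KK_{n,m}\times 1)$, whose boundary is $\partial KK_{n,m}\times 1$.
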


\begin{proof}
We obtain the desired structures by simultaneously constructing a map of
matrads $\alpha_{A}:C_{\ast}\left(  KK\right)  \rightarrow U_{A}$ and a map of
relative matrads $\beta:C_{\ast}\left(  JJ\right)  \rightarrow U_{A,B}$.
Thinking of $JJ_{n,m}$ as a subdivision of the cylinder $KK_{n,m}\times I,$
identify the top dimensional cells of $KK_{n,m}$ and $JJ_{n,m}$ with
$\theta_{m}^{n}$ and $\mathfrak{f}_{m}^{n}$, and the faces $KK_{n,m}\times0$
and $KK_{n,m}\times1$ of $JJ_{n,m}$ with $\theta_{m}^{n}\left(  \mathfrak{f}
_{1}^{1}\right)  ^{\otimes m}$ and $\left(  \mathfrak{f}_{1}^{1}\right)
^{\otimes n}\theta_{m}^{n},$ respectively. By hypothesis, there is a map of
matrads $\alpha_{B}:C_{\ast}(KK)\rightarrow U_{B}$ such that $\alpha
_{B}(\theta_{m}^{n})=\omega_{B}^{n,m}.$

To initialize the induction, define $\mathcal{\beta}:C_{\ast}\left(
JJ_{1,1}\right)  \rightarrow Hom^{0}\left(  A,B\right)  $ by $\beta\left(
\mathfrak{f}_{1}^{1}\right)  =g_{1}^{1}=g$, and extend $\mathcal{\beta}$ to
$C_{\ast}\left(  JJ_{1,2}\right)  \rightarrow Hom^{1}\left(  A^{\otimes
2},B\right)  $ and $C_{\ast}\left(  JJ_{2,1}\right)  \rightarrow
Hom^{1}\left(  A,B^{\otimes2}\right)  $ in the following way: On the vertices
$\theta_{2}^{1}\left(  \mathfrak{f}_{1}^{1}\otimes\mathfrak{f}_{1}^{1}\right)
\in JJ_{1,2}$ and $\theta_{1}^{2}\mathfrak{f}_{1}^{1}\in JJ_{2,1}$, define
$\beta\left(  \theta_{2}^{1}\left(  \mathfrak{f}_{1}^{1}\otimes\mathfrak{f}
_{1}^{1}\right)  \right)  =\omega_{B}^{1,2}\left(  g\otimes g\right)  $ and
$\beta\left(  \theta_{1}^{2}\mathfrak{f}_{1}^{1}\right)  =\omega_{B}^{2,1}g.$
Since $\omega_{B}^{1,2}\left(  g\otimes g\right)  $ and $\omega_{B}^{2,1}g$
are $\nabla$-cocycles, and $\tilde{g}_{\ast}$ is an isomorphism, there exist
cocycles $\omega_{A}^{1,2}$ and $\omega_{A}^{2,1}$ in $U_{A}$ such that
\[
\tilde{g}_{\ast}[\omega_{A}^{1,2}]=[\omega_{B}^{1,2}\left(  g\otimes g\right)
]\text{ \ and \ }\tilde{g}_{\ast}[\omega_{A}^{2,1}]=[\omega_{B}^{2,1}g].
\]
Thus $\left[  \omega_{B}^{1,2}\left(  g\otimes g\right)  -g\omega_{A}
^{1,2}\right]  =\left[  \omega_{B}^{2,1}g-\left(  g\otimes g\right)
\omega_{A}^{2,1}\right]  =0,$ and there exist cochains $g_{2}^{1}$ and
$g_{1}^{2}$ in $U_{A,B}$ such that
\[
\nabla g_{2}^{1}=\omega_{B}^{1,2}\left(  g\otimes g\right)  -g\omega_{A}
^{1,2}\text{ \ and \ }\nabla g_{1}^{2}=\omega_{B}^{2,1}g-\left(  g\otimes
g\right)  \omega_{A}^{2,1}.
\]
For $m=1,2$ and $n=3-m,$ define $\alpha_{A}:C_{\ast}\left(  KK_{n,m}\right)
\rightarrow Hom\left(  A^{\otimes m},A^{\otimes n}\right)  $ by $\alpha
_{A}(\theta_{m}^{n})=\omega_{A}^{n,m}$, and define $\beta:C_{\ast}\left(
JJ_{n,m}\right)  \rightarrow Hom\left(  A^{\otimes m},B^{\otimes n}\right)  $
by
\[
\begin{array}
[c]{rllll}
\beta( \mathfrak{f}_{m}^{n}) & = & g_{m}^{n} \vspace{1mm} &  & \\
\beta( \mathfrak{f}_{1}^{1}\, \theta_{2}^{1}) & = & g\, \omega_{A}^{1,2} & (
m=2 ) \vspace{1mm} & \\
\beta( ( \mathfrak{f}_{1}^{1}\otimes\mathfrak{f}_{1}^{1})\, \theta_{1}^{2}) &
= & ( g\otimes g )\, \omega_{A}^{2,1} & (m=1). &
\end{array}
\
\]

Inductively, given $\left(  m,n\right)  ,$ $m+n\geq4,$ assume that for
$i+j<m+n$ there exists a map of matrads $\alpha_{A}:C_{\ast}\left(
KK_{j,i}\right)  \rightarrow Hom\left(  A^{\otimes i},A^{\otimes j}\right)  $
and a map of relative matrads $\beta:C_{\ast}\left(  JJ_{j,i}\right)
\rightarrow Hom\left(  A^{\otimes i},B^{\otimes j}\right)  $ such that
$\alpha_{A}(\theta_{i}^{j})=\omega_{A}^{j,i}$ and $\beta(\mathfrak{f}_{i}
^{j})=g_{i}^{j}$. Thus we are given chain maps $\alpha_{A}:C_{\ast}\left(
\partial KK_{n,m}\right)  \rightarrow Hom\left(  A^{\otimes m},A^{\otimes
n}\right)  $ and $\beta:C_{\ast}\left(  \partial JJ_{n,m}\smallsetminus
\operatorname*{int}KK_{n,m}\times1\right)  \rightarrow Hom\left(  A^{\otimes
m},B^{\otimes n}\right)  ;$ we wish to extend $\alpha_{A}$ to the top cell
$\theta_{m}^{n}$ of $KK_{n,m}$, and $\beta$ to the codimension 1 cell $\left(
\mathfrak{f}_{1}^{1}\right)  ^{\otimes n}\theta_{m}^{n}$ and the top cell
$\mathfrak{f}_{m}^{n}$ of $JJ_{n,m}$. Since $\alpha_{A}$ is a map of matrads,
the components of the cocycle
\[
z=\alpha_{A}\left(  C_{\ast}(\partial KK_{n,m})\right)  \in Hom^{m+n-4}\left(
A^{\otimes m},A^{\otimes n}\right)
\]
are expressed in terms of $\omega_{A}^{j,i}$ with $i+j<m+n;$ similarly, since
$\beta$ is a map of relative matrads, the components of the cochain
\[
\varphi=\beta\left(  C_{\ast}(\partial JJ_{n,m}\smallsetminus
\operatorname*{int}KK_{n,m}\times1)\right)  \in Hom^{m+n-3}\left(  A^{\otimes
m},B^{\otimes n}\right)
\]
are expressed in terms of $\omega_{B},$ $\omega_{A}^{j,i}$ and $g_{i}^{j}$
with $i+j<m+n.$ Clearly $\tilde{g}\left(  z\right)  =\nabla\varphi;$ and
$\left[  z\right]  =\left[  0\right]  $ since $\tilde{g}$ is a homology
isomorphism. Now choose a cochain $b\in Hom^{m+n-3}\left(  A^{\otimes
m},A^{\otimes n}\right)  $ such that $\nabla b=z.$ Then
\[
\nabla\left(  \tilde{g}\left(  b\right)  -\varphi\right)  =\nabla\tilde
{g}\left(  b\right)  -\tilde{g}\left(  z\right)  =0.
\]
Choose a class representative $u\in\tilde{g}_{\ast}^{-1}\left[  \tilde
{g}\left(  b\right)  -\varphi\right]  ,\ $set $\omega_{A}^{n,m}=b-u,$ and
define $\alpha_{A}\left(  \theta_{m}^{n}\right)  =\omega_{A}^{n,m}.$ Then
$\left[  \tilde{g}\left(  \omega_{A}^{n,m}\right)  -\varphi\right]  =\left[
\tilde{g}\left(  b-u\right)  -\varphi\right]  =\left[  \tilde{g}\left(
b\right)  -\varphi\right]  -\left[  \tilde{g}\left(  u\right)  \right]
=\left[  0\right]  .$ Choose a cochain $g_{m}^{n}\in Hom^{m+n-2}\left(
A^{\otimes m},B^{\otimes n}\right)  $ such that
\[
\nabla g_{m}^{n}=g^{\otimes n}\omega_{A}^{n,m}-\varphi,
\]
and define 
$\beta\left(  \mathfrak{f}_{m}^{n}\right)  =g_{m}^{n}.$ To extend
$\beta$ as a map of relative matrads, define
\\
 $\beta  \! \left(\!  \left(
\mathfrak{f}_{1}^{1}\right)  ^{\otimes n}\theta_{m}^{n}\right)\!  =g^{\otimes
n}\omega_{A}^{n,m}.$ Passing to the limit we obtain the desired maps
$\alpha_{A}$ and $\beta.$

Furthermore, if chain maps $\bar{\alpha}_{A}$ and $\bar{\beta}$ are defined in
terms of different choices, beginning with a chain map $\bar{g}$ chain
homotopic to $g,$ let $\bar{\omega}_{A}=\operatorname{Im}\bar{\alpha}_{A}$ and
$\bar{G}=\operatorname{Im}\bar{\beta}.$ There is an inductively defined
isomorphism $\Phi=\sum\phi_{m}^{n}:\left(  A,\bar{\omega}_{A}\right)
\Rightarrow\left(  A,\omega_{A}\right)  $ with $\phi_{1}^{1}=\mathbf{1},$ and
a chain homotopy $T:\tilde{G}\simeq G\circ\Phi.$ To initialize the induction,
set $\phi_{1}^{1}=\mathbf{1}${, and note that}
\[
\nabla{g}_{2}^{1}=g{\omega}_{A}^{1,2}-{\omega}_{B}^{1,2}(g\otimes g)\text{ and
}\nabla\bar{g}_{2}^{1}={\bar{g}\bar{\omega}}_{A}^{1,2}-{\omega}_{B}^{1,2}
(\bar{g}\otimes\bar{g}).
\]
Let $s:\bar{g}\simeq g;$ then $c_{2}^{1}={\omega}_{B}^{1,2}\left(  s\otimes
g+\bar{g}\otimes s\right)  $ satisfies
\[
\nabla c_{2}^{1}={\omega}_{B}^{1,2}(g\otimes g)-{\omega}_{B}^{1,2}(\bar
{g}\otimes\bar{g}).
\]
Hence
\[
\nabla(g_{2}^{1}-\bar{g}_{2}^{1}+c_{2}^{1})=g{\omega}_{A}^{1,2}-\bar{g}
{\bar{\omega}}_{A}^{1,2}
\]
and
\[
\bar{g}({\omega}_{A}^{1,2}-{\bar{\omega}}_{A}^{1,2})=\nabla(g_{2}^{1}-\bar
{g}_{2}^{1}+c_{2}^{1}-s\,\omega_{A}^{1,2}).
\]
Consequently, there is $\phi_{2}^{1}:A^{\otimes2}\rightarrow A$ such that
$\nabla\phi_{2}^{1}={\omega}_{A}^{1,2}-${$\bar{\omega}$}$_{A}^{1,2};$ and, as
above, $\phi_{2}^{1}$ may be chosen so that $\bar{g}\phi_{2}^{1}-(g_{2}
^{1}-\bar{g}_{2}^{1}+c_{2}^{1}-s\,\omega_{A}^{1,2})$ is cohomologous to zero.
Thus there is a component $t_{2}^{1}$ of $T$ such that
\[
\nabla(t_{2}^{1})=\bar{g}\phi_{2}^{1}-(g_{2}^{1}-g_{2}^{1}+c_{2}^{1}
+s\,\omega_{A}^{1,2}).
\]

\end{proof}

We shall refer to the algorithm in the proof of the Transfer Theorem as the
\emph{Transfer Algorithm}. Since $\tilde{g}$ is a homology isomorphism
whenever $A$ is free (cf. Proposition \ref{homology-iso}) we have:

\begin{corollary}
\label{m-bialgebra}Let $\left(  A,d_{A}\right)  $ be a free DGM, let
$(B,d_{B},\omega_{B})$ be an $A_{\infty}$-bialgebra, and let $g:A\rightarrow
B$ be a chain map/homology isomorphism. Then

\begin{enumerate}
\item[\textit{(i)}] (Existence) $g$ induces an $A_{\infty}$-bialgebra
structure $\omega_{A}$ on $A$, and extends to a map $G:A\Rightarrow B$\ of
$A_{\infty}$-bialgebras.

\item[\textit{(ii)}] (Uniqueness) $\left(  \omega_{A},G\right)  $ is unique up
to isomorphism.
\end{enumerate}
\end{corollary}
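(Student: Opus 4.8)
The plan is to deduce the corollary directly from Theorem \ref{transfer} by verifying its one additional hypothesis. Theorem \ref{transfer} assumes that $(A,d_A)$ is a DGM, that $(B,d_B,\omega_B)$ is an $A_\infty$-bialgebra, that $g:A\to B$ is a chain map and homology isomorphism, \emph{and} that the induced cochain map $\tilde g:U_A\to U_{A,B}$ is a cohomology isomorphism. The hypotheses of the corollary match those of the theorem verbatim, except that freeness of $A$ replaces the cohomology-isomorphism condition on $\tilde g$. So the entire task reduces to showing that freeness of $A$ forces $\tilde g$ to be a cohomology isomorphism.

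First I would invoke Proposition \ref{homology-iso}. Since $g$ is a chain map, that proposition already guarantees that $\tilde g(u)=g^{\otimes n}u$ is a cochain map $U_A\to U_{A,B}$; and since $g$ is in addition a homology isomorphism while $A$ is free as an $R$-module, part \textit{(i)} of the proposition yields that $\tilde g$ is a cohomology isomorphism. Conceptually, freeness makes each $A^{\otimes m}$ projective, so $\operatorname{Hom}(A^{\otimes m},-)$ is exact; post-composition with the homology isomorphism $g^{\otimes n}$ then induces an isomorphism on $\operatorname{Hom}(A^{\otimes m},-)$-cohomology componentwise. This is precisely the observation recorded in the remark immediately preceding the corollary.

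With $\tilde g$ established as a cohomology isomorphism, Theorem \ref{transfer} applies without modification. Its conclusion \textit{(i)} produces the induced $A_\infty$-bialgebra structure $\omega_A$ on $A$ together with the extending morphism $G:A\Rightarrow B$ satisfying $g_1^1=g$, and its conclusion \textit{(ii)} provides uniqueness of $(\omega_A,G)$ up to isomorphism. These are exactly statements \textit{(i)} and \textit{(ii)} of the corollary, so nothing further is required.

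Because the corollary is a straight specialization, there is no genuine obstacle at this level: all the substance lives in the inductive Transfer Algorithm of Theorem \ref{transfer} and in Proposition \ref{homology-iso}. The only point requiring care is confirming that freeness of $A$ is the \emph{correct} hypothesis, i.e., that it is exactly what guarantees exactness of $\operatorname{Hom}(A^{\otimes m},-)$ and hence the cohomology-isomorphism property of $\tilde g$ over a general commutative ground ring $R$, where a na\"ive K\"unneth-type argument for $g^{\otimes n}$ could otherwise fail. Once that is in hand, the reduction to Theorem \ref{transfer} is immediate.
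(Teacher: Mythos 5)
Your proposal is correct and follows exactly the paper's route: the corollary is obtained by noting that freeness of $A$ makes $\tilde g$ a cohomology isomorphism via Proposition \ref{homology-iso}(i), after which Theorem \ref{transfer} applies verbatim. The paper records precisely this reduction in the single sentence preceding the corollary, so nothing further is needed.
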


Given a chain complex $B$ of (not necessarily free) $R$-modules, there is
always a chain complex of free $R$-modules $\left(  A,d_{A}\right)  $, and a
homology isomorphism $g:A\rightarrow B.$ To see this, let $\left(
RH:\cdots\rightarrow R_{1}H\rightarrow R_{0}H\overset{\rho}{\rightarrow
}H,d\right)  $ be a free $R$-module resolution of $H=H_{\ast}\left(  B\right)
.$ Since $R_{0}H$ is projective, there is a cycle-selecting homomorphism
$g_{0}^{\prime}:R_{0}H\rightarrow Z\left(  B\right)  $ lifting $\rho$ through
the projection $Z\left(  B\right)  \rightarrow H$, and extending to a chain map
$g_{0}:\left(  RH,0\right)  \rightarrow\left(  B,d_{B}\right)  .$ If
$RH:0\rightarrow R_{1}H\rightarrow R_{0}H\rightarrow H$ is a short $R$-module
resolution of $H,$ then $g_{0}$ extends to a homology isomorphism $g:\left(
RH,d+h\right)  \rightarrow\left(  B,d_{B}\right)  $ with $\left(
A,d_{A}\right)  =\left(  RH,d\right)  .$ Otherwise, there is a perturbation
$h$ of $d$ such that $g:\left(  RH,d+h\right)  \rightarrow\left(
B,d_{B}\right)  $ is a homology isomorphism with $\left(  A,d_{A}\right)
=\left(  RH,d+h\right)  $ (see \cite{Berikashvili}, \cite{Saneblidze1}). Thus
an $A_{\infty}$-structure on $B$ always transfers to an $A_{\infty}$-structure
on $\left(  RH,d+h\right)  $ via Corollary \ref{m-bialgebra}, and we obtain
our main result concerning the transfer of $A_{\infty}$-structure to homology:

\begin{theorem}
\label{AAHopf} \textit{Let }$B$\textit{\ be an }$A_{\infty}$\textit{-bialgebra
with homology }$H=H_{\ast}\left(  B\right)  $\textit{, let }$\left(
RH,d\right)  $\textit{\ be a free }$R$\textit{-module resolution of }$H,$
\textit{and let }$h$\textit{\ be} \textit{a perturbation of }$d$\textit{\ such
that }$g:\left(  RH,d+h\right)  \rightarrow\left(  B,d_{B}\right)
$\textit{\ is a homology isomorphism. Then}

\begin{enumerate}
\item[\textit{(i)}] (Existence) $g$ induces an $A_{\infty}$-bialgebra
structure $\omega_{RH}$ on $RH$, and extends to a map $G:RH\Rightarrow B$ of
$A_{\infty}$-bialgebras.

\item[\textit{(ii)}] (Uniqueness) $\left(  \omega_{RH},G\right)  $ is unique
up to isomorphism.
\end{enumerate}
\end{theorem}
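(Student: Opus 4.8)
The plan is to deduce Theorem \ref{AAHopf} directly from Corollary \ref{m-bialgebra} by taking the free DGM to be $A=(RH,d+h)$. The paragraph preceding the statement already supplies the chain map / homology isomorphism $g:(RH,d+h)\rightarrow(B,d_{B})$ (constructed from a cycle-selecting lift $g_{0}^{\prime}$ together with a perturbation $h$); what remains is simply to check that the hypotheses of Corollary \ref{m-bialgebra} are met and then to quote it.

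First I would verify that $(RH,d+h)$ is a free DGM in the required sense. Since $(RH,d)$ is a \emph{free} $R$-module resolution of $H$, the underlying graded module $RH=\bigoplus_{i}R_{i}H$ is a direct sum of free $R$-modules, hence free. Freeness is a property of the underlying graded module alone, so replacing the differential $d$ by the perturbed differential $d+h$ leaves it intact: $(RH,d+h)$ is again a free DGM. This is the one point that must be observed, and it is immediate.

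Next I would apply Corollary \ref{m-bialgebra} with $(A,d_{A})=(RH,d+h)$, the $A_{\infty}$-bialgebra $(B,d_{B},\omega_{B})$, and the given $g$. Because $RH$ is free as an $R$-module, Proposition \ref{homology-iso}(i) guarantees that the induced map $\tilde{g}:U_{RH}\rightarrow U_{RH,B}$ is a cohomology isomorphism, so the Transfer Theorem \ref{transfer} applies without obstruction. Its existence clause produces the $A_{\infty}$-bialgebra structure $\omega_{RH}$ on $RH$ together with the extension $G:RH\Rightarrow B$, and its uniqueness clause yields conclusion (ii): if $(\omega_{RH},G)$ and $(\bar{\omega}_{RH},\bar{G})$ arise from chain-homotopic homology isomorphisms $g\simeq\bar{g}$, then there is an isomorphism of $A_{\infty}$-bialgebras $\Phi:(RH,\bar{\omega}_{RH})\Rightarrow(RH,\omega_{RH})$ and a chain homotopy $T:\bar{G}\simeq G\circ\Phi$.

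Since all of the substantive work is carried out by the Transfer Algorithm inside the proof of Theorem \ref{transfer}, there is no genuine obstacle here; the theorem is essentially a packaging of Corollary \ref{m-bialgebra} in the language of resolutions. The only items worth flagging are the freeness observation above and the fact, established in the preceding paragraph via \cite{Berikashvili} and \cite{Saneblidze1}, that a perturbation $h$ making $g$ a homology isomorphism always exists, so that the hypotheses are never vacuous.
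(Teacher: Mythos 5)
Your proposal is correct and coincides with the paper's own argument: the paper derives Theorem \ref{AAHopf} in the paragraph immediately preceding it by observing that $(RH,d+h)$ is free as a graded $R$-module and that $g$ is a homology isomorphism, and then invoking Corollary \ref{m-bialgebra} (hence Proposition \ref{homology-iso}(i) and the Transfer Theorem \ref{transfer}). The freeness observation and the existence of the perturbation $h$ are exactly the two points the paper also flags.
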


\begin{remark}
Note that $A_{\infty}$-bialgebra structures induced by the Transfer Algorithm
are isomorphic for all choices of the map $g:\left(  RH,d+h\right)
\rightarrow\left(  B,d_{B}\right)  ,$ and we obtain an isomorphism class of
$A_{\infty}$-bialgebra structures on $RH.$
\end{remark}

\begin{remark}
\label{twoinfty} When $H=H_{\ast}(B)$ is a free module, we recover the
classical results of Kadeishvili \cite{Kadeishvili1}, Markl \cite{Markl1}, and
others, which transfer a DG (co)algebra structure to an $A_{\infty}
$-(co)algebra structure on homology, by setting $RH=H$. Furthermore, any pair
of $A_{\infty}$-(co)algebra structures $\left\{  {\omega}_{H}^{n,1}\right\}
_{n\geq1}$ and $\left\{  {\omega}_{H}^{1,m}\right\}  _{m\geq1}$ on $H$ induced
by the same cycle-selecting map $g:H\rightarrow B$ extend to an $A_{\infty}
$-bialgebra structure $\left(  H,{\omega}_{H}^{n,m}\right)  ,$ by the proof of
Theorem \ref{AAHopf}. For an example of a DGA $B$ whose cohomology $H(B)$ is not free,
and whose DGA structure transfers to an $A_{\infty}$-algebra structure on
$H\left(  B\right)$ via Theorem \ref{AAHopf} along a map $g:H(B) \to B$ having no
right-homotopy inverse, see \cite{Umble2}.
\end{remark}

\section{Applications and Examples}

The applications and examples in this section apply the Transfer Algorithm
given by the proof of Theorem \ref{transfer}. Three kinds of specialized
$A_{\infty}$-bialgebras $\left(  A,\left\{  \omega_{m}^{n}\right\}  \right)  $
are relevant here:

\begin{enumerate}
\item $\omega_{m}^{1}=0$ for $m\geq3$ (the $A_{\infty}$-algebra substructure
is trivial).

\item $\omega_{m}^{n}=0$ for $m,n\geq2$ (all higher order structure is
concentrated in the $A_{\infty}$-algebra and $A_{\infty}$-coalgebra substructures).

\item Conditions (1) and (2) hold simultaneously.
\end{enumerate}

\noindent Of these, $A_{\infty}$-bialgebras of the first and third kind appear
in the applications.

Structure relations defining $A_{\infty}$-bialgebras of the second and third
kind are expressed in terms of the S-U diagonal on associahedra $\Delta_{K}$
\cite{SU2} and have especially nice form. Structure relations of the second
kind were derived in \cite{Umble}. Structure relations in an $A_{\infty}
$-bialgebra $\left(  A,\omega\right)  $ of the third kind with $\omega_{1}
^{1}=0 $, $\mu=\omega_{2}^{1}$ and $\psi^{n}=\omega_{1}^{n}$ are a special
case of those derived in \cite{Umble}, and are given by the formula
\begin{equation}
\left\{  \psi^{n}\mu=\mu^{\otimes n}\Psi^{n}\right\}  _{n\geq2},
\label{multformula}
\end{equation}
where the $n$-ary $A_{\infty}$-coalgebra operation
\[
\Psi^{n}=\left(  \sigma_{n,2}\right)  _{\ast}\iota\left(  \xi\otimes
\xi\right)  \Delta_{K}\left(  e^{n-2}\right)  :A\otimes A\rightarrow\left(
A\otimes A\right)  ^{\otimes n}
\]
is defined in terms of

\begin{itemize}
\item a map $\xi:C_{\ast}(K)\rightarrow Hom(A,TA)$ of operads sending the top
dimensional cell $e^{n-2}\subseteq K_{n}$ to $\psi^{n},$

\item the canonical isomorphism
\[
\iota:Hom\left(  A,A^{\otimes n}\right)  ^{\otimes2}\rightarrow Hom\left(
A^{\otimes2},\left(  A^{\otimes n}\right)  ^{\otimes2}\right)  ,
\]

\item and the induced isomorphism
\[
\left(  \sigma_{n,2}\right)  _{\ast}:Hom\left(  A^{\otimes2},\left(
A^{\otimes n}\right)  ^{\otimes2}\right)  \rightarrow Hom\left(  A^{\otimes
2},\left(  A^{\otimes2}\right)  ^{\otimes n}\right)  .
\]

\end{itemize}

Structure relations defining a morphism $G=\left\{  g^{n}\right\}
:(A,\omega_{A})\Rightarrow(B,\omega_{B})$ between $A_{\infty}$-bialgebras of
the third kind are expressed in terms of the S-U diagonal on multiplihedra
$\Delta_{J}$ \cite{SU2} by the formula
\begin{equation}
\left\{  g^{n}\mu_{A}=\mu_{B}^{\otimes n}\mathbf{g}^{n}\right\}  _{n\geq1},
\label{fmultformula}
\end{equation}
where
\[
\mathbf{g}^{n}=(\sigma_{n,2})_{\ast}\iota(\upsilon\otimes\upsilon)\Delta
_{J}(e^{n-1}):A\otimes A\rightarrow\left(  B\otimes B\right)  ^{\otimes n},
\]
and $\upsilon:C_{\ast}(J)\rightarrow Hom(A,TB)$ is a map of relative
prematrads sending the top dimensional cell $e^{n-1}\subseteq J_{n}$ to
$g^{n}$ (the maps $\left\{  \mathbf{g}^{n}\right\}  $ define the tensor
product morphism $G\otimes G:\left(  A\otimes A,\Psi_{A\otimes A}\right)
\Rightarrow\left(  B\otimes B,\Psi_{B\otimes B}\right)  $).

Given a simply connected topological space $X,$ consider the Moore loop space
$\Omega X$ and the simplicial singular cochain complex $S^{\ast}(\Omega X;R)$.
Under the hypotheses of the Transfer Theorem, the DG bialgebra structure of
$S^{\ast}(\Omega X;R)$ transfers to an $A_{\infty}$-bialgebra structure on
$H^{\ast}(\Omega X;R)$. Our next two theorems apply this principle, and
identify some important $A_{\infty}$-bialgebras of the third kind on loop
space (co)homology.

\begin{theorem}
\label{rational3}If $X$ is simply connected, $H^{\ast}(\Omega X;\mathbb{Q})$
admits an induced $A_{\infty}$-bialgebra structure of the third kind.
\end{theorem}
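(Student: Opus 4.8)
The plan is to apply the Transfer Algorithm of Theorem \ref{transfer} to the DG bialgebra $B=S^{\ast}(\Omega X;\mathbb{Q})$, whose cup product is strictly associative and whose coproduct, being dual to the strictly associative concatenation of Moore loops, is strictly coassociative. Since $A=H^{\ast}(\Omega X;\mathbb{Q})$ is a $\mathbb{Q}$-vector space, hence free as an $R$-module, a cycle-selecting map $g\colon A\to B$ satisfies the hypotheses of Corollary \ref{m-bialgebra}, which transfers the $A_{\infty}$-bialgebra structure to $H^{\ast}(\Omega X;\mathbb{Q})$. It then remains to show that the Transfer Algorithm can be run so that the induced operations $\omega_{m}^{n}$ vanish whenever $n=1$ and $m\geq3$ (triviality of the $A_{\infty}$-algebra substructure) and whenever $m,n\geq2$ (vanishing of the mixed operations); these are exactly the conditions (1) and (2) defining an $A_{\infty}$-bialgebra of the third kind.

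First I would dispose of the $A_{\infty}$-algebra substructure. The homology $H_{\ast}(\Omega X;\mathbb{Q})$ is a connected, cocommutative Hopf algebra, since the diagonal $\Omega X\to\Omega X\times\Omega X$ is a map of monoids; so by the Milnor--Moore and Poincar\'{e}--Birkhoff--Witt theorems it is isomorphic as a coalgebra to the cofree graded-cocommutative coalgebra on the rational homotopy Lie algebra $L=\pi_{\ast}(\Omega X)\otimes\mathbb{Q}$. Dualizing, $H^{\ast}(\Omega X;\mathbb{Q})$ is a \emph{free} graded-commutative algebra, and such algebras are intrinsically formal. Concretely, the $\nabla$-cocycle $z=\alpha_{A}\bigl(C_{\ast}(\partial KK_{1,m})\bigr)$ produced at each stage $m\geq3$ of the induction is a $\nabla$-coboundary whose primitive $b$ may be chosen, together with the class representative $u$, so that $\omega_{A}^{1,m}=b-u=0$. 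This yields condition (1).

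The main obstacle is condition (2): the vanishing of the mixed operations $\omega_{m}^{n}$ with $m,n\geq2$. I would argue by induction on $m+n$, showing that at each mixed bidegree the obstruction cocycle $z=\alpha_{A}\bigl(C_{\ast}(\partial KK_{n,m})\bigr)$ vanishes \emph{as a cochain}, so that the Transfer Algorithm admits the choice $\omega_{A}^{n,m}=0$. Having arranged $\mu=\omega_{2}^{1}$ to be the strict free graded-commutative product and the operations $\psi^{n}=\omega_{1}^{n}$ to be the transferred $A_{\infty}$-coalgebra operations, the faces of $\partial KK_{n,m}$ map under $\alpha_{A}$ to iterated composites of $\mu$ and the $\psi^{n}$, and the resulting boundary is precisely the defect measured by the third-kind relation (\ref{multformula}). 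Over $\mathbb{Q}$ this defect is identically zero, because a free graded-commutative product admits strictly multiplicative cochain representatives for the transferred coproducts; this is the crux of the argument, and it is genuinely a rational phenomenon: over $\mathbb{Z}_{2}$ the analogous obstruction is detected by the action of the Steenrod algebra (as in the space $X_{n}$ of Section 7), and the mixed operations need not vanish.

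Finally, assembling the two vanishing results, the only surviving operations are $\mu=\omega_{2}^{1}$ and the coalgebra operations $\psi^{n}=\omega_{1}^{n}$, which by construction form an $A_{\infty}$-coalgebra compatible with $\mu$ via (\ref{multformula}). Hence the transferred structure is an $A_{\infty}$-bialgebra of the third kind, which is the assertion of Theorem \ref{rational3}; its uniqueness up to isomorphism is inherited directly from Theorem \ref{transfer}.
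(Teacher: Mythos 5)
Your outline (transfer, then kill the higher products using freeness of the rational cohomology algebra, then kill the mixed operations) matches the paper's strategy, but there is a genuine gap at exactly the point you flag as the crux. You take $B=S^{\ast}(\Omega X;\mathbb{Q})$, whose cup product is associative but \emph{not} graded-commutative at the cochain level. Since $H=H^{\ast}(\Omega X;\mathbb{Q})$ is a free graded-commutative algebra, a multiplicative cocycle-selecting map $H\rightarrow B$ would have to respect the graded commutativity relations among products of generators, and these fail in $S^{\ast}(\Omega X;\mathbb{Q})$; so the ``strictly multiplicative cochain representatives'' on which your vanishing argument rests do not exist in your chosen model. The paper's proof avoids this by replacing $S^{\ast}(\Omega X;\mathbb{Q})$ with the bar construction $B\mathcal{A}_{X}$ of a commutative Sullivan (or Halperin--Stasheff) model, equipped with the shuffle product: this is a genuinely commutative DG Hopf algebra cochain model for $\Omega X$, its cohomology is free graded-commutative by Hopf's theorem, and a \emph{multiplicative} cocycle-selecting map $g_{1}^{1}$ then exists precisely because $H$ is free. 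Only with such a $g_{1}^{1}$ can one take $\omega_{m}^{1}=0$ for $m\geq3$ and $g_{m}^{1}=0$ for $m\geq2$ outright. Note also that choosing $\omega_{A}^{1,m}=b-u=0$ in the Transfer Algorithm forces $z=\nabla b=\nabla u=0$ \emph{as a cochain}, so ``$z$ is a $\nabla$-coboundary'' is not enough; intrinsic formality of free graded-commutative algebras yields an isomorphism after the fact, not the stagewise vanishing your induction requires.

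The second half of the crux --- the vanishing of $\omega_{m}^{n}$ for $m,n\geq2$ --- is asserted rather than proved. In the paper this is an actual computation: $\psi^{n+1}$ and $g^{n}$ are defined by the Transfer Algorithm only on multiplicative generators and then extended \emph{freely} to decomposables via formulas (\ref{multformula}) and (\ref{fmultformula}); the strict Hopf relation $\psi^{2}\mu=(\mu\otimes\mu)\sigma_{2,2}(\psi^{2}\otimes\psi^{2})$ in $B\mathcal{A}_{X}$ then forces $\beta(\partial\mathfrak{f}_{2}^{2})$ to vanish on $C_{1}(JJ_{2,2}\setminus\operatorname{int}(KK_{2,2}\times1))$, licensing $\omega_{2}^{2}=g_{2}^{2}=0$, and an induction on $n$ handles $\omega_{2}^{n}$ before setting $\omega_{m}^{n}=g_{m}^{n}=0$ for $m\geq3$. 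Your Steenrod-algebra contrast is the right intuition for why this is a rational phenomenon, but it is not a substitute for exhibiting the model in which the obstruction cochains literally vanish. (A minor further point: your Milnor--Moore/PBW dualization argument for freeness of $H^{\ast}(\Omega X;\mathbb{Q})$ needs a finite-type hypothesis; the paper sidesteps this by applying Hopf's theorem directly to the commutative Hopf algebra $H^{\ast}(B\mathcal{A}_{X})$.)
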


\begin{proof}
Let $\mathcal{A}_{X}$ be a free DG commutative algebra cochain model for $X$
over $\mathbb{Q}$ (e.g., Sullivan's minimal or Halperin-Stasheff's filtered
model); then $H^{\ast}\left(  \mathcal{A}_{X}\right)  \approx H^{\ast
}(X;\mathbb{Q}).$ The bar construction $\left(  B=B\mathcal{A}_{X}
,d_{B},\Delta_{B}\right)  $ with shuffle product is a cofree DG commutative
Hopf algebra cochain model for $\Omega X,$ and $H=H^{\ast}(B,d_{B})$ is a Hopf
algebra with induced coproduct $\psi^{2}=\omega_{1}^{2}$ and free graded
commutative product $\mu=\omega_{2}^{1}$ (by a theorem of Hopf). Since $H$ is
a free commutative algebra, there is a multiplicative cocycle-selecting map
$g_{1}^{1}:H\rightarrow B.$ Consequently, we may set $\omega_{n}^{1}=0$ for
all $n\geq3$ and $g_{n}^{1}=0$ for all $n\geq2$, and obtain a trivial
$A_{\infty}$-algebra structure $\left(  H,\mu\right)  $ induced by $g_{1}
^{1}.$ There is an induced $A_{\infty}$-coalgebra structure $\left(
H,\psi^{n}\right)  _{n\geq2}$, and an $A_{\infty}$-coalgebra map $G=\left\{
g^{n}\mid g^{1}=g_{1}^{1}\right\}  _{n\geq1}:H\Rightarrow B$ constructed as
follows: For $n\geq2,$ assume $\psi^{n}$ and $g^{n-1}$ have been constructed,
and apply the Transfer Algorithm to obtain candidates $\omega_{1}^{n+1}$ and
$g_{1}^{n}.$ Restrict $\omega_{1}^{n+1}$ to generators, and let $\psi^{n+1}$ be
the free extension of $\omega_{1}^{n+1}$ to all of $H $ using Formula
\ref{multformula}. Similarly, restrict $g_{1}^{n}$ to generators, and let
$g^{n}$ be the free extension of $g_{1}^{n}$ to all of $H$ using Formula
\ref{fmultformula}.

To complete the proof, we show that all other $A_{\infty}$-bialgebra
operations may be trivially chosen. Refer to the Transfer Algorithm, and note
that the Hopf relation $\psi^{2}\mu=\left(  \mu\otimes\mu\right)  \sigma
_{2,2}\left(  \psi^{2}\otimes\psi^{2}\right)  $ implies $\beta(\partial
\mathfrak{f}_{2}^{2})|_{C_{1}(JJ_{2,2}\setminus\operatorname*{int}
(KK_{2,2}\times1))}=0.$ Thus we may choose $\omega_{2}^{2}=g_{2}^{2}=0\ $so
that $\beta(\partial\mathfrak{f}_{2}^{2})=\beta(\mathfrak{f}_{2}^{2})=0.$
Inductively, assume that $\omega_{2}^{n-1}=g_{2}^{n-1}=0$ for $n\geq3.$ Then
$\beta(\partial\mathfrak{f}_{2}^{n})|_{C_{n-1}(JJ_{n,2}\setminus
\operatorname*{int}(KK_{n,2}\times1))}=0$, and we may choose $\omega_{2}^{n}=0$
and $g_{2}^{n}=0 $ so that $\beta(\partial\mathfrak{f}_{2}^{n})=\beta
(\mathfrak{f}_{2}^{n})=0.$ Finally, for $m\geq3$ set $\omega_{m}^{n}=0$ and
$g_{m}^{n}=0. $ Then $\left(  H,\mu,\psi^{n}\right)  _{n\geq2}$ as an
$A_{\infty}$-bialgebra of the third kind with structure relations given by
Formula \ref{multformula}, and $G$ is a map of $A_{\infty}$-bialgebras
satisfying Formula \ref{fmultformula}.
\end{proof}

Note that the components of the $A_{\infty}$-bialgebra map $G$ in the proof of
Theorem \ref{rational3} are exactly the components of a map of underlying
$A_{\infty}$-coalgebras given by the Transfer Algorithm.

Let $R$ be a PID, and let $X$ be a connected space such that $H_{\ast}(X;R)$ is
torsion free. Then the Bott-Samelson Theorem \cite{Bott} asserts that
$H_{\ast}\left(  \Omega\Sigma X;R\right)  $ is isomorphic as an algebra to the
tensor algebra $T^{a}\tilde{H}_{\ast}(X;R)$ generated by the reduced homology
of $X$, and the adjoint $i:X\rightarrow\Omega\Sigma X$ of the identity
$\mathbf{1}:\Sigma X\rightarrow\Sigma X$ induces the canonical inclusion
$i_{\ast}:\tilde{H}_{\ast}\left(  X;R\right)  \hookrightarrow T^{a}\tilde
{H}_{\ast}(X;R)\approx H_{\ast}\left(  \Omega\Sigma X;R\right)  $. Thus if
$\{\psi^{n}\}_{n\geq2}$ is an $A_{\infty}$-coalgebra structure on ${H}_{\ast
}(X;R),$ the tensor algebra $T^{a}\tilde{H}_{\ast}(X;R)$ admits a canonical
$A_{\infty}$-bialgebra structure of the third kind with respect to the free
extension of each $\psi^{n}$ via Formula \ref{multformula}.

Furthermore, the canonical inclusion $t:X\hookrightarrow\Omega\Sigma X$
induces a DG coalgebra map of simplicial singular chains $t_{\#}:S_{\ast
}(X;R)\rightarrow S_{\ast}(\Omega\Sigma X;R)$, which extends to a homology
isomorphism $t_{\#}:T^{a}\tilde{S}_{\ast}(X;R)\approx S_{\ast}(\Omega\Sigma
X;R)$ of DG Hopf algebras. Thus the induced \emph{Bott-Samelson Isomorphism}
$t_{\ast}:T^{a}\tilde{H}_{\ast}\left(  X;R\right)  \approx H_{\ast}
(\Omega\Sigma X;R)$ is an isomorphism of Hopf algebras (\cite{husemoller},
\cite{KS1}), and $T^{a}\tilde{S}_{\ast}(X;R)$ is a free DG Hopf algebra chain
model for $\Omega\Sigma X.$

\begin{theorem}
\label{B-S}Let $R$ be a PID, and let $X$ be a connected space such that
$H_{\ast}(X;R)$ is torsion free.

\begin{enumerate}
\item[\textit{(i)}] Then $T^{a}\tilde{H}_{\ast}(X;R)$ admits an $A_{\infty}$
-bialgebra structure of the third kind, which is trivial if and only if the
$A_{\infty}$-coalgebra structure of $H_{\ast}(X;R)$ is trivial.

\item[\textit{(ii)}] The Bott-Samelson Isomorphism $t_{\ast}:T^{a}\tilde
{H}_{\ast}(X;R)\approx H_{\ast}(\Omega\Sigma X;R)$ extends to an isomorphism
of $A_{\infty}$-bialgebras.
\end{enumerate}
\end{theorem}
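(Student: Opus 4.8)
The plan is to handle the two parts in sequence, using the Transfer Algorithm of Theorem \ref{transfer}, together with its uniqueness clause, as the main engine, and exploiting that $H_\ast(X;R)$ and $H_\ast(\Omega\Sigma X;R)$ are $R$-free (torsion freeness over a PID).

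For part (i), I would first produce the $A_\infty$-coalgebra structure $\{\psi^n\}_{n\geq2}$ on $H_\ast(X;R)$ by transferring the DG coalgebra structure of $S_\ast(X;R)$; this is legitimate because $H_\ast(X;R)$ is $R$-free, so Corollary \ref{m-bialgebra} applies in its $A_\infty$-coalgebra special case (cf. Remark \ref{twoinfty}). The free extension of each $\psi^n$ via Formula (\ref{multformula}), recorded in the paragraph preceding the theorem, then equips $T^a\tilde H_\ast(X;R)$ with an $A_\infty$-bialgebra structure of the third kind whose only potentially nonzero operations are the concatenation product $\mu$ and the coproducts $\psi^n$. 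For the biconditional, the key point is that Formula (\ref{multformula}) defines $\psi^n$ on a product from its values on the tensor factors, so $\psi^n$ on $T^a\tilde H_\ast(X;R)$ is completely determined by its restriction to the generators $\tilde H_\ast(X;R)$, where it agrees with $\psi^n$ on $H_\ast(X;R)$. Hence, by induction on word length, $\psi^n$ vanishes on $T^a\tilde H_\ast(X;R)$ for all $n\geq3$ if and only if it vanishes on $H_\ast(X;R)$ for all $n\geq3$, which is precisely triviality of the $A_\infty$-coalgebra structure.

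For part (ii), I would set $B_1=T^a\tilde S_\ast(X;R)$ and $B_2=S_\ast(\Omega\Sigma X;R)$, both DG Hopf algebras, and regard the homology isomorphism $t_\#\colon B_1\to B_2$ as a \emph{strict} morphism of $A_\infty$-bialgebras: its only nonzero component is $g_1^1=t_\#$, and the morphism relation $\ominus(\omega_{B_2},t_\#,\omega_{B_1})=0$ reduces to the assertion that $t_\#$ is a DG bialgebra map. Writing $H_1=T^a\tilde H_\ast(X;R)$, $H_2=H_\ast(\Omega\Sigma X;R)$, and $t_\ast$ for the induced Bott-Samelson isomorphism, I would realize the part-(i) structure $\omega_1$ on $H_1$ as the transfer of the DG Hopf structure of $B_1$ along a \emph{multiplicative} cycle-selecting map $g_1^1\colon H_1\to B_1$, exactly as in the proof of Theorem \ref{rational3}: since $H_1$ is a free algebra, multiplicativity of $g_1^1$ forces the $A_\infty$-algebra substructure to be trivial, and Formula (\ref{multformula}) identifies the transferred coalgebra operations with the free extension of part (i). The heart of the argument is then a naturality statement for the Transfer Algorithm. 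Put $\bar g_2=t_\#\circ g_1^1\circ t_\ast^{-1}\colon H_2\to B_2$; since $\bar g_2$ induces $t_\ast\circ\mathrm{id}\circ t_\ast^{-1}=\mathrm{id}$ on homology, it is a cycle-selecting map. Conjugating the transfer data $(\alpha_A,\beta)$ produced for $(H_1,B_1,g_1^1)$ by the isomorphism $t_\ast$ on the source and composing with the strict morphism $t_\#$ on the target yields a matrad map $C_\ast(KK)\to U_{H_2}$ and a relative matrad map $C_\ast(JJ)\to U_{H_2,B_2}$ constituting valid transfer data for $\bar g_2$: compatibility with the left $\omega_{B_2}$-action holds because $t_\#$ intertwines $\omega_{B_1}$ and $\omega_{B_2}$, and compatibility with the right action holds because we conjugated by $t_\ast$. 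Thus the transfer along $\bar g_2$ realizes the push-forward structure $(t_\ast)_\ast\omega_1$, and $t_\ast$ is by construction a strict $A_\infty$-bialgebra isomorphism $(H_1,\omega_1)\to(H_2,(t_\ast)_\ast\omega_1)$.

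To finish part (ii), let $\omega_2$ be the $A_\infty$-bialgebra structure transferred to $H_2$ along any cycle-selecting map $g_2^1$. Because $H_2$ is free, $\bar g_2\simeq g_2^1$ (both are cycle-selecting, hence chain homotopic), so the uniqueness clause of Theorem \ref{transfer}(ii) supplies an isomorphism $\Phi\colon(H_2,(t_\ast)_\ast\omega_1)\Rightarrow(H_2,\omega_2)$ with $\phi_1^1=\mathbf{1}$. The composite $\Phi\circ t_\ast\colon(H_1,\omega_1)\Rightarrow(H_2,\omega_2)$ is then an $A_\infty$-bialgebra isomorphism whose linear component is $\phi_1^1\circ t_\ast=t_\ast$, which is exactly the required extension of the Bott-Samelson isomorphism. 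I expect the main obstacle to be the naturality step of the previous paragraph: verifying that conjugation by the pair $(t_\ast,t_\#)$ carries $\beta$ to a genuine relative matrad map for $\bar g_2$, i.e.\ that it respects every structure map of $\mathcal{J}\mathcal{J}_\infty$ encoding the iterated left and right actions. This amounts to a careful but routine check, in each bidegree, that strictness of $t_\#$ and conjugation by the isomorphism $t_\ast$ commute with the bimodule operations $\Upsilon_\lambda$ and $\Upsilon_\rho$.
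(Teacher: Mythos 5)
Your proposal is correct, and part (i) follows the paper's proof essentially verbatim: transfer the coalgebra structure of $S_{\ast}(X;R)$ to $H_{\ast}(X;R)$ using $R$-freeness, freely extend via Formula (\ref{multformula}) to the tensor algebra with all other operations zero, and read off the biconditional from the fact that the extended $\psi^{n}$ is determined by its restriction to generators. Where you diverge is part (ii). The paper's argument there is a single sentence: since $B=T^{a}\tilde{S}_{\ast}(X;R)$ is a free DG Hopf algebra chain model for $\Omega\Sigma X$, the structure $(H,\omega,\mu)$ transferred from $B$ \emph{is} the structure on $H_{\ast}(\Omega\Sigma X;R)$ up to the identification $t_{\ast}$, and the morphism $G$ built from Formula (\ref{fmultformula}) supplies the extension. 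You instead make this transport rigorous: you factor it through the strict DG bialgebra map $t_{\#}:T^{a}\tilde{S}_{\ast}(X;R)\to S_{\ast}(\Omega\Sigma X;R)$, conjugate the transfer data $(\alpha,\beta)$ by $t_{\ast}$ to exhibit $(t_{\ast})_{\ast}\omega_{1}$ as the structure transferred along the cycle-selecting map $\bar{g}_{2}=t_{\#}\circ g_{1}^{1}\circ t_{\ast}^{-1}$, and then invoke the uniqueness clause of Theorem \ref{transfer}(ii) (via the chain homotopy $\bar{g}_{2}\simeq g_{2}^{1}$, which holds because $H_{2}$ is free with zero differential) to compare with an arbitrary transferred structure on $H_{\ast}(\Omega\Sigma X;R)$. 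This buys a genuinely complete argument where the paper relies on the reader accepting that the induced structure on $H_{\ast}(\Omega\Sigma X;R)$ is by definition the one coming from the chosen chain model; the cost is the naturality check that conjugation by $(t_{\ast},t_{\#})$ respects the relative matrad structure, which, as you note, reduces to strictness of $t_{\#}$ and is routine. One small imprecision: multiplicativity of $g_{1}^{1}$ does not \emph{force} the higher products to vanish; rather, as in the proof of Theorem \ref{rational3}, it makes the zero choice of $\omega_{n}^{1}$ ($n\geq3$) and $g_{n}^{1}$ ($n\geq2$) admissible within the Transfer Algorithm. This does not affect the validity of your argument.
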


\begin{proof}
Since $H_{\ast}(X;R)$ is free as an $R$-module, we may choose a
cycle-selecting map $\bar{g}={\bar{g}}_{1}^{1}:H_{\ast}(X,R)\rightarrow
S_{\ast}(X;R)$ and apply the Transfer Algorithm to obtain an induced
$A_{\infty}$-coalgebra structure $\bar{\omega}=\{{\bar{\omega}}_{1}
^{n}\}_{n\geq2}$ on $H_{\ast}(X,R)$ and a corresponding map of $A_{\infty}
$-coalgebras $\bar{G}=\{{\bar{g}}_{1}^{n}\}_{n\geq1}:H_{\ast}(X;R)\Rightarrow
S_{\ast}(X;R).$ Let $H=T^{a}\tilde{H}_{\ast}(X;R),$ let $B=T^{a}\tilde
{S}_{\ast}(X;R),$ and consider the free (multiplicative) extension $g=T\left(
\bar{g}\right)  :H\rightarrow B.$ As in the proof of Theorem \ref{rational3},
use formulas \ref{multformula} and \ref{fmultformula} to freely extend
$\bar{\omega}$ and $\bar{G}$ to families $\omega=\left\{  \omega_{1}
^{n}\right\}  $ and $G=\{{g}_{1}^{n}\mid{g}_{1}^{1}=g\}_{n\geq1}$ defined on
$H$, and choose all other $A_{\infty}$-bialgebra operations to be zero. Then
$\bar{\omega}$ lifts to an $A_{\infty}$-bialgebra structure $\left(
H,\omega,\mu\right)  $ of the third kind with free product $\mu$, and $\bar
{G}$ lifts to a map $G:H\Rightarrow B$ of $A_{\infty}$-bialgebras.
Furthermore, restricting $\omega$ to the multiplicative generators $H_{\ast
}(X;R)$ recovers the $A_{\infty}$-coalgebra operations on $H_{\ast}(X;R)$.
Thus $A_{\infty}$ -bialgebra structure of $H$ is trivial if and only if the
$A_{\infty}$-coalgebra structure of $H_{\ast}(X;R)$ is trivial. Finally, since
$B$ is a free DG Hopf algebra chain model for $\Omega\Sigma X,$ the
Bott-Samelson Isomorphism $t_{\ast}$ extends to an isomorphism of $A_{\infty}
$-bialgebras, and identifies the $A_{\infty}$-bialgebra structure of $H_{\ast
}(\Omega\Sigma X;R)$ with $\left(  H,\omega,\mu\right)  $.
\end{proof}

It is important to note that prior to this work, all known \emph{rational}
homology invariants of $\Omega\Sigma X$ are trivial for any space $X$.
However, we now have the following:

\begin{corollary}
\label{invariant}A nontrivial $A_{\infty}$-coalgebra structure on $H_{\ast
}(X;\mathbb{Q})$ induces a nontrivial $A_{\infty}$-bialgebra structure on
$H_{\ast}(\Omega\Sigma X;\mathbb{Q}).$ Thus the $A_{\infty}$-bialgebra
structure of $H_{\ast}(\Omega\Sigma X;\mathbb{Q})$ is a nontrivial rational
homology invariant.
\end{corollary}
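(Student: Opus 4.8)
The plan is to derive the corollary directly from Theorem \ref{B-S} by specializing $R=\mathbb{Q}$. First I would observe that $\mathbb{Q}$ is a field, hence a PID, and that $H_{\ast}(X;\mathbb{Q})$ is a $\mathbb{Q}$-vector space, hence automatically torsion free. Thus the hypotheses of Theorem \ref{B-S} are satisfied for every connected $X$, and both parts of that theorem apply with no further work. This is the reason the corollary is stated over $\mathbb{Q}$ rather than over a general PID: the torsion-freeness condition becomes vacuous.

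Next I would extract nontriviality from Theorem \ref{B-S}(i). Assume the $A_{\infty}$-coalgebra structure $\{\psi^{n}\}_{n\geq2}$ on $H_{\ast}(X;\mathbb{Q})$ is nontrivial, so that some higher coproduct is nonzero on homology. By part (i), the free extension via Formula \ref{multformula} equips $T^{a}\tilde{H}_{\ast}(X;\mathbb{Q})$ with an $A_{\infty}$-bialgebra structure of the third kind that is trivial precisely when the coalgebra structure on $H_{\ast}(X;\mathbb{Q})$ is trivial; since the latter is nontrivial by hypothesis, so is the former. I would then transport this across the Bott--Samelson isomorphism: by Theorem \ref{B-S}(ii) the map $t_{\ast}$ extends to an isomorphism $\Phi$ of $A_{\infty}$-bialgebras, and since $\phi_{1}^{1}$ is an isomorphism of the underlying $\mathbb{Q}$-modules, the notion of a trivial higher operation is preserved. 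Hence the induced $A_{\infty}$-bialgebra structure on $H_{\ast}(\Omega\Sigma X;\mathbb{Q})$ is nontrivial, which is the first assertion.

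For the second assertion I would invoke uniqueness. By the uniqueness clause of Theorem \ref{AAHopf} (equivalently Theorem \ref{transfer}(ii)), the transferred $A_{\infty}$-bialgebra structure on $H_{\ast}(\Omega\Sigma X;\mathbb{Q})$ is well defined up to isomorphism, independently of the chain-level choices made by the Transfer Algorithm. Its isomorphism class is therefore a genuine homotopy invariant of $\Omega\Sigma X$, and the previous paragraph shows that this invariant can be nontrivial, unlike every previously known rational homology invariant of $\Omega\Sigma X$.

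The part requiring the most care — and the only real content beyond quoting Theorem \ref{B-S} — is twofold. First, one must be sure that \emph{nontriviality} is preserved under an isomorphism of $A_{\infty}$-bialgebras; this is legitimate here because both structures live on homology with vanishing internal differential, so the higher operations are honest invariants and cannot be gauged away by an isomorphism induced from the invertible map $\phi_{1}^{1}$. Second, the statement is non-vacuous only if a connected space $X$ with nontrivial rational $A_{\infty}$-coalgebra structure actually exists; such $X$ are exactly the rationally non-formal spaces, where a nonzero higher coproduct on $H_{\ast}(X;\mathbb{Q})$ is dual to a nonvanishing Massey product in $H^{\ast}(X;\mathbb{Q})$, and these are standard. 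I expect the write-up to be short, with essentially all the weight carried by Theorem \ref{B-S} and the uniqueness in Theorem \ref{AAHopf}.
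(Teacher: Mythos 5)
Your route to the first assertion is essentially the first half of the paper's proof: specialize Theorem \ref{B-S} to $R=\mathbb{Q}$, note torsion-freeness is automatic, and read off that the structure on $T^{a}\tilde{H}_{\ast}(X;\mathbb{Q})\approx H_{\ast}(\Omega\Sigma X;\mathbb{Q})$ is nontrivial exactly when the $A_{\infty}$-coalgebra structure on $H_{\ast}(X;\mathbb{Q})$ is. That part is fine. Where you differ from the paper is that the authors also invoke the dual version of Theorem \ref{rational3} to produce a \emph{second} induced structure on $H_{\ast}(\Omega\Sigma X;\mathbb{Q})$, this one with trivial $A_{\infty}$-\emph{coalgebra} substructure and with all the nontriviality pushed into the $A_{\infty}$-algebra substructure; the remark following the corollary observes that the two structures are isomorphic. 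This second step is not decoration: it is what makes the word ``invariant'' meaningful in the second assertion, since it shows that neither the $A_{\infty}$-algebra substructure nor the $A_{\infty}$-coalgebra substructure of $H_{\ast}(\Omega\Sigma X;\mathbb{Q})$ is by itself well defined up to isomorphism (each can be chosen trivial in a suitable representative), so the nontrivial invariant is genuinely the full bialgebra structure.

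The genuine gap is in the step you flag as ``requiring the most care.'' You assert that nontriviality passes across the isomorphism $\Phi$ extending $t_{\ast}$ because the operations live on homology with zero differential and therefore ``cannot be gauged away by an isomorphism induced from the invertible map $\phi_{1}^{1}$.'' That is false, and the paper itself supplies the counterexample: an isomorphism of $A_{\infty}$-bialgebras in this setting has higher components $\phi_{m}^{n}$ that can transfer all the higher structure from the coalgebra side to the algebra side (the Bott--Samelson structure has vanishing higher products, the dual-of-Theorem-\ref{rational3} structure has vanishing higher coproducts, and they are isomorphic). So individual higher operations are emphatically not preserved by such isomorphisms. The statement survives only if ``trivial'' is read as ``isomorphic to a structure with all higher operations zero,'' in which case nontriviality is tautologically an isomorphism invariant and the entire burden falls on the ``trivial if and only if'' clause of Theorem \ref{B-S}(i) (interpreted in that same sense), together with the uniqueness clause of Theorem \ref{AAHopf} to see that the isomorphism class does not depend on the choices in the Transfer Algorithm. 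With that reading your argument closes, but as written the justification is wrong and would mislead a reader into thinking the separate substructures are themselves invariants, which is exactly what the paper's second step is designed to refute.
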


\begin{proof}
First, $H=H_{\ast}(\Omega\Sigma X;\mathbb{Q})$ admits an induced $A_{\infty}
$-bialgebra structure of the third kind by Theorem \ref{B-S}, which is trivial
if and only if the $A_{\infty}$-coalgebra structure of $H_{\ast}
(X;\mathbb{Q})$ is trivial. Second, the dual version of Theorem
\ref{rational3} imposes an induced $A_{\infty}$-bialgebra structure on $H$
whose $A_{\infty}$-coalgebra substructure is trivial, and whose $A_{\infty}
$-algebra substructure is trivial if and only if the $A_{\infty}$-coalgebra
structure of $H_{\ast}(X;\mathbb{Q})$ is trivial.
\end{proof}

The two $A_{\infty}$-bialgebras identified in the proof of Corollary
\ref{invariant} -- one with trivial $A_{\infty}$-coalgebra substructure and
the other with trivial $A_{\infty}$-algebra substructure -- are in fact
isomorphic, and represent the same isomorphism class of $A_{\infty}$-bialgebra
structures on $H_{\ast}(\Omega\Sigma X;\mathbb{Q})$ (cf. Remark 1). Indeed,
choose a pair of isomorphisms for the two $A_{\infty}$-(co)algebra
substructures (their component in bidegree $(1,1)$ is $\mathbf{1}:H\rightarrow
H$). Since $\omega_{i}^{j}=0$ for $i,j\geq2,$ these isomorphisms clearly
determine an isomorphism of $A_{\infty}$-bialgebras.

Our next example exhibits an $A_{\infty}$-bialgebra of the first but not the
second kind. Given a $1\,$-connected DGA $\left(  A,d_{A}\right)  ,$ the bar
construction of $A$, denoted by $BA,$ is the cofree DGC $T^{c}\left(
\downarrow\overline{A}\right)  $ whose differential $d\ $and coproduct
$\Delta$ are defined as follows:\ Let $\left\lfloor x_{1}|\cdots
|x_{n}\right\rfloor $ denote the element $\left.  \downarrow x_{1}\mid
\cdots\mid\downarrow x_{n}\right.  \in BA$, and let $e$ denote the unit
$\left\lfloor \ \right\rfloor .$ Then
\[
d\left\lfloor x_{1}|\cdots|x_{n}\right\rfloor =\sum_{i=1}^{n}\pm\left\lfloor
x_{1}|\cdots|d_{A}x_{i}|\cdots|x_{n}\right\rfloor +\sum_{i=1}^{n-1}
\pm\left\lfloor x_{1}|\cdots|x_{i}x_{i+1}|\cdots|x_{n}\right\rfloor \text{;}
\]
\[
\Delta\left\lfloor x_{1}|\cdots|x_{n}\right\rfloor =e\otimes\left\lfloor
x_{1}|\cdots|x_{n}\right\rfloor +\left\lfloor x_{1}|\cdots|x_{n}\right\rfloor
\otimes e+\sum_{i=1}^{n-1}\left\lfloor x_{1}|\cdots|x_{i}\right\rfloor
\otimes\left\lfloor x_{i}|\cdots|x_{n}\right\rfloor .
\]
Given an $A_{\infty}$-coalgebra $\left(  C,\Delta^{n}\right)  _{n\geq1},$ the
tilde-cobar construction of $C$, denoted by $\tilde{\Omega}C,$ is the free DGA
$T^{a}\left(  \uparrow\overline{C}\right)  $ with differential $d_{\tilde
{\Omega}A}$ given by extending $\sum_{i\geq1}\Delta^{n}$ as a derivation. Let
$\left\lceil x_{1}|\cdots|x_{n}\right\rceil $ denote $\left.  \uparrow
x_{1}\mid\cdots\mid\uparrow x_{n}\right.  \in\tilde{\Omega}H.$

\begin{example}
\label{hga}Consider the DGA $A=\mathbb{Z}_{2}\left[  a,b\right]  /\left(
a^{4},ab\right)  $ with $\left\vert a\right\vert =3,$ $\left\vert b\right\vert
\!=\!5$ and trivial differential. Define a homotopy Gerstenhaber algebra $($HGA$)$
structure $\{E_{p,q}:A^{\otimes p}\otimes A^{\otimes q}\rightarrow
A\}_{p,q\geq0;\text{ }p+q>0}$ with $E_{p,q}$ acting trivially except
$E_{1,0}=E_{0,1}=\mathbf{1}$ and $E_{1,1}(b;b)=a^{3}$ $($cf. \!\cite{Voronov},\!
\cite{KS1}$)$. Form the tensor coalgebra $BA\otimes BA$ with coproduct
$\psi=\sigma_{2,2}\left(  \Delta\otimes\Delta\right)  ,$ and consider the
induced map
\[
\phi=E_{1,0}^{\prime}+E_{0,1}^{\prime}+E_{1,1}^{\prime}:BA\otimes
BA\rightarrow A
\]
of degree $+1,$ which acts trivially except for $E_{1,0}^{\prime}(\lfloor
x\rfloor\otimes e)=E_{0,1}^{\prime}(e\otimes\lfloor x\rfloor)=x$ for all $x\in
A,$ and $E_{1,1}^{\prime}(\lfloor b\rfloor\otimes\lfloor b\rfloor)=a^{3}.$
Since $E_{p,q}$ is an HGA structure, $\phi$ is a twisting cochain, which lifts
to a chain map of DG coalgebras $\mu:BA\otimes BA\rightarrow BA$ defined by
\[
\mu=\sum_{k=0}^{\infty}\downarrow^{\otimes k+1}\phi^{\otimes k+1}\bar{\psi
}^{\left(  k\right)  },
\]
where $\bar{\psi}^{(0)}=\mathbf{1,}$ $\bar{\psi}^{\left(  k\right)  }=\left(
\bar{\psi}\otimes\mathbf{1}^{\otimes k-1}\right)  \cdots\left(  \bar{\psi
}\otimes\mathbf{1}\right)  \bar{\psi}$ for $k>0,$ and $\bar{\psi}$ is the
reduced coproduct on $BA\otimes BA.$ Then, for example, $\mu\left(
\left\lfloor b\right\rfloor\!\otimes\! \left\lfloor b\right\rfloor \right)
=\left\lfloor a^{3}\right\rfloor $ and $\mu\left(  \left\lfloor b\right\rfloor
\! \otimes\! \left\lfloor a|b\right\rfloor \right)  =\left\lfloor a|a^{3}
\right\rfloor +\left\lfloor b|a|b\right\rfloor .$ It follows that
$(BA,d,\Delta,\mu)$ is a DG\ Hopf algebra. Let $\mu_{H}$ and $\Delta_{H}$ be
the product and coproduct on $H=H^{\ast}\left(  BA\right)  $ induced by $\mu$
and $\Delta;$ then $\left(  H,\Delta_{H},\mu_{H}\right)  $ is a graded
bialgebra. Let $\alpha={\operatorname*{cls}}\left\lfloor a\right\rfloor $ and
$z={\operatorname*{cls}}\left\lfloor a|a^{3}\right\rfloor $ in $H,$ and note
that $\left\lfloor a^{3}\right\rfloor =d\left\lfloor a|a^{2}\right\rfloor .$
Let $g:H\rightarrow BA$ be a cycle-selecting map such that $g\left(
\operatorname*{cls}\left\lfloor x_{1}|\cdots|x_{n}\right\rfloor \right)
=\left\lfloor x_{1}|\cdots|x_{n}\right\rfloor .$ Then
\[
\bar{\Delta}_{H}\left(  z\right)  ={\operatorname*{cls}\bar{\Delta}
}\left\lfloor a|a^{3}\right\rfloor ={\operatorname*{cls}}\left\{  \left\lfloor
a\right\rfloor \otimes\left\lfloor a^{3}\right\rfloor \right\}  =0
\]
so that
\[
\left\{  \Delta g+\left(  g\otimes g\right)  \Delta_{H}\right\}  \left(
z\right)  =\left\lfloor a\right\rfloor \otimes\left\lfloor a^{3}\right\rfloor
.
\]
By the Transfer Theorem, we may choose a map $g^{2}:H\rightarrow BA\otimes BA
$ such that $g^{2}\left(  z\right)  =\left\lfloor a\right\rfloor
\otimes\left\lfloor a|a^{2}\right\rfloor ;$ then
\[
\nabla g^{2}\left(  z\right)  =\left\{  \Delta g+\left(  g\otimes g\right)
\Delta_{H}\right\}  \left(  z\right)  .
\]
Furthermore, note that
\[
\left\{  \left(  g^{2}\otimes g+g\otimes g^{2}\right)  \Delta_{H}+\left(
\Delta\otimes\mathbf{1}+\mathbf{1}\otimes\Delta\right)  g^{2}\right\}  \left(
z\right)  =\left\lfloor a\right\rfloor \otimes\left\lfloor a\right\rfloor
\otimes\left\lfloor a^{2}\right\rfloor .
\]
Since $\left\lfloor a^{2}\right\rfloor =d\left\lfloor a|a\right\rfloor ,$
there is an $A_{\infty}$-coalgebra operation $\Delta_{H}^{3}:H\rightarrow
H^{\otimes3}$, and a map $g^{3}:H\rightarrow\left(  BA\right)  ^{\otimes3}$
satisfying the general relation on $J_{3}$ such that $\Delta_{H}^{3}(z)=0$ and
$g^{3}\left(  z\right)  =\left\lfloor a\right\rfloor \otimes\left\lfloor
a\right\rfloor \otimes\left\lfloor a|a\right\rfloor .$ In fact, we may choose
$\Delta_{H}^{3}$ to be identically zero on $H$ so that
\[
\nabla g^{3}=\left(  \Delta\otimes\mathbf{1}+\mathbf{1}\otimes\Delta\right)
g^{2}+\left(  g^{2}\otimes g+g\otimes g^{2}\right)  \Delta_{H}.
\]

Now the potentially non-vanishing terms in the image of $J_{4}$ are
\[
\left(  g^{3}\otimes g+g^{2}\otimes g^{2}+g\otimes g^{3}\right)  \Delta
_{H}+\left(  \Delta\otimes\mathbf{1}\otimes\mathbf{1}+\mathbf{1}\otimes
\Delta\otimes\mathbf{1}+\mathbf{1}\otimes\mathbf{1}\otimes\Delta\right)
g^{3},
\]
and evaluating at $z$ gives $\left\lfloor a\right\rfloor \otimes\left\lfloor
a\right\rfloor \otimes\left\lfloor a\right\rfloor \otimes\left\lfloor
a\right\rfloor .$ Thus there is an $A_{\infty}$-coalgebra operation
$\Delta_{H}^{4}$ and a map $g^{4}:H\rightarrow\left(  BA\right)  ^{\otimes4}$
satisfying the general relation on $J_{4}$ such that $\Delta_{H}^{4}\left(
z\right)  =\alpha\otimes\alpha\otimes\alpha\otimes\alpha$ and $g^{4}\left(
z\right)  =0.$ Now recall that the induced $A_{\infty}$-coalgebra structure on
$H\otimes H$ is given by
\begin{align*}
\Delta_{H\otimes H} &  =\sigma_{2,2}\left(  \Delta_{H}\otimes\Delta
_{H}\right)  \\
\Delta_{H\otimes H}^{4} &  =\sigma_{4,2}[\Delta_{H}^{4}\otimes(\mathbf{1}
^{\otimes2}\otimes\Delta_{H})(\mathbf{1}\otimes\Delta_{H})\Delta_{H}
+(\Delta_{H}\otimes\mathbf{1}^{\otimes2})(\Delta_{H}\otimes\mathbf{1}
)\Delta_{H}\otimes\Delta_{H}^{4}]\\
&  \vdots
\end{align*}
Let $\beta={\operatorname*{cls}}\left\lfloor b\right\rfloor ,$
$u={\operatorname*{cls}}\left\lfloor a|b\right\rfloor ,$
$v={\operatorname*{cls}}\left\lfloor b|a\right\rfloor ,$ and
$w={\operatorname*{cls}}\left\lfloor b|a|b\right\rfloor $ in $H,$ and consider
the induced map of tilde-cobar constructions
\[
\widetilde{\mu_{H}}=\sum_{n\geq1}\left(  \uparrow\mu_{H}\downarrow\right)
^{\otimes n}:\tilde{\Omega}(H\otimes H)\rightarrow\tilde{\Omega}H.
\]
Then
\[
\widetilde{\mu_{H}}\left\lceil \beta\otimes u\right\rceil =\left\lceil \mu
_{H}\left(  \beta\otimes u\right)  \right\rceil =\left\lceil
{\operatorname*{cls}}\text{ }\mu\left(  \left\lfloor b\right\rfloor
\otimes\left\lfloor a|b\right\rfloor \right)  \right\rceil =\left\lceil
z+w\right\rceil
\]
so that
\[
d_{\tilde{\Omega}H}\widetilde{\mu_{H}}\left\lceil \beta\otimes u\right\rceil
=d_{\tilde{\Omega}H}\left\lceil z+w\right\rceil =\left\lceil \alpha
|\alpha|\alpha|\alpha\right\rceil +\left\lceil \beta|u\right\rceil
+\left\lceil v|\beta\right\rceil .
\]
But on the other hand,
\begin{align*}
d_{\tilde{\Omega}(H\otimes H)}\left\lceil \beta\otimes u\right\rceil  &
=\left\lceil \Delta_{H\otimes H}\left(  \beta\otimes u\right)  \right\rceil \\
&  =\left\lceil e\otimes u\mid\beta\otimes e\right\rceil +\left\lceil
\beta\otimes e\mid e\otimes u\right\rceil \\
&  \hspace*{0.5in}+\left\lceil \beta\otimes\alpha\mid e\otimes\beta
\right\rceil +\left\lceil e\otimes\alpha\mid\beta\otimes\beta\right\rceil
\end{align*}
so that
\[
\widetilde{\mu_{H}}d_{\tilde{\Omega}(H\otimes H)}\left\lceil \beta\otimes
u\right\rceil =\left\lceil \beta|u\right\rceil +\left\lceil v|\beta
\right\rceil .
\]
Although $\widetilde{\mu_{H}}$ fails to be a chain map, the Transfer Theorem
implies there is a chain map $\widetilde{\mu_{H}}^{2}:\tilde{\Omega}(H\otimes
H)\rightarrow\tilde{\Omega}H$ such that $\widetilde{\mu_{H}}^{2}\left\lceil
e\otimes\alpha\mid\beta\otimes\beta\right\rceil =\left\lceil \alpha
|\alpha|\alpha|\alpha\right\rceil ,$ which can be realized by defining
\[
\widetilde{\mu_{H}}^{2}=\sum_{n\geq1}\left(  \uparrow\mu_{H}\downarrow
+\uparrow^{\otimes3}\omega_{2}^{3}\downarrow\right)  ^{\otimes n},
\]
where $\omega_{2}^{3}\left(  \beta\otimes\beta\right)  =\alpha\otimes
\alpha\otimes\alpha.$ Indeed, to see that the required equality holds, note
that $\mu_{H}\left(  \beta\otimes\beta\right)  =0$ since $\left\lfloor
a^{3}\right\rfloor =d\left\lfloor a|a^{2}\right\rfloor .$ Thus there is a map
$g_{2}:H\otimes H\rightarrow BA$ such that $g_{2}\left(  \beta\otimes
\beta\right)  =\left\lfloor a|a^{2}\right\rfloor $, and $\nabla g_{2}=g\mu
_{H}+\mu\left(  g\otimes g\right)  .$ Furthermore, there is the following
general relation on $JJ_{2,2}:$
\begin{align}
\nabla g_{2}^{2} &  =\omega_{BA}^{2,2}\left(  g\otimes g\right)  +\left(
\mu\otimes\mu\right)  \sigma_{2,2}\left(  \Delta g\otimes g^{2}+g^{2}
\otimes\left(  g\otimes g\right)  \Delta_{H}\right)  +g^{2}\mu_{H}
\label{three}\\
&  +\left(  \mu\left(  g\otimes g\right)  \otimes g_{2}+g_{2}\otimes g\mu
_{H}\right)  \sigma_{2,2}\left(  \Delta_{H}\otimes\Delta_{H}\right)  +\Delta
g_{2}+\left(  g\otimes g\right)  \omega_{2}^{2}.\nonumber
\end{align}
The first expression on the right-hand side vanishes since $BA$ has trivial
higher order structure, and the next two expressions vanish since $\mu
_{H}\left(  \beta\otimes\beta\right)  =0$ and $g^{2}(\beta)=0$ ($\beta$ is
primitive). However,
\[
\left\{  \left(  \mu\left(  g\otimes g\right)  \otimes g_{2}+g_{2}\otimes
g\mu_{H}\right)  \sigma_{2,2}\left(  \Delta_{H}\otimes\Delta_{H}\right)
+\Delta g_{2}\right\}  \left(  \beta\otimes\beta\right)
\]
\[
=\bar{\Delta}\,g_{2}(\beta\otimes\beta)=\left\lfloor a\right\rfloor
\otimes\left\lfloor a^{2}\right\rfloor .
\]
Since $d\left\lfloor a|a\right\rfloor =\left\lfloor a^{2}\right\rfloor ,$
there an operation $\omega_{2}^{2}:H^{\otimes2}\rightarrow H^{\otimes2}$, and a
map $g_{2}^{2}:H^{\otimes2}\rightarrow\left(  BA\right)  ^{\otimes2}$
satisfying relation (\ref{three}) such that $\omega_{2}^{2}\left(
\beta\otimes\beta\right)  =0$ and $g_{2}^{2}\left(  \beta\otimes\beta\right)
=\left\lfloor a\right\rfloor \otimes\left\lfloor a|a\right\rfloor $.
Similarly, there is an operation $\omega_{2}^{3}:H^{\otimes2}\rightarrow
H^{\otimes3}$, and a map $g_{2}^{3}:H^{\otimes2}\rightarrow\left(  BA\right)
^{\otimes3}$ satisfying the general relation on $JJ_{3,2}$ such that
$\omega_{2}^{3}\left(  \beta\otimes\beta\right)  =\alpha\!\otimes
\!\alpha\!\otimes\!\alpha$ and $g_{2}^{3}\left(  \beta\otimes\beta\right)
=0.$ Thus $\left(  H,\mu_{H},\Delta_{H},\,\Delta_{H}^{4},\omega_{2}
^{3},...\right)  $ is an $A_{\infty}$-bialgebra of the first kind.
\end{example}

One can think of the algebra $A$ in Example \ref{hga} as the singular
$\mathbb{Z}_{2}$-cohomology algebra of a space $X$ with the Steenrod algebra
$\mathcal{A}_{2}$ acting nontrivially via $Sq_{1}b=a^{3}$ (recall that
$Sq_{1}:H^{n}\left(  X;\mathbb{Z}_{2}\right)  \rightarrow H^{2n-1}\left(
X;\mathbb{Z}_{2}\right)  $ is a homomorphism defined by $Sq_{1}\left[
x\right]  =\left[  x\smile_{1}x\right]  $). Recall that a space $X$ is
$\mathbb{Z}_{2}$-\emph{formal} if there exists a DGA $B$ and cohomology
isomorphisms $C^{\ast}(X;\mathbb{Z}_{2})\leftarrow B\rightarrow H^{\ast
}\left(  X;\mathbb{Z}_{2}\right)  .$ Thus when $X$ is $\mathbb{Z}_{2}$-formal,
$H^{\ast}\left(  BA\right)  \approx H^{\ast}(\Omega X;\mathbb{Z}_{2})$ as
graded coalgebras. Now consider a $\mathbb{Z}_{2}$-formal space $X$ whose
cohomology $H^{\ast}\left(  X;\mathbb{Z}_{2}\right)  $ is generated
multiplicatively by $\left\{  a_{1},\ldots,a_{n+1},b\right\}  ,$ $n\geq2.$
Then Example \ref{hga} suggests the following conditions on $X,$ which if
satisfied, give rise to a nontrivial operation $\omega_{2}^{n}$ with $n\geq2,$
on the loop cohomology $H^{\ast}(\Omega X;\mathbb{Z}_{2})$:

\begin{enumerate}
\item $a_{1}b=0;$

\item $a_{1}\cdots a_{n+1}=0;$

\item $a_{i_{1}}\cdots a_{i_{k}}\neq0$ whenever $k\leq n$ and $i_{p}\neq
i_{q}$ for all $p\neq q;$

\item $Sq_{1}(b)=a_{2}\cdots a_{n+1}$.
\end{enumerate}

\noindent To see this, consider the non-zero classes $\alpha_{i}
=\operatorname*{cls}\left\lfloor a_{i}\right\rfloor ,$ $\beta
=\operatorname*{cls}\left\lfloor b\right\rfloor ,$ $u=\operatorname*{cls}
\left\lfloor a_{1}|b\right\rfloor ,$ $w=\operatorname*{cls}\left\lfloor
b|a_{1}|b\right\rfloor ,$ and $z=\operatorname*{cls}\left\lfloor a_{1}
|a_{2}\cdots a_{n+1}\right\rfloor $ in $H=H^{\ast}\left(  BA\right)  .$
Conditions (2) and (3) give rise to an induced $A_{\infty}$-coalgebra
structure $\left\{  \Delta_{H}^{k}:H\rightarrow H^{\otimes k}\right\}  $ such
that $\Delta_{H} ^{k}(z)=0$ for $3\leq k\leq n$, and $\Delta_{H}^{n+1}
(z)=\alpha_{1} \otimes\cdots\otimes\alpha_{n+1}$ with $g^{k}(z)=\left\lfloor
a_{1}\right\rfloor \otimes\cdots\otimes\left\lfloor a_{k-1}\right\rfloor
\otimes\left\lfloor a_{k}|a_{k+1}\cdots a_{n+1}\right\rfloor $ for $2\leq
k\leq n$ and $g^{n+1}(z)=0.$ Next, condition (4) implies $\beta\smile u=w+z,$
and we can define $\omega_{2}^{k}(\beta\otimes\beta)=0$ for $2\leq k<n$ and
$\omega_{2}^{n}(\beta\otimes\beta)=\alpha_{2}\otimes\cdots\otimes\alpha_{n+1}
$ with $g_{2}^{1}(\beta\otimes\beta)=\left\lfloor a_{2}|a_{3}\cdots
a_{n+1}\right\rfloor ,$ $g_{2}^{k}(\beta\otimes\beta)=\left\lfloor
a_{2}\right\rfloor \otimes\cdots\otimes\left\lfloor a_{k}\right\rfloor
\otimes\left\lfloor a_{k+1}\right.  \left.  | a_{k+2}\cdots a_{n+1}
\right\rfloor $ for $2\leq k\leq n-1$ and $g_{2}^{n}(\beta\otimes\beta)=0.$
Indeed, the Transfer Theorem implies the existence of an $A_{\infty}
$-bialgebra structure in which $\omega_{2}^{n}$ satisfies the required
structure relation on $JJ_{n,2}$. Note that the $\mathbb{Z}_{2} $-formality
assumption is in fact superfluous here, as it is sufficient for $\alpha_{i},$
$\beta,$ and $u$ to be non-zero.

Spaces $X$ with $\mathbb{Z}_{2}$-cohomology satisfying conditions (1)-(4) abound.

\begin{example}
\label{ex7.5}Given an integer $n\geq2,$ choose positive integers $r_{1}
,\ldots,r_{n+1}$ and $m\geq2$ such that $r_{2}+\cdots+r_{n+1}=4m-3.$ Consider
the \textquotedblleft thick bouquet\textquotedblright\ $S^{r_{1}}\veebar
\cdots\veebar S^{r_{n+1}},$ i.e., $S^{r_{1}}\times\cdots\times S^{r_{n+1}}$
with top dimensional cell removed, and generators $\bar{a}_{i}\in H^{r_{i}
}(S^{r_{i}};\mathbb{Z}_{2}).$ Also consider the suspension of complex
projective space $\Sigma\mathbb{C}P^{2m-2}$ with generators $\bar{b}\in
H^{2m-1}(\Sigma\mathbb{C}P^{2m-2};\mathbb{Z}_{2})$ and $Sq_{1}\bar{b}\in
H^{4m-3}(\Sigma\mathbb{C}P^{2m-2};\mathbb{Z}_{2})$. Let $Y_{n}=S^{r_{1}
}\veebar\cdots\veebar S^{r_{n+1}}\vee\Sigma\mathbb{C}P^{2m-2},$ and choose a
map $f:Y_{n}\rightarrow K(\mathbb{Z}_{2},4m-3)$ such that $f^{\ast}
(\iota_{4m-3})=\bar{a}_{2}\cdots\bar{a}_{n+1}+Sq_{1}\bar{b}. $ Finally,
consider the pullback $p:X_{n}\rightarrow Y_{n}$ of the following path
fibration:
\[
\begin{array}
[c]{ccccc}
K\left(  \mathbb{Z}_{2},4m-4\right)  & \longrightarrow & X_{n}\medskip &
\longrightarrow & \mathcal{L}K\left(  \mathbb{Z}_{2},4m-3\right) \\
&  & p\downarrow\medskip\text{ } &  & \downarrow\\
&  & Y_{n} & \overset{f}{\longrightarrow} & K\left(  \mathbb{Z}_{2}
,4m-3\right) \\
&  & \bar{a}_{2}\cdots\bar{a}_{n+1}+Sq_{1}\bar{b} & \underset{f^{\ast
}}{\longleftarrow} & \iota_{4m-3}
\end{array}
\]
Let $a_{i}=p^{\ast}\left(  \bar{a}_{i}\right)  $ and $b=p^{\ast}\left(
\bar{b}\right)  ;$ then $a_{1},\ldots,a_{n+1},b$ are multiplicative generators
of $H^{\ast}(X_{n};\mathbb{Z}_{2})$ satisfying conditions (1) - (4) above. We
remark that one can also obtain a space $X_{2}^{\prime}$ with a non-trivial
$\omega_{2}^{2}$ on its loop cohomology by setting $Y_{2}^{\prime}=\left(
S^{2}\times S^{3}\right)  \vee\Sigma\mathbb{C}P^{2}$ in the construction above
(see \cite{Umble2} for details).
\end{example}

Finally, we note that the cohomology of Eilenberg-MacLane spaces and Lie
groups fail to satisfy all of (1)-(4), and it would not be surprising to find
that the operations $\omega_{2}^{n}$ vanish in their loop cohomologies for all
$n\geq2.$ In the case of Eilenberg-MacLane spaces, recent work of Berciano and
the second author seems to support this conjecture. Indeed, each tensor factor
$A=E\left(  v,2n+1\right)  \otimes\Gamma(w,2np+2)\subset H_{\ast}\left(
K\left(  \mathbb{Z},n\right)  ;\mathbb{Z}_{p}\right)  ,$ $n\geq3, $ and $p$ an
odd prime, is an $A_{\infty}$-bialgebra of the third kind of the form $\left(
A,\Delta^{2},\Delta^{p},\mu\right)  $ (see \cite{BU}).

HGAs with nontrivial actions of the Steenrod algebra $\mathcal{A}_{2}$ were
first considered by the first author in \cite{Saneblidze2} and
\cite{Saneblidze3}. In general, the Steenrod $\smile_{1}$-cochain operation
(together with the other higher cochain operations) induces a nontrivial HGA
structure on $S^{\ast}\left(  X\right)  $. However, the failure of the
differential to be a $\smile_{1}$-derivation prevents an immediate lifting of
the HGA structure to cohomology. Nevertheless, such liftings are possible in
certain situations, as we have seen in Example \ref{hga}. Here is another such example.

\begin{example}
\label{hopfinv}Let $g:S^{2n-2}\rightarrow S^{n}$ be a map of spheres, and let
$Y_{m,n}=S^{m}\times\left(  e^{2n-1}\cup_{g}S^{n}\right)  .$ Let $\ast$ be the
wedge point of $S^{m}\vee S^{n}\subset Y_{m,n},$ let $f:S^{2m-1}\rightarrow
S^{m}\times\ast,$ and let $X_{m,n}=e^{2m}\cup_{f}Y_{m,n}.$ Then $X_{m,n}$ is
$\mathbb{Z}_{2}$-formal for each $m$ and $n$ (by a dimensional argument), and
we may consider $A=H^{\ast}(X_{m,n};\mathbb{Z}_{2})$ and $H=H^{\ast
}(BA)\approx H^{\ast}(\Omega X_{m,n};\mathbb{Z}_{2}).$ Below we prove that:

\begin{enumerate}
\item[\textit{(i)}] The $A_{\infty}$-coalgebra structure of $H$ is nontrivial
if and only if the Hopf invariant $h(f)=1,$ in which case $m=2,4,8 $.

\item[\textit{(ii)}] If $h\left(  f\right)  =1,$ the $A_{\infty}$-coalgebra
structure on $H$ extends to a nontrivial $A_{\infty}$-bialgebra structure on
$H.$ Furthermore, let $a\in A^{n}$ and $c\in A^{2n-1}$ be multiplicative
generators; then there is a perturbed multiplication $\varphi$ on $H$ if and
only if $Sq_{1}a=c,$ in which case $n=3,5,9$; otherwise $\varphi$ is induced
by the shuffle product on $BA$.
\end{enumerate}
\end{example}

\begin{proof}
Suppose $h\left(  f\right)  =1.$ Then $A$ is generated multiplicatively by
$a\in A^{n},$ $b\in A^{m},$ and $c\in A^{2n-1}$ subject to the relations
\[
a^{2}=c^{2}=ac=ab^{2}=b^{2}c=b^{3}=0.
\]
Let $\alpha=\operatorname*{cls}\left\lfloor a\right\rfloor ,$ $\beta
=\operatorname*{cls}\left\lfloor b\right\rfloor ,$ $\gamma=\operatorname*{cls}
\left\lfloor c\right\rfloor ,$ and $z=\operatorname*{cls}\left\lfloor
b^{2}\right\rfloor \in H=H^{\ast}\left(  BA\right)  .$ Given $x_{i}
=\operatorname*{cls}\left\lfloor u_{i}\right\rfloor \in H$ with $u_{i}
u_{i+1}=0,$ let $x_{1}|\cdots|x_{n}=\operatorname*{cls}\left\lfloor
u_{1}|\cdots|u_{n}\right\rfloor .$ Note that $x=\alpha|z=z|\alpha$ and
$y=\gamma|z=z|\gamma.$ Let $\Delta_{H}\ $denote the coproduct in $H$ induced
by the cofree coproduct $\Delta$ in $BA.$ Then $x$ and $y$ are primitive, and
$\Delta_{H}\left(  \alpha|z|\alpha\right)  =e\otimes\alpha|z|\alpha
+x\otimes\alpha+\alpha\otimes x+\alpha|z|\alpha\otimes e.$ Define $g\left(
x\right)  =\left\lfloor a|b^{2}\right\rfloor $ and $g^{2}\left(  x\right)
=\left\lfloor a\right\rfloor \otimes\left\lfloor b|b\right\rfloor ;$ define
$g\left(  \alpha|z|\alpha\right)  =\left\lfloor a|b^{2}|a\right\rfloor $ and
$g^{2}\left(  \alpha|z|\alpha\right)  =\left\lfloor a\right\rfloor
\otimes\left(  \left\lfloor a|b|b\right\rfloor +\left\lfloor
b|a|b\right\rfloor +\left\lfloor b|b|a\right\rfloor \right)  .$ There is an
induced $A_{\infty}$-coalgebra operation $\Delta_{H}^{3}:H\rightarrow
H^{\otimes3},$ which vanishes except on elements of the form $\cdots
|z|\cdots,$ and may be defined on the elements $x,$ $y,$ and $\alpha|z|\alpha$
by
\[
\begin{tabular}
[c]{l}
$\Delta_{H}^{3}(x)=\alpha\otimes\beta\otimes\beta,$\\
$\Delta_{H}^{3}(y)=\gamma\otimes\beta\otimes\beta,$ and\\
$\Delta_{H}^{3}(\alpha|z|\alpha)=\alpha\otimes(\alpha|\beta+\beta
|\alpha)\otimes\beta+\alpha\otimes\beta\otimes(\alpha|\beta+\beta|\alpha).$
\end{tabular}
\ \ \ \
\]
Then $\left\{  \Delta_{H},\Delta_{H}^{3}\right\}  $ defines an $A_{\infty}
$-coalgebra structure on $H$. Furthermore, if $Sq_{1}a=c,$ which can only
occur when $n=3,5,9,$ the induced HGA structure on $A$ is determined by
$Sq_{1}$, and induces a perturbation of the shuffle product $\mu:BA\otimes
BA\rightarrow BA$ with $\mu\left(  \left\lfloor a\right\rfloor \otimes
\left\lfloor a\right\rfloor \right)  =\left\lfloor c\right\rfloor .$ The
product $\mu$ lifts to a perturbed product $\varphi$ on $H$ such that
\[
\varphi(\alpha\otimes\alpha|z)=\alpha|z|\alpha+\gamma|z,
\]
and the $A_{\infty}$-coalgebra structure $\left(  H,\Delta_{H},\Delta_{H}
^{3}\right)  $ extends to an $A_{\infty}$-bialgebra structure $\left(
H,\Delta_{H},\Delta_{H}^{3},\varphi\right)  $ as in Example \ref{hga}. On the
other hand, if $Sq_{1}a=0,$ then is induced by the shuffle product on $BA$ and
$\varphi(\alpha\otimes\alpha|z)=\alpha|z|\alpha.$ Conversely, if $h(f)=0,$
then $b^{2}=0$ so that $\Delta_{H}^{k}=0,\ $for all $k\geq3.$
\end{proof}

We conclude with an investigation of the $A_{\infty}$-bialgebra structure on
the double cobar construction. To this end, we first prove a more general
fact, which follows our next definition:

\begin{definition}
Let $\left(  A,d,\psi,\varphi\right)  $ be a free DG\ bialgebra, i.e., free as
a DGA. An \textbf{acyclic cover of }$A$ is a collection of acyclic DG
submodules
\[
\mathcal{C}\left(  A\right)  =\left\{  C^{a}\subseteq A\mid a\text{ is a
monomial of }A\right\}
\]
such that $\psi\left(  C^{a}\right)  \subseteq C^{a}\otimes C^{a}$ \textit{and
}$\varphi\left(  C^{a}\otimes C^{b}\right)  \subseteq C^{ab}.$
\end{definition}

\begin{proposition}
\label{prop}Let $\left(  A,d,\psi,\varphi\right)  $ be a free DG\ bialgebra
with acyclic cover $\mathcal{C}\left(  A\right)  .$

\begin{enumerate}
\item[\textit{(i)}] Then $\varphi\ $and $\psi$ extend to an $A_{\infty}
$-bialgebra structure of the third kind.

\item[\textit{(ii)}] Let $\left(  A^{\prime},d^{\prime},\psi^{\prime}
,\varphi^{\prime}\right)  $ be a free DG\ bialgebra with acyclic cover
$\mathcal{C}\left(  A^{\prime}\right)  ,$ and let $f:A\rightarrow A^{\prime}$
be a DGA\ map such that $f(C^{a})\subseteq C^{f\left(  a\right)  }$ for all
$C^{a}\in\mathcal{C}\left(  A\right)  .$ Then $f $ extends to a morphism of
$A_{\infty}$-bialgebras.
\end{enumerate}
\end{proposition}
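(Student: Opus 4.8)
The plan is to realize both parts of Proposition \ref{prop} as direct applications of the Transfer Algorithm developed in the proof of Theorem \ref{transfer}, using the acyclic cover to force all the relevant cochains to lie in acyclic subcomplexes where the required $\nabla$-primitives automatically exist. First I would set up the dual (cobar-type) side: since $\left(A,d,\psi,\varphi\right)$ is free as a DGA, the coproduct $\psi$ and product $\varphi$ already give $A$ the structure of a DG bialgebra, and I want to produce the higher operations $\omega_m^n$ of an $A_\infty$-bialgebra of the third kind, meaning $\omega_m^n=0$ for $m,n\geq 2$ with all higher structure concentrated in the $A_\infty$-algebra and $A_\infty$-coalgebra substructures. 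Because $A$ is free as a DGA, the product $\varphi=\omega_2^1$ can be kept strictly associative (so $\omega_m^1=0$ for $m\geq 3$); the content is therefore the $A_\infty$-coalgebra operations $\{\psi^n=\omega_1^n\}_{n\geq 2}$ extending $\psi=\psi^2$, together with their compatibility with $\varphi$ via Formula (\ref{multformula}).

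The key step is to run the inductive construction of the $\psi^n$ exactly as in Theorem \ref{transfer}, but to observe that the acyclic cover makes each obstruction cocycle vanish \emph{through an explicit primitive} rather than merely up to homology. Concretely, for each monomial $a$ of $A$, the submodule $C^a\subseteq A$ is acyclic and closed under $\psi$ (i.e. $\psi(C^a)\subseteq C^a\otimes C^a$), so $C^a$ carries its own DG coalgebra structure. I would build the operations $\psi^n$ monomial-by-monomial: on each $C^a$ the relevant obstruction cochain is a $\nabla$-cocycle landing in $(C^a)^{\otimes n}$, which is acyclic (a tensor product of acyclic complexes over a field-like ground, or more carefully a tensor of free acyclic $R$-modules), hence it bounds, and I choose $\psi^n$ to restrict to the resulting primitive on generators lying in $C^a$. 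The multiplicative compatibility $\varphi(C^a\otimes C^b)\subseteq C^{ab}$ guarantees that extending $\psi^n$ to products via Formula (\ref{multformula}) stays inside the cover and is consistent across the monomial decomposition; this is precisely what lets me set $\omega_m^n=0$ for $m,n\geq 2$ while satisfying the structure relations $\{\psi^n\mu=\mu^{\otimes n}\Psi^n\}_{n\geq 2}$, proving part (i).

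For part (ii) the plan is to apply the morphism half of the Transfer Algorithm. Given the DGA map $f:A\rightarrow A'$ with $f(C^a)\subseteq C^{f(a)}$, I want components $\{g^n\}_{n\geq 1}$ with $g^1=f$ extending $f$ to an $A_\infty$-bialgebra morphism of the third kind, satisfying Formula (\ref{fmultformula}). The induction again produces, at stage $n$, an obstruction cochain measuring the failure of $f$ to commute with $\psi^n$ versus $\psi'^{\,n}$; because $f$ carries $C^a$ into $C^{f(a)}$ and both covers are acyclic, each such cochain lands in $\mathrm{Hom}$ of an acyclic target and therefore admits a $\nabla$-primitive $g^n$. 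Freeness of $A$ as a DGA lets me take $f$ multiplicative and extend each $g^n$ multiplicatively over products through Formula (\ref{fmultformula}), keeping $g_m^n=0$ for $m\geq 2$ consistent with the third-kind hypothesis. The main obstacle I anticipate is bookkeeping rather than conceptual: verifying that the monomial-wise choices of primitives patch together into a single well-defined operation on all of $A$ (respial the overlaps $C^a\cap C^b$ and the behavior on decomposables), and confirming that the cover conditions $\psi(C^a)\subseteq C^a\otimes C^a$ and $\varphi(C^a\otimes C^b)\subseteq C^{ab}$ are exactly strong enough to make the free extensions via (\ref{multformula}) and (\ref{fmultformula}) land back in the cover at every inductive stage. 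Once this compatibility is checked, both parts follow formally from the existence clause of Theorem \ref{transfer}.
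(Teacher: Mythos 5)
Your proposal is correct and follows essentially the same route as the paper: the published proof likewise constructs each $\psi^{n}$ (and each $g^{n}$) inductively on multiplicative generators, where the acyclicity of $\left(C^{a}\right)^{\otimes n}$ supplies the required $\nabla$-primitives for the obstruction cocycles, and then extends to decomposables via Formulas (\ref{multformula}) and (\ref{fmultformula}), the cover conditions $\psi(C^{a})\subseteq C^{a}\otimes C^{a}$ and $\varphi(C^{a}\otimes C^{b})\subseteq C^{ab}$ guaranteeing that the extension stays inside $\mathcal{C}(A)$ so the induction can continue. The one point to keep straight is that part (i) is not literally an instance of Theorem \ref{transfer} (there is no homology isomorphism $g$ present); it is the same obstruction-theoretic induction, but the obstructions bound because the local targets are acyclic rather than because $\tilde{g}$ is a cohomology isomorphism --- which is exactly the mechanism you in fact describe.
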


\begin{proof}
Define an $A_{\infty}$-coalgebra structure as follows: Let $\psi^{2}=\psi;$
arbitrarily define $\psi^{3}$ on multiplicative generators, and extend
$\psi^{3}$ to decomposables via
\[
\psi^{3}\mu_{A}=\mu_{A}^{\otimes3}\sigma_{3,2}\left(  \psi^{3}\otimes\psi
^{3}\right)  .
\]
Inductively, if $\left\{  \psi^{i}\right\}  _{i<n}$ have been constructed,
arbitrarily define $\psi^{n}$ on multiplicative generators, and use
(\ref{multformula}) to extend $\psi^{n}$ to decomposables. Since each
$\psi^{n}$ preserves $\mathcal{C}\left(  A\right)  $ by hypothesis, $\left\{
\varphi,\psi^{2},\psi^{3},\ldots\right\}  $ is an $A_{\infty}$-bialgebra
structure of the third kind, as desired. The proof of part (ii) is similar.
\end{proof}

Given a space $X,$ choose a base point $y\in X.$ Let $\operatorname{Sing}
^{2}X$ denote the Eilenberg 2-subcomplex of $\operatorname{Sing}X$, and let
$C_{\ast}(X)=C_{\ast}(\operatorname{Sing}^{2}X)/C_{>0}(\operatorname{Sing}
\,y).$ In \cite{SU2} we constructed an explicit (non-coassociative) coproduct
on the double cobar construction $\Omega^{2}C_{\ast}(X),$ which turns it into
a DG bialgebra. Let $\mathbf{\Omega}{^{2}}$ denote the functor from the
category of (2-reduced) simplicial sets to the category of permutahedral sets
(\cite{SU2}, \cite{KS2}) such that $\Omega^{2}C_{\ast}(X)=C_{\ast
}^{\diamondsuit}(\mathbf{\Omega}{^{2}}\operatorname{Sing}^{2}X),$ where
$C_{\ast}^{\diamondsuit}\left(  Y\right)  \!=\!C_{\ast}\!\left(
\operatorname{Sing}^{M}Y\right)  \!/\!\left\langle \text{degeneracies}
\right\rangle \!,$ where $\operatorname{Sing}^{M}Y$ is the multipermutahedral
singular complex of $Y$ (see Definition 15 in \cite{SU2}; cf. \cite{Baues1},
\cite{CM}). Now consider the monoidal permutahedral set ${\mathbf{\Omega}^{2}
}\operatorname{Sing}^{2}X,$ and let $V_{\ast}$ be its monoidal
(non-degenerate) generators. For each $a\in V_{n},$ let
\[
C^{a}=R\left\langle r\text{-faces of}\text{ }a\mid0\leq r\leq n\right\rangle .
\]
Then $\left\{  C^{a}\right\}  $ is an acyclic cover, and by Proposition
\ref{prop} we immediately have:

\begin{theorem}
\label{double}The DG bialgebra structure on the double cobar construction
$\Omega^{2}C_{\ast}(X)$ extends to an $A_{\infty}$-bialgebra of the third kind.
\end{theorem}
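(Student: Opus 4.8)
The plan is to verify the hypotheses of Proposition \ref{prop}(i) for $A=\Omega^{2}C_{\ast}(X)$ and then invoke it directly; the entire content of the argument lies in exhibiting the acyclic cover $\{C^{a}\}$. First I would record that $\Omega^{2}C_{\ast}(X)$ is free as a DGA: the cobar construction $\Omega$ produces a free (tensor) algebra, so the iterated construction is again free, and by \cite{SU2} it carries the explicit (non-coassociative) coproduct $\psi$ together with the concatenation product $\varphi$, making $(A,d,\psi,\varphi)$ a free DG bialgebra. Thus the only thing left is to check that the submodules
\[
C^{a}=R\left\langle r\text{-faces of }a\mid0\leq r\leq n\right\rangle,\quad a\in V_{n},
\]
constitute an acyclic cover in the sense of the definition preceding the proposition.

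Then I would verify the three required properties in turn. \emph{Acyclicity}: for a single monoidal generator $a\in V_{n}$, the faces of all dimensions $0\le r\le n$ form the face poset of a permutahedral cell, which is a contractible polytope, so its cellular chain complex $C^{a}$ is acyclic. \emph{Compatibility with }$\varphi$: since $\varphi$ is concatenation in the free algebra and ${\mathbf{\Omega}^{2}}\operatorname{Sing}^{2}X$ is monoidal, the product of an $r$-face of $a$ with an $s$-face of $b$ is a face of the concatenated generator $ab$, whence $\varphi(C^{a}\otimes C^{b})\subseteq C^{ab}$. \emph{Compatibility with }$\psi$: the coproduct of \cite{SU2} is cellular and governed by the S-U diagonal $\Delta_{P}$ on permutahedra, which sends each cell to a sum of tensor products of pairs of its own faces; applied to any face of $a$ it therefore returns elements of $C^{a}\otimes C^{a}$, giving $\psi(C^{a})\subseteq C^{a}\otimes C^{a}$.

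With $\{C^{a}\}$ established as an acyclic cover, Proposition \ref{prop}(i) extends $\varphi$ and $\psi$ to an $A_{\infty}$-bialgebra structure of the third kind on $\Omega^{2}C_{\ast}(X)$, which is precisely the assertion of Theorem \ref{double}. I expect the main obstacle to be the verification that $\psi$ preserves each $C^{a}\otimes C^{a}$: unlike the product, which is formally monoidal, the coproduct is the intricate non-coassociative map of \cite{SU2}, and confirming that it never escapes the face complex of a single generator is the one step that genuinely uses the combinatorial description of $\Delta_{P}$ rather than general nonsense. Freeness and acyclicity are essentially immediate, and the product compatibility follows from the monoidal structure, so the whole weight of the theorem rests on this cellular property of the coproduct.
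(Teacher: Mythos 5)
Your proposal is correct and follows exactly the paper's route: the paper also defines $C^{a}$ as the span of the faces of each monoidal generator $a$ of the permutahedral set $\mathbf{\Omega}^{2}\operatorname{Sing}^{2}X$, asserts that $\{C^{a}\}$ is an acyclic cover, and invokes Proposition \ref{prop}(i). The only difference is that the paper leaves the three verification steps (acyclicity, compatibility with $\varphi$, compatibility with $\psi$) implicit, whereas you spell them out; your identification of the cellularity of the coproduct as the step carrying the real content is apt.
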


\begin{conjecture}
Given a $2$-connected space $X,$ the chain complex $C_{\ast}^{\diamondsuit
}(\Omega^{2}X)$ admits an $A_{\infty}$-bialgebra structure extending the DG
bialgebra structure constructed in \cite{SU2}. Moreover, there exists a
morphism of $A_{\infty}$-bialgebras
\[
G=\left\{  g_{m}^{n}\right\}  :\Omega^{2}C_{\ast}(X)\Rightarrow C_{\ast
}^{\diamondsuit}(\Omega^{2}X)
\]
such that $g_{1}^{1}$ is a homology isomorphism.\vspace{0.2in}
\end{conjecture}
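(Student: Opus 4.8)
The plan is to split the conjecture into its two assertions and to dispatch the morphism statement by the transfer machinery already established, thereby isolating all of the genuine difficulty in the first assertion. For the morphism, note that the source $\Omega^{2}C_{\ast}(X)=C_{\ast}^{\diamondsuit}(\mathbf{\Omega}^{2}\operatorname{Sing}^{2}X)$ is a free tensor algebra and in particular free as an $R$-module, while the standard cobar equivalence $g_{1}^{1}:\Omega^{2}C_{\ast}(X)\to C_{\ast}^{\diamondsuit}(\Omega^{2}X)$ of \cite{SU2} is a DG bialgebra map and a homology isomorphism. Therefore, \emph{granting} the first assertion that $C_{\ast}^{\diamondsuit}(\Omega^{2}X)$ carries an $A_{\infty}$-bialgebra structure extending its \cite{SU2} DG bialgebra structure, Corollary \ref{m-bialgebra} applies verbatim with $A=\Omega^{2}C_{\ast}(X)$ and $B=C_{\ast}^{\diamondsuit}(\Omega^{2}X)$: since $A$ is free, $\tilde g$ is automatically a cohomology isomorphism by Proposition \ref{homology-iso}(i), and the conclusion produces the desired $G=\{g_{m}^{n}\mid g_{1}^{1}=g\}:\Omega^{2}C_{\ast}(X)\Rightarrow C_{\ast}^{\diamondsuit}(\Omega^{2}X)$. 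The resulting pulled-back structure on the source extends its DG bialgebra structure and, by the uniqueness clause of Theorem \ref{transfer}, is isomorphic to the structure of Theorem \ref{double}. Thus the whole problem reduces to constructing the $A_{\infty}$-bialgebra structure on the target.

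For that first assertion I would first attempt the acyclic-cover route of Proposition \ref{prop}, modeling $\Omega^{2}X$ by a monoidal permutahedral set and taking the cover $C^{w}=R\langle r\text{-faces of }w\mid 0\leq r\leq\dim w\rangle$ indexed by the monoidal non-degenerate generators $w$ of $\operatorname{Sing}^{M}(\Omega^{2}X)$. Each $C^{w}$ is acyclic, and since the \cite{SU2} coproduct and loop concatenation are cellular one expects $\psi(C^{w})\subseteq C^{w}\otimes C^{w}$ and $\varphi(C^{a}\otimes C^{b})\subseteq C^{ab}$. The obstruction is that, unlike the double cobar construction, $C_{\ast}^{\diamondsuit}(\Omega^{2}X)$ is the chain algebra of a grouplike monoid and is \emph{not} free as a DGA; Proposition \ref{prop} therefore does not apply, and the higher operations cannot be chosen freely on multiplicative generators. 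The fallback plan is to build the structure directly as a map of matrads $C_{\ast}(KK)\to U_{C_{\ast}^{\diamondsuit}(\Omega^{2}X)}$, realizing each $\theta_{m}^{n}$ by an explicit cellular operation assembled from the S-U diagonal $\Delta_{P}$ on the permutahedra underlying the multipermutahedral cells, exactly as $\Delta_{P}$ was used to subdivide cells of $KK$ and $JJ$ in Section 4; the operations in bidegrees $(1,2)$ and $(2,1)$ are the \cite{SU2} data, and the higher $\omega_{m}^{n}$ are obtained by propagating $\Delta_{P}$ through the monoidal filtration of $\operatorname{Sing}^{M}(\Omega^{2}X)$.

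The main obstacle is precisely this geometric realization on a non-free DG bialgebra: one must verify that the cellular operations so defined satisfy $\omega\circledcirc\omega=0$, i.e., that they assemble into a genuine map of matrads extending the non-coassociative \cite{SU2} coproduct. Controlling the failure of coassociativity at every higher stage requires that the relevant obstruction cochains be coboundaries in $Hom^{\ast}\bigl((C_{\ast}^{\diamondsuit})^{\otimes m},(C_{\ast}^{\diamondsuit})^{\otimes n}\bigr)$, which is not automatic, since the splitting hypothesis of Proposition \ref{homology-iso}(ii) is unavailable for the geometric chains. I would therefore try to import the needed acyclicity from the source through $g_{1}^{1}$: because $g_{1}^{1}$ is a homology isomorphism and the source is free, the Transfer Algorithm of Theorem \ref{transfer} supplies coherent higher operations on $\Omega^{2}C_{\ast}(X)$, and the program is to \emph{push} these forward along a chosen chain-homotopy section of $g_{1}^{1}$, verifying inductively that each transported $\omega_{m}^{n}$ preserves the multipermutahedral cover and hence descends to an operation on $C_{\ast}^{\diamondsuit}(\Omega^{2}X)$. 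Establishing that this forward transport is well defined and matrad-associative—equivalently, that the geometric coproduct of \cite{SU2} admits coherent cellularly realized higher homotopies—is the crux that keeps the statement a conjecture rather than a theorem.
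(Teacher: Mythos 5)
This statement is the paper's closing \emph{conjecture}: the authors give no proof of it, so there is no argument of theirs to compare yours against, and your proposal --- as you concede in its last sentence --- is a research program rather than a proof. Your diagnosis of where the difficulty sits is accurate and consistent with why the authors stop at Theorem \ref{double}: $C_{\ast}^{\diamondsuit}(\Omega^{2}X)$ is the chain algebra of a grouplike monoid, hence not free as a DGA, so Proposition \ref{prop} does not apply and the higher operations cannot be chosen freely on multiplicative generators; nor is the splitting hypothesis of Proposition \ref{homology-iso}(ii) available to run the obstruction theory on the target side. That part of your write-up correctly isolates the open problem.

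There is, however, a genuine gap in the one step you do claim to settle, namely the reduction of the ``moreover'' clause to the first assertion. Granting an $A_{\infty}$-bialgebra structure on $B=C_{\ast}^{\diamondsuit}(\Omega^{2}X)$, Corollary \ref{m-bialgebra} applied to $A=\Omega^{2}C_{\ast}(X)$ produces a \emph{pulled-back} structure $\omega_{A}$ and a morphism $G:(A,\omega_{A})\Rightarrow B$; but the conjecture asks for a morphism out of $\Omega^{2}C_{\ast}(X)$ carrying the structure of Theorem \ref{double}, which extends the \cite{SU2} DG bialgebra structure. Your appeal to the uniqueness clause to identify the two structures is invalid: Theorem \ref{transfer}(ii) compares only structures \emph{both} produced by the Transfer Algorithm from chain homotopic initializing maps $g\simeq\bar{g}$, whereas the Theorem \ref{double} structure arises from the acyclic-cover construction of Proposition \ref{prop}, with higher coproducts chosen arbitrarily on multiplicative generators and extended freely via Formula \ref{multformula}; nothing in the paper places it in the isomorphism class of a transferred structure. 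Indeed the Transfer Algorithm does not even guarantee that $\omega_{A}^{1,2}$ and $\omega_{A}^{2,1}$ \emph{equal} the given coproduct and product on the double cobar construction --- they are only required to represent the correct cohomology classes --- so the claim that the pulled-back structure ``extends its DG bialgebra structure'' needs an argument (e.g., that $g_{1}^{1}$ is a map of DG bialgebras and that the algorithm's choices can be made compatibly), which you do not supply. Finally, the chain-level homology isomorphism $g_{1}^{1}$ you call standard is not established in this paper (it would rest on the preprint \cite{KS2}); its existence with the stated compatibility is itself part of what the conjecture asserts, so it cannot be taken as given input to the transfer.
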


\end{document}